\newcommand{\R}{\mathbb R}
\newcommand{\F}{\mathbb F}
\renewcommand{\P}{\mathbb P}
\renewcommand{\S}{\mathbb S}
\DeclareMathOperator{\GL}{GL}
\DeclareMathOperator{\Sym}{Sym}
\DeclareMathOperator{\Aut}{Aut}
\DeclareMathOperator{\Gal}{Gal}
\DeclareMathOperator{\Tr}{Tr}
\DeclareMathOperator{\diag}{diag}
\newtheorem{theorem}{Theorem}[section]
\newtheorem{notation}[theorem]{Notation}
\newtheorem{definition}[theorem]{Definition}
\newtheorem{lemma}[theorem]{Lemma}
\newtheorem{corollary}[theorem]{Corollary}
\newtheorem{proposition}[theorem]{Proposition}
\newtheorem{remark}[theorem]{Remark}
\newtheorem{construction}[theorem]{Construction}
\newtheorem{problem}[theorem]{Open Problem}
\DeclareMathOperator{\id}{id}
\DeclareMathOperator{\End}{End}
\renewcommand*{\backref}[1]{}
\renewcommand*{\backrefalt}[4]{%
	\ifcase #1 (Not cited.)%
	\or        p.~#2.%
	\else      pp.~#2.%
	\fi}
\begin{document}
\title{A unifying construction of semifields of order $p^{2m}$}
\author{Lukas K\"olsch}
\date{\today}
\maketitle
\abstract{In this article, we present two new constructions for semifields of order $p^{2m}$. Together, the constructions unify and generalize  around a dozen distinct semifield constructions, including both the oldest known construction by Dickson and the largest known constructions in odd characteristic by Taniguchi. The constructions also provably yield many new semifields. We give precise conditions when the new semifields we find are equivalent and count precisely how many new inequivalent semifields we construct.  }
\section{Introduction}
\subsection{Overview}
A \emph{finite semifield} $\S$ is a finite division algebra in which multiplication is not assumed to be associative. Both left- and right-multiplication with a fixed element of $\S$ define an invertible linear mapping, i.e., if $\circ$ denotes the semifield multiplication then the equations $a \circ x = b$ and $x \circ a = b$ have a unique solution $x \in \S$ for any $a,b \in \S$. The first non-trivial example of a finite semifield (i.e. a semifield that is not a field) was found by Dickson in 1906~\cite{dickson1906commutative}. 

Finite semifields have received much attention since, in particular in finite geometry where semifields can be used to define projective planes, early constructions and connections were for instance developed by Knuth~\cite{Knuth65} and Albert~\cite{Albert60} in the 1960s, we also refer to some more recent results establishing further connections~\cite{blokhuis2003classification,KW}. Recently, it has been observed that semifields can be used to construct linear rank-metric codes with optimal parameters, see e.g.~\cite{sheekey} for details. Further connections to combinatorial objects like difference sets and bent functions have also been noted, we refer to the survey~\cite{pott2014semifields}.

Since Dickson's original construction, many different constructions have been proposed using a variety of tools, ranging from planar functions~\cite{golouglu2023exponential,ZKW}, skew-polynomial rings~\cite{lavrauw2013semifields,sheekey2020new}, to  spreads~\cite{Kantor03}, to name a few. Naturally, it is quite difficult to determine if these constructions are actually distinct or lead to \emph{equivalent} objects--- the precise definition of equivalence we use will be described later. Curiously, many ostensibly very different constructions only yield semifields in even dimension over the prime field, i.e. semifields of size $p^{2m}$. \\

In this paper, we will develop two general construction methods that yield semifields of size $p^{2m}$ for any prime $p$ and any natural number $m$. The contribution of these new constructions to the theory of semifields is two-fold: Firstly, the constructions provide a general and unifying framework that explains most known infinite families of semifields of order $p^{2m}$. This is in particular surprising as the constructions were originally found using very different ideas and tools, often using ad-hoc arguments and extrapolations from brute force searches, aided by computers. Our findings show that all these families of semifields are actually closely related. In this way, we cover the bivariate (twisted) cyclic fields~\cite{jha1989analog,sheekey2020new}, the (generalized) Dickson semifields~\cite{dickson1906commutative,Knuth65}, Knuth's semifields quadratic over a weak nucleus~\cite{Knuth65} (including the Hughes-Kleinfeld semifields \cite{hughes1960seminuclear}), Bierbrauer's semifields in odd~\cite{Bierbrauer16} and even~\cite{bartoli2017family} characteristic (containing in particular the Budaghyan-Helleseth commutative semifields), Dempwolff's semifields~\cite{dempwolff2013more}, Zhou-Pott's commutative semifields~\cite{ZP13}, and Taniguchi's semifields~\cite{taniguchisf}. In all cases, we give new proofs using our framework that span only a few lines; dealing with even and odd characteristic simultaneously. We want to emphasize that in this way, our framework covers the oldest known semifields (Dickson's semifields) as well as the currently largest known family of semifields (Taniguchi's semifields~\cite{golouglu2023counting}).

Secondly, we present and analyze a new family of semifields that our framework provides. This family intersects with Zhou-Pott's family~\cite{ZP13}, but we prove that it contains many new examples as well that are not contained in any other infinite family of semifields. 

\subsection{Structure of the paper}

We start in Section~\ref{sec:prelim} by giving the necessary definitions and previous results. We also mention some connections to finite geometry. In particular, we define when we call two semifields equivalent which will be instrumental in later sections. We also examine the twisted cyclic semifields which is a family of semifields found in 2020 by Sheekey~\cite{sheekey2020new}. As we will explain, the twisted cyclic semifields of size $p^{2m}$ can be seen as a "trivial" case of our constructions (although twisted cyclic semifields of other sizes are not covered by our constructions). 

In Section~\ref{sec:newconstructions}, we present Construction~\ref{thm:general_structure} and Construction~\ref{thm:general_structure_2} which are the main contribution of the paper. These constructions need as "input" a (what we call)  \emph{admissible mapping}, and Section~\ref{sec:admissible} is devoted to find these admissible mappings. Next to a trivial admissible mapping (which returns the twisted cyclic semifields), we provide two non-trivial admissible mappings (Propositions~\ref{prop:admissible1} and~\ref{prop:admissible2}). Finding these admissible mappings requires a classification result on certain irreducible semilinear transformations over a finite field which we provide in Proposition~\ref{prop:irred}.

In Section~\ref{sec:known}, we present all known infinite families of semifields that can be constructed by our unified construction technique. As pointed out in the introduction, this covers (depending on how one counts the families) around 10 distinct families, see Table~\ref{t:1} for a brief overview showing which family of semifields can be recovered by which construction.

In Section~\ref{sec:new}, we investigate new semifields (i.e., semifields not covered by any known construction) that are produced by our framework. We show that this new family can be seen as a (vast) generalization of the Zhou-Pott semifields found in 2013~\cite{ZP13}. Specifically, we prove that the new family contains many semifields that are inequivalent to the Zhou-Pott semifields. Indeed, Theorem~\ref{thm:isotop} and Corollary~\ref{cor:ZP} show that the number of inequivalent semifields of order $p^{2m}$ in this family is exponential in $m$, while the Zhou-Pott semifield family only yields a quadratic number of inequivalent examples. We also prove exactly when two semifields in the new family are equivalent to each other, as well as prove some other properties of these new semifields. Lastly, we prove that our new constructions contain semifields that are not contained in \emph{any} other construction of semifields. 

The last section concludes the paper with a list of open questions, in particular we discuss some connections to coding theory and finite geometry.

\section{Preliminaries} \label{sec:prelim}
A \textbf{semifield} $\S = (S,+,\circ)$ is a set $S$ equipped with two operations $(+,\circ)$
satisfying the following axioms. 
\begin{enumerate}
\item[(S1)] $(S,+)$ is a group.
\item[(S2)] For all $x,y,z \in S$,
\begin{itemize}
\item $x\circ (y+z) = x \circ y + x \circ z$,
\item $(x+y)\circ z = x \circ z + y \circ z$.
\end{itemize}
\item[(S3)] For all $x,y \in S$, $x \circ y = 0$ implies $x=0$ or $y=0$.
\item[(S4)] There exists $\epsilon \in S$ such that $x\circ \epsilon = x = \epsilon \circ x$.
\end{enumerate}

In this paper, we will be interested only in finite semifields, and we assume all semifields to be finite. 
An algebraic object satisfying the first three of the above axioms is called 
a \textbf{pre-semifield}. The additive group of a (pre)-semifield is 
always an elementary abelian $p$-group \cite[p. 185]{Knuth65}, and can thus be viewed as
an $n$-dimensional $\F_{p}$-vector space. 
If $\circ$ is associative then $\S$ is necessarily a finite field by 
Wedderburn's theorem. 

A pre-semifield $(\F_p^n,+,\circ)$ 
can be converted to a semifield $\S = (\F_p^n,+,\ast)$ using {\em Kaplansky's trick} 
by defining the new multiplication as
\[
	(x \circ e) \ast (e \circ y) = (x \circ y),
\]
for any nonzero element $e \in \F_p^n$, making $(e \circ e)$ the multiplicative 
identity of $\S$. A pre-semifield is an $\F_{p}$-algebra, thus the multiplication
is bilinear, i.e. we have $\F_{p}$-linear left and right multiplications $L_x,R_y : \F_p^n \to \F_p^n$, with
\[
L_x(y) := x \circ y =: R_y(x).
\]
The mappings $L_x$ and $R_y$ are bijections whenever $x \ne 0$ (resp. $y \ne 0$) 
by (S3). 

Two pre-semifields $\P_1 = (\F_p^n,+,\circ_1)$ and $\P_2 = (\F_p^n,+,\circ_2)$ 
are said to be \textbf{isotopic} if there exist $\F_{p}$-linear bijections 
$N_1,N_2$ and $N_3$ of $\F_p^n$ satisfying
\[
N_1(x \circ_1 y) = N_2(x) \circ_2 N_3(y).
\]
Such a triple $\gamma = (N_1,N_2,N_3) \in \GL(\F_p^n)^3$ is called an \textbf{isotopism} between $\P_1$ and $\P_2$. 

Isotopisms between a pre-semifield $\P$ and itself are called \textbf{autotopisms} and the set of all autotopisms forms the \textbf{autotopism group} $\Aut(\P)$. In particular, the pre-semifield $\P$ and the corresponding semifield $\S$ constructed 
by Kaplansky's trick are isotopic. 

As we will outline in the next subsection, semifields can be used to construct projective planes and isotopic semifields yield isomorphic planes~\cite{Albert60}. We will thus consider isotopic semifields equivalent. 

Associative substructures of a semifield $\S = (\F_p^n,+,\ast)$, 
namely the \textbf{left, middle and right nuclei}, are defined as follows:
\begin{align*}
\mathcal{N}_l(\S) &:= \{ x \in \F_p^n \ : \ (x \ast y) \ast z = x \ast (y \ast z), \ y,z \in \F_p^n \},\\
\mathcal{N}_m(\S) &:= \{ y \in \F_p^n \ : \ (x \ast y) \ast z = x \ast (y \ast z), \ x,z \in \F_p^n \},\\
\mathcal{N}_r(\S) &:= \{ z \in \F_p^n \ : \ (x \ast y) \ast z = x \ast (y \ast z), \ x,y \in \F_p^n \}.
\end{align*}
It is easy to check that $\mathcal{N}_l(\S),\mathcal{N}_m(\S),\mathcal{N}_r(\S)$ are 
finite fields, and it is elementary to verify that $\S$ can then be viewed as a (left or right) vector space over its nuclei. This fact has been used in many previous constructions, and these constructions then necessarily lead to semifields with at least one large nucleus (see e.g.~\cite{CARDINALI2006940,ebert2009infinite,Knuth65}). As we will see, our constructions eschew this approach -- indeed we show that the new constructions produce both semifields with small and large nuclei, depending on the choice of parameters.

Nuclei are isotopy invariants for semifields and (since every pre-semifield is isotopic to a semifield via Kaplansky's trick) we can extend the definition of nuclei to pre-semifields as well. Based on this isotopy via Kaplansky's trick, we will not distinguish between pre-semifields and semifields from now on and just use the term \emph{semifield} for both objects.

\subsection{Some geometric connections and the Knuth orbit}
In this subsection we briefly outline the basic connections between semifields and finite geometry. For comprehensive treatments, we refer the reader to~\cite{dembowski1997finite,hughesbook}. The projective plane $\Pi(\S)$ is constructed from a semifield $\S$ of order $q$ as follows: 

Denote the $q^2+q+1$ points by $(0,0,1),(0,1,a),(a,b,1)$ for $a,b \in \S$, and similarly the lines by $[0,0,1],[0,1,a],[a,b,1]$. Then a point $(x,y,z)$ is incident with the line $[u,v,w]$ if $uz=y \circ v+xw$, where we define $1s=s1=s$ and $0s=s0=0$ for all $a \in \S$. It is easy to check that two points of $\Pi(\S)$ are incident with exactly one line and any two lines are incident with exactly one point, so $\Pi(\S)$ is indeed a projective plane. We say such a $\Pi$ is \emph{coordinatized} by $\S$. If the semifield is not a field, this semifield plane will be a non-desarguesian translation plane. 

A semifield $\S=(\F_p^n,+,\ast)$ can be used to construct a \textbf{spread} $\Sigma$, i.e., a partition of the non-zero elements of $\F_p^n \times \F_p^n$ into disjoint subspaces with $p^n$ elements:
\begin{equation}
		\Sigma = \{\{(y,y\circ x)\colon y \in \F_p^n\} \colon x \in \F_{p}^n\} \cup \{(0,y) \colon y \in \F_p^n\} .
\label{eq:ABB}
\end{equation}
This semifield spread itself can be used to construct the projective plane $\Pi$ directly via the Andr\'e-Bruck-Bose construction, see~\cite{dembowski1997finite}.

A major result (which motivates the definition of isotopy of semifields) states that two semifield planes are isomorphic if and only if the corresponding semifields are isotopic~\cite{Albert60}.

Outside of isotopy, there is a further equivalence on semifields that we need to examine, namely the \textbf{Knuth orbit}, which was defined by Knuth in 1965~\cite{Knuth65} using planar ternary rings and tensors. We follow here a more geometric view introduced in~\cite{ball2004six}. 

The first equivalence is very simple: If one has a semifield $\S= (\F_p^n,+,\circ)$, one defines the \textbf{dual semifield} $\pi_1(\S)=(\F_p^n,+,\ast)$ by defining $x \ast y := y \circ x$ for all $x,y \in \S$. The plane $\Pi(\pi_1(\S))$ is called the \textbf{dual plane} of $\S$, and semifield planes can be characterized as precisely the translations planes whose dual is also a translation plane~\cite[Chapter 8]{hughesbook}.

A way to get another semifield is to construct the dual spread of the spread defined by $\S$ in Eq.~\eqref{eq:ABB}. The dual spread consists of all spaces dual to the spaces in Eq.~\eqref{eq:ABB}. This dual spread again coordinatizes a semifield plane. We call the corresponding semifield the \textbf{transposed semifield} of $\S$ and denote it by $\pi_2(\S)$. We will give an example of how to compute the dual spread (and thus the transposed semifield) explicitly in Section~\ref{sec:known}.

Crucially, both $\pi_1(\S)$ and $\pi_2(\S)$ are generally \emph{not} isotopic to $\S$. The operations $\pi_1$ and $\pi_2$ generate a group isomorphic to $\Sym(3)$, and in total one semifield can thus yield six non-isotopic semifields, namely $\S$, $\pi_1(\S)$, $\pi_2(\S)$, $\pi_2\pi_1(\S)$, $\pi_1\pi_2(\S)$, $\pi_1\pi_2\pi_1(\S)$. These six semifields are called the \textbf{Knuth orbit} of $\S$. The $\Sym(3)$-action of the Knuth orbit extends to the three nuclei of a semifield. Specifically, $\pi_1$ exchanges left and right nucleus of a semifield and $\pi_2$ exchanges right and middle nucleus (up to isomorphy), see e.g.~\cite[Proposition 2.4.]{MP12}. 

In Section~\ref{sec:known}, the Knuth orbit will play a crucial rule. It turns out that many known semifields (see Table~\ref{t:1}) are actually \emph{not} isotopic to one of our constructions, however they are isotopic to a semifield that is in the Knuth orbit of our construction --- and thus still equivalent to it. 
\begin{definition}
	We call two semifields $\S_1$ and $\S_2$ of the same order equivalent if $\S_1$ is isotopic to a semifield in the Knuth orbit of $\S_2$. 
\end{definition}
In particular, the orders of the three nuclei together form an equivalence invariant for semifields.

\subsection{Notation}

We list notation that we will use throughout the paper. Sometimes we will opt to restate some of the notations to emphasize a point or to make a result self-contained.
\begin{notation}\label{not}
	\begin{itemize}
	\item []
		\item $L=\F_{p^m}$ is a finite field,
		\item $\sigma \colon x \mapsto x^{p^k}$, $\tau \colon x \mapsto x^{p^l}$ are field automorphisms of $L$,
		\item $\overline{\sigma}$, $\overline{\tau}$ are the inverses of $\sigma$ and $\tau$ in $\Gal(L)$,
		\item $K=\F_{p^{\gcd(k,m)}}$ is the fixed field of $\sigma \in \Gal(L)$, 
		\item $N_{L\colon K}(x) = x^{\frac{p^m-1}{p^{\gcd(k,m)}-1}}$ is the norm mapping of $L$ into $K$, 
		\item $\mathbf{x}$, $\mathbf{y}$ are vectors in $L^d$, for most of the paper with $d=2$,
		\item $\mathbf{x}^{\sigma}$ denotes the vector that results after applying $\sigma$ to $\mathbf{x}$ component-wise.
		\item $T \in \Gamma L (d,L)$ is a semilinear operator with associated field automorphism $\sigma$, again for most of the paper we will consider only $d=2$,
		\item With slight abuse of notation, we will denote for instance the $(p^k+1)$-st powers as $(\sigma+1)$-st powers. Similarly, we write for instance $x^{\sigma+1}$ for $x^{p^k+1}$.
	\end{itemize}
\end{notation}
We want to briefly note that some results stated in this paper for finite fields $L$ with automorphism $\sigma$ also hold (sometimes with minor tweaks) for arbitrary fields with cyclic Galois group over the fixed field $K$. 
\subsection{Twisted cyclic semifields}

One of the most classic and general semifield constructions was found in 1989 by Jha and Johnson~\cite{jha1989analog}, called the \emph{cyclic semifields.} 

\begin{definition}
	An element $T \in \Gamma L(d,L)$ is called irreducible if the only invariant subspaces of $T$ are $\{0\}$ and $L^d$. 
\end{definition}

\begin{theorem}\cite{jha1989analog}
	Let $T$ be an irreducible element of $\Gamma L(d,L)$. Fix an $L$-basis of $ V=L^d$, say $\{e_0,\dots,e_{d-1}\}$. Define a multiplication 
	\[\mathbf{x} \circ \mathbf{y} = \sum_{i=0}^{d-1}y_iT^i(\mathbf{x}),\]
	where $\mathbf{y}=\sum_{i=0}^{d-1}y_ie_i$. Then $(V,+,\circ)$ is a semifield, called a cyclic semifield.
\end{theorem}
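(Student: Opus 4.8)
The plan is to verify the semifield axioms (S1)--(S4) directly, the only non-routine point being (S3).

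\emph{Axioms (S1) and (S2).} Axiom (S1) is immediate since $(V,+)=(L^d,+)$ is an elementary abelian $p$-group. For (S2), left distributivity $\mathbf{x}\circ(\mathbf{y}+\mathbf{z})=\mathbf{x}\circ\mathbf{y}+\mathbf{x}\circ\mathbf{z}$ follows at once from $(\mathbf{y}+\mathbf{z})_i=y_i+z_i$ and distributivity in $L$, while right distributivity $(\mathbf{x}+\mathbf{y})\circ\mathbf{z}=\mathbf{x}\circ\mathbf{z}+\mathbf{y}\circ\mathbf{z}$ follows from the additivity of each iterate $T^i$ (a semilinear map is in particular additive). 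Both are one-line computations.

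\emph{Axiom (S3).} Here I would work through the left multiplication maps. For fixed $\mathbf{x}$ the map $L_{\mathbf{x}}\colon\mathbf{y}\mapsto\mathbf{x}\circ\mathbf{y}=\sum_{i=0}^{d-1}y_iT^i(\mathbf{x})$ is \emph{$L$-linear} in $\mathbf{y}$ --- this uses only that the $y_i$ are honest scalars and that the $T^i(\mathbf{x})$ are fixed vectors, not the semilinearity of $T$ --- and it carries the basis $e_0,\dots,e_{d-1}$ to $\mathbf{x},T(\mathbf{x}),\dots,T^{d-1}(\mathbf{x})$. Hence $L_{\mathbf{x}}$ is bijective precisely when $\{\mathbf{x},T(\mathbf{x}),\dots,T^{d-1}(\mathbf{x})\}$ is an $L$-basis of $V$. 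To show this for every $\mathbf{x}\neq0$, let $r$ be maximal with $\mathbf{x},T(\mathbf{x}),\dots,T^{r-1}(\mathbf{x})$ linearly independent over $L$ and put $W:=\lin_L\{\mathbf{x},\dots,T^{r-1}(\mathbf{x})\}$. By maximality $T^r(\mathbf{x})\in W$, and one checks $T(W)\subseteq W$: writing $\sigma$ for the field automorphism associated with $T$, for $w=\sum_{j=0}^{r-1}a_jT^j(\mathbf{x})$ one gets $T(w)=\sum_{j=0}^{r-2}\sigma(a_j)T^{j+1}(\mathbf{x})+\sigma(a_{r-1})T^r(\mathbf{x})$, which lies in $W$ because $W$ is an $L$-subspace containing $T^r(\mathbf{x})$. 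Thus $W$ is a nonzero $T$-invariant subspace, so irreducibility of $T$ forces $W=V$, i.e. $r=d$ and $\{\mathbf{x},\dots,T^{d-1}(\mathbf{x})\}$ is a basis. Therefore $L_{\mathbf{x}}$ is bijective for every $\mathbf{x}\neq0$; in particular $\mathbf{x}\circ\mathbf{y}=0$ with $\mathbf{x}\neq0$ gives $L_{\mathbf{x}}(\mathbf{y})=0=L_{\mathbf{x}}(0)$, hence $\mathbf{y}=0$, which is (S3). (The same conclusion then yields that each $R_{\mathbf{y}}$ with $\mathbf{y}\neq0$ is bijective, as a semifield requires.)

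\emph{Axiom (S4) and conclusion, and the main obstacle.} Since $e_0$ has coordinate vector $(1,0,\dots,0)$, one has $\mathbf{x}\circ e_0=T^0(\mathbf{x})=\mathbf{x}$, so $e_0$ is a right identity for any choice of basis; taking the basis to be $e_i:=T^i(e_0)$ for an arbitrary nonzero $e_0$ --- legitimate, since $\{T^i(e_0)\}_{i=0}^{d-1}$ is a basis by the argument above --- also gives $e_0\circ\mathbf{x}=\sum_i x_iT^i(e_0)=\sum_i x_ie_i=\mathbf{x}$, so $e_0$ is a two-sided identity and $(V,+,\circ)$ is a semifield; for an arbitrary basis one obtains at least a pre-semifield, which is identified with a semifield up to isotopy in this paper. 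The only real obstacle is the linear-independence argument inside (S3): one must be careful that an invariant subspace of the semilinear operator $T$ means an $L$-subspace stable under $T$, and must verify that the \emph{partial} orbit $\{\mathbf{x},\dots,T^{d-1}(\mathbf{x})\}$ already spans $V$ (and not merely the full orbit). This is exactly where irreducibility of $T$ is used, and it is the heart of the proof; everything else is bookkeeping.
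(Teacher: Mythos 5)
Your proof is correct, and it takes a genuinely different (and arguably cleaner) route than the one the paper gestures at. The paper gives no proof of this theorem but indicates, via Theorem~\ref{thm:sheekeysingular} (Sheekey's nonsingularity criterion, which for $y_d=0$ recovers the Jha--Johnson case), that the work is to show the right-multiplications $\mathbf{x}\mapsto\sum_i y_iT^i(\mathbf{x})$ are nonsingular; that map is only $\F_p$-linear in $\mathbf{x}$, so the analysis is necessarily more delicate. You instead observe that the \emph{left}-multiplication $L_{\mathbf{x}}\colon\mathbf{y}\mapsto\sum_i y_iT^i(\mathbf{x})$ is honestly $L$-linear in $\mathbf{y}$ and sends the basis $\{e_i\}$ to $\{T^i(\mathbf{x})\}$, reducing (S3) to the assertion that every nonzero $\mathbf{x}$ is a cyclic vector for $T$. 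Your proof of that fact is the right one: the span $W$ of a maximal independent partial orbit picks up $T^r(\mathbf{x})$ by maximality, and the semilinearity identity $T\bigl(\sum a_jT^j(\mathbf{x})\bigr)=\sum\sigma(a_j)T^{j+1}(\mathbf{x})$ then closes $W$ under $T$, so irreducibility forces $W=V$ and $r=d$. Since (S3) together with bi-additivity and finiteness makes all nonzero left- and right-multiplications bijective, you get the same conclusion as the right-multiplication route but with only elementary linear algebra; the price is that the statement you prove is specific to this family, whereas Sheekey's criterion is reusable for the twisted generalization. Your remark about (S4) is also handled correctly: with an arbitrary basis one gets a pre-semifield, and the choice $e_i=T^i(e_0)$ (legitimate by your cyclic-vector claim) upgrades it to a genuine two-sided identity, consistent with the paper's convention of identifying pre-semifields with semifields via Kaplansky's trick.
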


This construction turns out to be equivalent to one using skew polynomial rings, see~\cite{lavrauw2013semifields}. 

The cyclic semifields were generalized in 2020 by Sheekey~\cite{sheekey2020new}. His construction, too, uses skew-polynomials. We eschew skew polynomials and formulate his results using irreducible semilinear transformations, for the translation we refer to~\cite{lavrauw2013semifields}. Changing the "language" from skew-polynomials to semilinear transformations allows us to use concepts from matrix theory that substantially simplify our approach.

Let us first introduce a convention we will use for semilinear transformations from now on. We will write $T \in \Gamma L(d,L)$ with associated field automorphism $\sigma$ as
\[T\begin{pmatrix}
	x_1\\
	\vdots\\
	x_d
\end{pmatrix} = M \mathbf{x}^\sigma = M\begin{pmatrix}
	x_1^\sigma\\
	\vdots\\
	x_d^\sigma
\end{pmatrix},\]
where $M \in \GL(d,L)$. We denote the linear mapping (or, after fixing an arbitrary basis, matrix) $M$ that belongs to $T$ by $M_T$. We are now ready to state the definition of the twisted cyclic semifields {\cite[Theorem 6]{sheekey2020new}}.

\begin{theorem}[Twisted cyclic semifields]\label{thm:sheekey}
	Let $T$ be an irreducible element in $\Gamma L(d,L)$ with associated field automorphism $\sigma$ of order $t$ and fixed field $K$. Let further $\rho$ be an automorphism of $L$ with fixed field $K'\leq K$ and $\eta \in L$ chosen such that
		\[N_{L\colon K'}(\eta)N_{K\colon K'}((-1)^{d(t-1)}\det(M_T))\neq 1.\]
	
	Define a binary operation on $V$ via
	\[\mathbf{x} \circ \mathbf{y} = \sum_{i=0}^{d-1}y_iT^i(\mathbf{x})+\eta y_0 T^d(\mathbf{x}),\]
	where $\mathbf{y}=\left(\begin{smallmatrix}y_0\\\vdots\\y_{d-1}\end{smallmatrix}\right)$. Then $\S_T=(V,+,\circ)$ is a semifield, called a twisted cyclic semifield.
\end{theorem}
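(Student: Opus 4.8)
The plan is to verify the semifield axioms (S1)--(S3) directly — (S4) is irrelevant since we only claim a pre-semifield, which Kaplansky's trick upgrades. Axiom (S1) is immediate, and (S2) is clear because $\mathbf{x}\mapsto T^i(\mathbf{x})$ is additive and the coefficients $y_i$ enter linearly, so the multiplication $\mathbf{x}\circ\mathbf{y}$ is biadditive (note it is $\F_p$-bilinear, not $L$-bilinear, because the $T^i$ are only semilinear). The entire content is therefore (S3): showing $\mathbf{x}\circ\mathbf{y}=0$ with $\mathbf{x},\mathbf{y}\neq 0$ is impossible. Equivalently, for each fixed $\mathbf{y}\neq 0$ the $\F_p$-linear map $R_{\mathbf{y}}\colon \mathbf{x}\mapsto \mathbf{x}\circ\mathbf{y}$ must be injective.

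The key idea is to recognize $R_{\mathbf{y}}$ as (a twist of) a polynomial in $T$. Writing $f_{\mathbf{y}}(T)=\sum_{i=0}^{d-1}y_iT^i+\eta y_0T^d$, we have $R_{\mathbf{y}}=f_{\mathbf{y}}(T)$ as an operator on $V=L^d$. Since $T$ is irreducible of "degree" $d$ over $L$ (its minimal/characteristic behaviour on the $L$-vector space $V$ is governed by a skew polynomial of degree $d$, cf.\ the cyclic-semifield setup), any nonzero operator of the form $g(T)$ with $g$ a skew polynomial of degree $<d$ is automatically invertible — this is exactly why the untwisted cyclic semifield works. The twist adds the single extra top-degree term $\eta y_0 T^d$, so $f_{\mathbf{y}}$ has degree exactly $d$ when $y_0\neq 0$. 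The strategy is: if $y_0=0$ the operator is a nonzero polynomial of degree $\le d-1$ in $T$ and hence invertible by irreducibility of $T$; if $y_0\neq 0$, one must show $f_{\mathbf{y}}(T)$ has trivial kernel, and this is precisely where the norm condition
\[
N_{L\colon K'}(\eta)\,N_{K\colon K'}\bigl((-1)^{d(t-1)}\det(M_T)\bigr)\neq 1
\]
enters.

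For the $y_0\neq 0$ case I would argue as follows. A kernel vector $\mathbf{x}$ of $f_{\mathbf{y}}(T)$ satisfies $T^d(\mathbf{x}) = -\eta^{-1}\sum_{i=0}^{d-1}y_i T^i(\mathbf{x})$ (after pulling the scalar through — here one must be careful, since $\eta$ is a scalar but $T$ is semilinear, so $\eta^{-1}$ becomes $\eta^{\rho\text{-twisted}}$; the automorphism $\rho$ with fixed field $K'$ is exactly the bookkeeping device for how the twisting scalar interacts with $T^d$). This shows the $T$-cyclic subspace generated by $\mathbf{x}$ is $T$-invariant and at most $d$-dimensional over $L$; by irreducibility it is either $\{0\}$ or all of $V$. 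If it is all of $V$, then $\{\mathbf{x},T\mathbf{x},\dots,T^{d-1}\mathbf{x}\}$ is an $L$-basis and in this basis $T$ is a companion-type semilinear matrix whose $\sigma$-twisted determinant can be computed explicitly; comparing $\det$ in this basis with $\det(M_T)$ via the semilinear change-of-basis formula (which introduces the $(-1)^{d(t-1)}$ sign and a norm from $K$ down to $K'$, coming from iterating $\sigma$ a total of $t$ times around its cycle) forces the displayed norm expression to equal $1$ — contradiction. Hence the cyclic subspace is $\{0\}$, i.e.\ $\mathbf{x}=0$.

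The main obstacle is the bookkeeping in that last step: tracking how the single scalar $\eta$ — acted on by the various automorphisms $\sigma$ (order $t$, fixed field $K$) and $\rho$ (fixed field $K'\le K$) as it is moved past powers of $T$ — assembles into $N_{L\colon K'}(\eta)$, and how the semilinear determinant of the companion form relates to $\det(M_T)$ through $N_{K\colon K'}$ and the sign $(-1)^{d(t-1)}$. Concretely one needs the identity $\det$ of a product of $t$ semilinear maps picks up norms because $\det(M\mathbf{x}^\sigma \text{ composed } t \text{ times}) = N_{?}(\det M)$, and the nesting $K'\le K\le L$ splits this norm into the two factors that appear. Getting the exponents and the index of the sign exactly right (the $d(t-1)$) is the delicate part; everything else is the standard cyclic-semifield argument together with an invertibility criterion for skew polynomials of degree $<d$ evaluated at an irreducible $T$.
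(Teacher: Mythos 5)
The paper does not prove this theorem at all: it is cited directly from Sheekey~\cite{sheekey2020new} as Theorem~6 there, and the paper immediately afterwards explains that the entire content is the non-singularity criterion stated as Theorem~\ref{thm:sheekeysingular} (Sheekey's Theorem~5), which is also cited, not proved. So you are effectively being asked to reprove Sheekey's Theorem~5. Your outline correctly identifies this as the crux and correctly handles the easy half: (S1)--(S2) are trivial, and when $y_0=0$ a nonzero skew polynomial of degree $\le d-1$ in an irreducible $T$ is invertible by the $T$-cyclic-subspace argument. That much is sound.

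However there are two genuine problems with the remaining half. First, the hard part of the statement --- deriving that a nonzero kernel vector forces $N_{L\colon K'}(\eta)N_{K\colon K'}((-1)^{d(t-1)}\det(M_T))=1$ --- is not actually carried out. You correctly note that such a kernel vector would make $\{\mathbf{x},T\mathbf{x},\dots,T^{d-1}\mathbf{x}\}$ an $L$-basis in which $T$ has a companion form, and that one should compare the determinant invariant $N_{L\colon K}(\det M)$ (which is the quantity that is actually basis-independent under the semilinear change-of-basis rule $\det(M')=\det(M)\det(P)^{\sigma-1}$) in the two bases. But you then defer exactly this computation as ``delicate bookkeeping,'' and it is precisely in that bookkeeping that the sign $(-1)^{d(t-1)}$, the appearance of $N_{L\colon K'}$ versus $N_{L\colon K}$, and the precise relation to $\det(M_T)$ all live. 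Without it you have only restated the conclusion, not proved it.

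Second, and more concretely, your explanation of the role of $\rho$ is mistaken. You describe $\rho$ (with fixed field $K'$) as ``the bookkeeping device for how the twisting scalar $\eta$ interacts with $T^d$'' when scalars are pulled past the semilinear $T$. That is not what $\rho$ is: it is a free parameter that genuinely twists the multiplication --- the intended formula reads $\eta\, y_0^\rho\, T^d(\mathbf{x})$, not $\eta\, y_0\, T^d(\mathbf{x})$ as printed (the paper's own later discussion of the trivial admissible mapping, where it writes ``replaces $\eta y_1 T^2(\mathbf{x})$ above with $\eta y_1^\tau T^2(\mathbf{x})$,'' makes this clear, as does the presence of $K'$ rather than $K$ in the norm condition). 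With the literal formula $\eta y_0 T^d(\mathbf{x})$ the parameter $\rho$ never enters at all and the condition should involve only $N_{L\colon K}$, which is the form appearing in Theorem~\ref{thm:sheekeysingular}. So your proposed mechanism for where $K'$ comes from --- semilinearity of $T$ acting on the scalar $\eta$ --- would not produce the stated condition; you would need to account for the extra $\rho$-twist on $y_0$ explicitly and track how it propagates into the companion-form determinant.
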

Note that $\eta=0$ recovers the cyclic semifields. 

The only semifield axiom that is not obvious to prove is that the right-multiplications $R_\mathbf{y}(\mathbf{x})=\mathbf{x}\circ \mathbf{y}$ for $\mathbf{y} \neq 0$ are all non-singular. The main ingredient of the proof of Theorem~\ref{thm:sheekey} is thus the following, again translated from the skew-polynomial setting to our notation:
\begin{theorem}{\cite[Theorem 5]{sheekey2020new}}\label{thm:sheekeysingular}
	Let $\mathbf{x},\mathbf{y}\in L^d$ and $T$ be an irreducible transformation in $\Gamma L(d,L)$ with associated field automorphism $\sigma$ of order $t$ with fixed field $K$. Then the mappings $F_{\mathbf{y}} \colon L^d \rightarrow L^d$ defined by
	\[F_{\mathbf{y}}(\mathbf{x})=\sum_{i=0}^{d-1}y_iT^i(\mathbf{x})+ y_dT^d(\mathbf{x})\]
	are non-singular for any $0\neq \mathbf{y}=\left(\begin{smallmatrix}y_0\\\vdots\\y_{d-1}\end{smallmatrix}\right)$ if and only if $y_d=0$ or  $$N_{L\colon K}(y_0/y_d)\neq (-1)^{d(t-1)}N_{L\colon K}(\det(M_T))$$ if $y_d\neq 0$.
\end{theorem}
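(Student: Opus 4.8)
The statement unravels into three things: (a) for $\mathbf y\neq 0$ with $y_d=0$, $F_{\mathbf y}$ is always non-singular; (b) for $\mathbf y\neq 0$ with $F_{\mathbf y}$ singular, necessarily $y_d\neq 0$ and $N_{L\colon K}(y_0/y_d)=(-1)^{d(t-1)}N_{L\colon K}(\det M_T)$; (c) conversely, every field element satisfying this equality occurs as $y_0/y_d$ for some singular $F_{\mathbf y}$, so that the equivalence is sharp. The plan is to obtain (a) and (b) by assuming a nonzero vector in $\ker F_{\mathbf y}$ and exploiting irreducibility of $T$ to bring $T$ into companion form, and to obtain (c) by a structural/counting argument about minimal polynomials.

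For (a): if $\mathbf y\neq 0$ and $y_d=0$, then $F_{\mathbf y}=\sum_{i<d}y_iT^i$ is precisely the right multiplication by $\mathbf y$ in the cyclic semifield attached to $T$, so its non-singularity is exactly axiom (S3) for that semifield, which is guaranteed by the cyclic semifield theorem above. Directly: if $F_{\mathbf y}(\mathbf v)=0$ with $\mathbf v\neq 0$, the $L$-span of $\{T^i\mathbf v\colon i\geq 0\}$ is $T$-invariant (because $T$ is $\sigma$-semilinear) and nonzero, hence equals $L^d$ by irreducibility; a minimality argument then forces $(\mathbf e_0,\dots,\mathbf e_{d-1}):=(\mathbf v,T\mathbf v,\dots,T^{d-1}\mathbf v)$ to be an $L$-basis, and $\sum_{i<d}y_iT^i(\mathbf v)=0$ becomes a nontrivial $L$-dependence among basis vectors, which is impossible. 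The same observation shows that \emph{whenever} $F_{\mathbf y}(\mathbf v)=0$ with $\mathbf v\neq 0$, the tuple $(\mathbf e_i)$ above is an $L$-basis of $L^d$.

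For (b): work in the basis $(\mathbf e_i)$. There $T\mathbf e_i=\mathbf e_{i+1}$ for $i<d-1$ and $T\mathbf e_{d-1}=\sum_{i<d}c_i\mathbf e_i$ for suitable $c_i\in L$ (so that $g(X)=X^d-\sum_{i<d}c_iX^i$ is the minimal polynomial of $\mathbf v$ under $T$), hence $M_T$ is the companion matrix of $g$ in this basis and $\det M_T=(-1)^{d-1}c_0$. Expanding $0=F_{\mathbf y}(\mathbf v)=\sum_{i<d}y_i\mathbf e_i+y_d\sum_{i<d}c_i\mathbf e_i$ forces $y_i=-y_dc_i$ for all $i<d$; thus $y_d=0$ would give $\mathbf y=0$, so $y_d\neq 0$ and $y_0/y_d=-c_0$. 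Since $N_{L\colon K}(\det M_T)$ does not depend on the chosen basis — a change of basis by $P\in\GL(d,L)$ replaces $M_T$ by $P^{-1}M_TP^{\sigma}$, hence $\det M_T$ by $(\det P)^{\sigma-1}\det M_T$, and $N_{L\colon K}$ kills $(\sigma-1)$-th powers — applying $N_{L\colon K}$ to $y_0/y_d=-c_0$ and tracking the power of $-1$ gives $N_{L\colon K}(y_0/y_d)=(-1)^{d(t-1)}N_{L\colon K}(\det M_T)$. This implication, in contrapositive form, is exactly what is needed to show that the twisted cyclic multiplications of Theorem~\ref{thm:sheekey} have non-singular right multiplications.

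For (c): one must show that $\{-c_0(\mathbf v)\colon \mathbf v\in L^d\setminus\{0\}\}$, the set of constant terms of minimal polynomials of vectors, is the \emph{whole} coset $\{z\in L^*\colon N_{L\colon K}(z)=(-1)^{d(t-1)}N_{L\colon K}(\det M_T)\}$ of $\ker N_{L\colon K}$ in $L^*$ (the inclusion is (b); the point is surjectivity). Granting this, for any $\rho$ with that norm choose $\mathbf v$ with $-c_0(\mathbf v)=\rho$ and set $y_d=1$, $y_i=-c_i(\mathbf v)$; then $\mathbf v\in\ker F_{\mathbf y}$ and $y_0/y_d=\rho$. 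Proving this surjectivity is the main obstacle. I would do it by counting: irreducibility makes $L^d$ a simple module over the skew polynomial ring $L[X;\sigma]$ (with $X$ acting as $T$), the fibres of $\mathbf v\mapsto g$ are single free orbits of the automorphism group of $L^d$ as an $L[X;\sigma]$-module — a finite field's multiplicative group — and so all have the same size; comparing $p^{md}-1$ with this size and with the size of the target coset forces every coset element to be attained. Equivalently one can argue inside $L[X;\sigma]$: the monic irreducibles $g'$ of degree $d$ with $L[X;\sigma]/L[X;\sigma]g'\cong L^d$ are exactly those sharing the bound of $g$ in the centre $K[X^t]$, and their constant terms run through precisely the required norm coset. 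The one genuinely non-formal input either way is the order of $\End_{L[X;\sigma]}(L^d)$ (equivalently, the common fibre size), which is what makes the coset come out exactly right.
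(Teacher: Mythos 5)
The paper does not prove this theorem: it is quoted as Sheekey's \cite[Theorem~5]{sheekey2020new}, and the only work the paper does with it is the short change of variables in Corollary~\ref{cor:sheekey}, so there is no ``paper's own proof'' to measure your argument against. Taken on its own merits, parts (a) and (b) of your sketch are sound: if $F_{\mathbf y}(\mathbf v)=0$ with $\mathbf v\neq 0$, then irreducibility forces $(\mathbf v,T\mathbf v,\dots,T^{d-1}\mathbf v)$ to be an $L$-basis (the chain $\mathrm{span}(\mathbf v,\dots,T^{j-1}\mathbf v)$ becomes $T$-invariant once it stabilizes, so it stabilizes only at $L^d$), and in that basis $M_T$ is a companion matrix, giving $y_i=-y_dc_i$ and hence $y_d\neq 0$. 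One caution on the bookkeeping: the companion determinant is $(-1)^{d-1}c_0$, so $y_0/y_d=-c_0=(-1)^d\det M_T'$, and applying $N_{L\colon K}$ (using $N_{L\colon K}(-1)=(-1)^t$ for $p$ odd) produces the sign $(-1)^{dt}$ rather than the $(-1)^{d(t-1)}$ you assert; the two agree when $d$ is even, which is the only case the paper uses, but you should reconcile your derivation with Sheekey's conventions for odd $d$ before asserting the theorem's exact form.

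The genuine gap is in part (c), and it has two layers, only one of which you acknowledge. The one you do flag is that the counting rests on the order of $\End_{L[X;\sigma]}(L^d)$; it is indeed a field by Schur's lemma, but you leave its order (a quantity involving $|K|$, $d$ and $t$) unspecified, and without it the cardinalities cannot be matched. The second, unflagged, issue is that you have misstated the target of (c): you propose to show that the set $\{-c_0(\mathbf v)\colon\mathbf v\neq 0\}$ of constant terms fills the whole norm coset, and then ``for any $\rho$ with that norm choose $\mathbf v$ \dots\ and set $y_i=-c_i(\mathbf v)$.'' That argument only produces, for each admissible $\rho$, \emph{some} singular $F_{\mathbf y}$ with $y_0/y_d=\rho$; the theorem asserts that \emph{every} $\mathbf y$ with $y_d\neq 0$ and the norm condition on $y_0/y_d$ is singular, with $y_1,\dots,y_{d-1}$ arbitrary. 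To get this you must show the full map $\mathbf v\mapsto(c_0(\mathbf v),\dots,c_{d-1}(\mathbf v))$ is onto the set of \emph{all} tuples whose first coordinate lies in the coset, i.e.\ a set of size $(p^m)^{d-1}\cdot(p^m-1)/(|K|-1)$, not merely that its first coordinate surjects onto a set of size $(p^m-1)/(|K|-1)$. Your counting plan---equal fibre sizes, compare $(p^{md}-1)$ divided by the common fibre size against the target set---is the right device for the stronger statement, but as written the proposal conflates the two targets, and this, together with the unevaluated order of the endomorphism field, is exactly where the proof is still open.
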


In the following section we will make frequent use of this result in the following form:
\begin{corollary} \label{cor:sheekey}
	Let $\mathbf{x}\in L^d$ and $T$ be an irreducible transformation in $\Gamma L(d,L)$ with associated field automorphism $\sigma$ of order $t$ with fixed field $K$. Then the mappings $F_{y_1,\dots,y_{d-1}} \colon L^d \rightarrow L^d$ defined by
	\[F_{y_1,\dots,y_{d-1}}(\mathbf{x})=\sum_{i=1}^{d-1}y_iT^{i-1}(\mathbf{x})+\eta T^{d-1}(\mathbf{x})+ \det(M_T)^{\overline{\sigma}} T^{-1}(\mathbf{x}) \]
	for any $y_1,\dots,y_{d-1}$ are non-singular for any $\eta \in L$ with $N_{L\colon K}(\eta)\neq (-1)^{d(t-1)}$.
\end{corollary}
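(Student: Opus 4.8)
The idea is to massage the operator $F_{y_1,\dots,y_{d-1}}$ into the shape covered by Theorem~\ref{thm:sheekeysingular} and then read off the non-singularity condition. First I would observe that the operator $T^{-1}$ appears on the right-hand side, so it is natural to pre-compose everything with $T$: the map $F_{y_1,\dots,y_{d-1}}$ is non-singular if and only if $F_{y_1,\dots,y_{d-1}}\circ T$ is non-singular (composition with the bijection $T$ does not change the rank). Computing $F_{y_1,\dots,y_{d-1}}(T(\mathbf{x}))$ gives
\[
\sum_{i=1}^{d-1}y_iT^{i}(\mathbf{x})+\eta T^{d}(\mathbf{x})+\det(M_T)^{\overline\sigma}\,T^{0}(\mathbf{x})
= \det(M_T)^{\overline\sigma}\,\mathbf{x}+\sum_{i=1}^{d-1}y_iT^{i}(\mathbf{x})+\eta\,T^{d}(\mathbf{x}).
\]
This is exactly $F_{\mathbf{y}'}$ in the notation of Theorem~\ref{thm:sheekeysingular} with the coefficient vector $\mathbf{y}'=(y_0',y_1',\dots,y_{d-1}')$ given by $y_0'=\det(M_T)^{\overline\sigma}$, $y_i'=y_i$ for $1\le i\le d-1$, and the top coefficient (the $T^d$-coefficient, called $y_d$ in Theorem~\ref{thm:sheekeysingular}) equal to $\eta$.

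Next I would apply Theorem~\ref{thm:sheekeysingular} directly. Since $y_0'=\det(M_T)^{\overline\sigma}\neq 0$, the vector $\mathbf{y}'$ is nonzero regardless of the value of $\eta$, so the theorem applies. If $\eta=0$ the map is automatically non-singular; if $\eta\neq 0$ the criterion is
\[
N_{L\colon K}\!\big(y_0'/\eta\big)\neq (-1)^{d(t-1)}N_{L\colon K}(\det(M_T)).
\]
Now I would simplify the left-hand side using multiplicativity of the norm together with the fact that $N_{L\colon K}$ is invariant under the Galois group of $L$ over $K$ (indeed over the prime field), which gives $N_{L\colon K}(\det(M_T)^{\overline\sigma})=N_{L\colon K}(\det(M_T))$. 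Hence
\[
N_{L\colon K}\!\big(\det(M_T)^{\overline\sigma}/\eta\big)=\frac{N_{L\colon K}(\det(M_T))}{N_{L\colon K}(\eta)},
\]
and the non-singularity condition becomes $N_{L\colon K}(\det(M_T))/N_{L\colon K}(\eta)\neq (-1)^{d(t-1)}N_{L\colon K}(\det(M_T))$. Since $N_{L\colon K}(\det(M_T))\in K^\ast$ is nonzero, I can cancel it from both sides, and (using that $(-1)^{d(t-1)}$ is its own inverse) this is equivalent to $N_{L\colon K}(\eta)\neq (-1)^{d(t-1)}$, which is precisely the stated hypothesis. This also transparently includes the $\eta=0$ case, so the two cases merge.

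The computation is essentially bookkeeping, so there is no deep obstacle; the one point that needs a little care is the Galois-invariance step $N_{L\colon K}(\det(M_T)^{\overline\sigma})=N_{L\colon K}(\det(M_T))$, i.e. making sure that applying the automorphism $\overline\sigma$ to an element before taking its norm down to $K$ does not change the value. This follows because $\overline\sigma$ permutes the conjugates of $\det(M_T)$ over the prime field and the norm $N_{L\colon K}$ is a product over exactly such a conjugacy orbit (equivalently, $N_{L\colon K}$ restricted to any subfield stable situation commutes with field automorphisms fixing $K$, and here even with all of $\Gal(L/\F_p)$). Once that is in place, everything reduces to cancelling a nonzero scalar in $K^\ast$, and the corollary follows.
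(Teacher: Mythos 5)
Your proof is correct and follows the same route as the paper: pre-compose with $T$ so that $F_{y_1,\dots,y_{d-1}}\circ T$ has powers $T^0,\dots,T^d$, then apply Theorem~\ref{thm:sheekeysingular} with $y_0 = \det(M_T)^{\overline\sigma}$ and $y_d=\eta$ and simplify. One small caveat on the Galois-invariance step: $N_{L\colon K}$ is invariant under $\Gal(L/K)$ (since for $\rho$ fixing $K$, $N_{L\colon K}(a^\rho)=N_{L\colon K}(a)^\rho=N_{L\colon K}(a)$), but your parenthetical claim that it is invariant under all of $\Gal(L/\F_p)$ is false in general, as $N_{L\colon K}(a)$ is an element of $K$, not of $\F_p$; fortunately $\overline\sigma\in\Gal(L/K)$, which is all the argument needs.
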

\begin{proof}
We use Theorem~\ref{thm:sheekeysingular}. We apply a transformation $\mathbf{x}\mapsto T^{-1}\mathbf{x}$ and then set $y_0=\det(M_T)^{\overline{\sigma}} \neq 0$ and $y_d=\eta$. Then $N_{L\colon K}(y_0)=N_{L\colon K}(\det(M_T))$, so the condition in Theorem~\ref{thm:sheekeysingular} boils down to $N_{L\colon K}(\eta)\neq (-1)^{d(t-1)}$.
\end{proof}


In the next section, we will give constructions of semifields where the right-multiplications will be non-singular mappings of the form in Corollary~\ref{cor:sheekey} for the special case $d=2$. It turns out some of these are known already (although our framework will show that they are all related), but we also find many new examples; in particular we will construct many examples which are \emph{not} twisted cyclic semifields.

\section{Two general constructions of semifields of order $p^{2m}$} \label{sec:newconstructions}
We will now concentrate on constructions of semifields of order $p^{2m}$. In particular, we will be using Theorem~\ref{thm:sheekeysingular} and Corollary~\ref{cor:sheekey} for $d=2$. The resulting semifields will then be defined on the set $L^2$. 

\begin{construction} \label{thm:general_structure}
	Let $V=L^2$, $\mathbf{x},\mathbf{y}\in V$ with  $\mathbf{y}=\left(\begin{smallmatrix} y_0\\ y_1 \end{smallmatrix}\right)$ and $\sigma$ be a field automorphism of $L$ with fixed field $K$.  Further, 	let $T_a \in \Gamma L(2,L)$, $a \in L^*$ be irreducible transformations satisfying  $T_{a}+T_b=T_{a+b}$ for any $a,b \in L$
	where we set, with abuse of notation, $T_0=T_0^{-1}=0$. Then
	\[\mathbf{x} \circ \mathbf{y} = y_0 \mathbf{x}+ \eta T_{y_1}(\mathbf{x}) +\det(M_{T_{y_1}})^{\overline{\sigma}} T_{y_1}^{-1}(\mathbf{x})  \]

	defines a semifield for any $\eta \in L$ with $N_{L\colon K}(\eta)\neq 1$.
\end{construction}
\begin{proof}
	All semifield properties are elementary to prove except right-distributivity and the non-existence of zero divisors. Let us start with the distributivity. A straightforward calculation shows that if $M_{T_a}=\left(\begin{smallmatrix} c_1 & c_2 \\ c_3 & c_4 \end{smallmatrix}\right)$ then 
	$$T_{a}^{-1}(\mathbf{x})=\frac{1}{\det(M_{T_{a}})^{\overline{\sigma}}}\begin{pmatrix}
		c_4^{\overline{\sigma}} & -c_2^{\overline{\sigma}} \\
		-c_3^{\overline{\sigma}} & c_1^{\overline{\sigma}}
	\end{pmatrix} \mathbf{x}^{\overline{\sigma}}.$$

	So if $T_{a}+T_b=T_{a+b}$ then $\det(M_{T_a})^{\overline{\sigma}}T_{a}^{-1}+\det(M_{T_b})^{\overline{\sigma}}T_b^{-1}=\det(M_{T_{a+b}})^{\overline{\sigma}}T_{a+b}^{-1}$. 
	This immediately shows $\mathbf{x}\circ (\mathbf{y}+\mathbf{z})=\mathbf{x}\circ \mathbf{y}+\mathbf{x}\circ \mathbf{z}$ for all $\mathbf{x},\mathbf{y},\mathbf{z} \in V$.

	It thus only remains to check that the semifield has no zero divisors.
	
		Consider the right-multiplications $R_{\mathbf{y}}(\mathbf{x})$ for $\mathbf{y}\neq \mathbf{0}$. If $y_1=0$ then $R_{\mathbf{x}}(\mathbf{y})=y_0\mathbf{x}$ is non-singular for $y_0\neq 0$. In the case that $y_1\neq 0$ the $R_{\mathbf{y}}$ are non-singular by Corollary~\ref{cor:sheekey}.

\end{proof}

\begin{construction} \label{thm:general_structure_2}
	Let $V=L^2$, $\mathbf{x},\mathbf{y}\in V $ with $\mathbf{y}=\left(\begin{smallmatrix} y_0\\ y_1 \end{smallmatrix}\right)$. Let further $\sigma$ be a field automorphisms of $L$ with fixed field $K$.  Further, 	let $T_a \in \Gamma L(2,L)$, $a \in L^*$ be irreducible transformations satisfying  $T_{a}+T_b=T_{a+b}$ for any $a,b \in L$, where we set $T_0=0$. Then
	\[\mathbf{x} \circ \mathbf{y} = y_0 \mathbf{x}+ T_{y_1}(\mathbf{x}) \]

	defines a semifield.
\end{construction}
\begin{proof}
	The only non-obvious part to prove is again right distributivity and the non-existence of zero divisors. Right-distributivity follows immediately from the condition $T_{a}+T_b=T_{a+b}$ for any $a,b \in L$, and the right-multiplication mappings
	\[R_{\mathbf{y}}(\mathbf{x})=y_0 \mathbf{x}+ T_{y_1}(\mathbf{x})\]
	are non-singular by Theorem~\ref{thm:sheekeysingular} for $d=2$ and $y_d=0$.
\end{proof}

Constructions~\ref{thm:general_structure} and \ref{thm:general_structure_2} both rely on the existence of irreducible semilinear transformations $T_a$ satisfying $T_{a}+T_b=T_{a+b}$ for any $a,b \in L$. 

From a high level view, this means that an $m$-dimensional subspace in $\End_{\F_p}(L^2)$ of  irreducible transformation semilinear over $L$ can be used to construct a $2m$-dimensional subspace in $\End_{\F_p}(L^2)$ where every non-zero mapping is invertible (namely, the set of right-multiplications of a semifield with $p^{2m}$ elements).

Finding examples for such spaces of irreducible transformations will be the objective of the next section.

\section{Finding admissible mappings} \label{sec:admissible}
Both the constructions presented in the previous section need a certain mapping as a building block. 

\begin{definition}
We call a mapping $\mathcal{T} \colon L \rightarrow \Gamma L (2,L) \cup \{0\}$ admissible if
\begin{enumerate}
	\item $\mathcal{T}(0)=0$,
	\item $\mathcal{T}(a)+\mathcal{T}(b)=\mathcal{T}({a+b})$ for any $a,b \in L$,
	\item $\mathcal{T}(a) \in \Gamma L (2,L)$ is irreducible for all $a \in L^*$.
\end{enumerate}
\end{definition}

Each admissible mapping will immediately lead to semifields via Constructions~\ref{thm:general_structure} and \ref{thm:general_structure_2}. 

\subsection{The trivial admissible mapping}

Let us consider an obvious choice for $\mathcal{T}$ first. Choose an arbitrary, fixed irreducible transformation $T \in \Gamma L (2,L)$. Then define $\mathcal{T}(a)=aT$ for all $a \in L$. It is immediate that $\mathcal{T}$ is admissible. The resulting semifield multiplication via Construction~\ref{thm:general_structure} is then:
\[\mathbf{x} \circ \mathbf{y} = y_0 \mathbf{x}+ \eta y_1T(\mathbf{x}) +y_1^{\overline{\sigma}} \det(M_{T})^{\overline{\sigma}} T^{-1}(\mathbf{x}).  \]
A transformation $\mathbf{x} \mapsto T(\mathbf{x})$ yields the isotopic semifield multiplication
\[\mathbf{x} \circ \mathbf{y} = y_1^{\overline{\sigma}} \det(M_{T})^{\overline{\sigma}} \mathbf{x}+y_0 T(\mathbf{x})+ \eta y_1T^2(\mathbf{x})  \]
which is a twisted cyclic semifield after an isotopism mapping $y_1 \mapsto y_1^{\sigma}$. A comparison with the twisted cyclic fields in Theorem~\ref{thm:sheekey} shows that, after an additional twist that replaces $\eta y_1T^2(\mathbf{x})$ above with $\eta y_1^\tau T^2(\mathbf{x})$ for a suitably chosen $\tau$ yields the same constraints as in Theorem~\ref{thm:sheekey}. This construction thus gives \emph{exactly} the twisted cyclic semifields for $d=2$. We can thus view the twisted cyclic fields with parameter $d=2$ as a "trivial" case of our construction.

Similarly, the construction using Construction~\ref{thm:general_structure_2} just yields
\[\mathbf{x} \circ \mathbf{y} = y_0\mathbf{x}+y_1 T(\mathbf{x}),  \]
which is exactly a cyclic semifield. 

We conclude that this "trivial" admissible mapping does not yield any new examples.

\subsection{A characterization of two dimensional irreducible transformations}

In order to find new, non-trivial admissible mappings, we need to classify when semilinear mappings in $\Gamma L(2,L)$ are irreducible. Note that complete classifications of irreducible semilinear transformations exist, see~\cite{dempwolff2010irreducible}. The complete classification is however less concrete and assumes a certain "normal form". We however only require the comparatively simple case of transformations over a two-dimensional space. We thus give a simple, direct proof that gives an easy to check criterion.

\begin{proposition} \label{prop:irred}
	The transformation $T \in \Gamma L (2,L)$ with associated $M_T=\left(\begin{smallmatrix} 0 & \alpha \\ 1 & \beta \end{smallmatrix}\right)\in \GL(2,L)$ and field automorphism $\sigma$ is irreducible if and only if the polynomial $P(X)=X^{\sigma+1}-\beta X-\alpha \in L[X]$ has no roots in $L$.
\end{proposition}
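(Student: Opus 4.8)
The plan is to analyze invariant subspaces of $T$ directly. Over a two-dimensional space $L^2$, the only proper nontrivial invariant subspaces are the one-dimensional $L$-subspaces, i.e.\ lines $L\cdot\mathbf{v}$ for $\mathbf{v}\in L^2\setminus\{0\}$. So $T$ fails to be irreducible precisely when there is a nonzero vector $\mathbf{v}$ with $T(\mathbf{v})=\lambda \mathbf{v}$ for some $\lambda\in L$ (a "semilinear eigenvector"). First I would write $\mathbf{v}=\left(\begin{smallmatrix}v_1\\v_2\end{smallmatrix}\right)$ and unpack the equation $M_T\mathbf{v}^\sigma=\lambda\mathbf{v}$ with $M_T=\left(\begin{smallmatrix}0&\alpha\\1&\beta\end{smallmatrix}\right)$: this gives the two scalar equations $\alpha v_2^\sigma=\lambda v_1$ and $v_1^\sigma+\beta v_2^\sigma=\lambda v_2$.

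Next I would reduce to a single variable. Since $\mathbf{v}\neq 0$ and $\alpha\neq 0$ (as $M_T$ is invertible, $\det = -\alpha \neq 0$), one checks $v_2\neq 0$: if $v_2=0$ the first equation forces $\lambda v_1 = 0$, hence $\lambda = 0$ (as $v_1 \neq 0$), contradicting invertibility of $T$. So we may scale $\mathbf{v}$ so that $v_2=1$, and set $X:=v_1\in L$. The equations become $\alpha=\lambda v_1 = \lambda X$ and $X^\sigma+\beta=\lambda$. Substituting the second into the first yields $\alpha = (X^\sigma+\beta)X = X^{\sigma+1}+\beta X$, i.e.\ $X^{\sigma+1}-\beta X-\alpha=0$, which is exactly $P(X)=0$. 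Conversely, given a root $X\in L$ of $P$, setting $\mathbf{v}=\left(\begin{smallmatrix}X\\1\end{smallmatrix}\right)$ and $\lambda=X^\sigma+\beta$ produces a $T$-invariant line (note $\lambda\neq 0$ automatically, since $\lambda X=\alpha\neq 0$ forces $\lambda\neq 0$, and $T$ maps the line onto itself so it is genuinely invariant). This establishes the equivalence: $T$ has a proper invariant subspace $\iff$ $P$ has a root in $L$.

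The only genuine subtlety — and the step I would be most careful about — is the claim that the proper nontrivial $T$-invariant subspaces are exactly the $L$-lines. Here I would note that $T$ is $L$-semilinear, so if $W$ is $T$-invariant it need not be an $L$-subspace a priori; but "invariant subspace" in the definition of irreducibility should be read as $L$-subspace (this is the standard convention for $\Gamma L(d,L)$, consistent with the cyclic semifield construction, where $L^d$ is viewed as an $L$-space). An $L$-subspace of the $2$-dimensional $L$-space $L^2$ has $L$-dimension $0$, $1$, or $2$; dimensions $0$ and $2$ give $\{0\}$ and $L^2$, and dimension $1$ gives the lines. A $1$-dimensional $L$-subspace $L\cdot\mathbf{v}$ is $T$-invariant iff $T(\mathbf{v})\in L\cdot\mathbf{v}$, i.e.\ $T(\mathbf{v})=\lambda\mathbf{v}$ for some $\lambda\in L$ — which is the semilinear eigenvalue equation above. (One should observe that $T(\mathbf{v}) = \lambda \mathbf{v}$ does imply $T(c\mathbf{v}) = c^\sigma \lambda \mathbf{v} \in L\cdot\mathbf{v}$ for all $c\in L$, so the line really is invariant, and $\lambda\neq 0$ since $T$ is invertible.) Once this dictionary is in place, the computation in the previous paragraph finishes the proof.
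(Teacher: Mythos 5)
Your overall plan matches the paper's: irreducibility fails exactly when a one-dimensional $L$-subspace is $T$-invariant, i.e.\ when a semilinear eigenvector exists, and this reduces to a univariate equation. Your normalization $v_2=1$ is in fact cleaner than the paper's route through the intermediate variables $z=x_1^{\sigma-1}$ and $z'=-\alpha z/k$, and the preliminary remark on $L$-subspaces and on how scaling interacts with the eigenvalue equation is a careful point the paper leaves implicit.

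There is, however, a sign error in the reduction. With $v_2=1$ and $X=v_1$ you correctly obtain $\alpha=\lambda X$ and $\lambda=X^\sigma+\beta$, hence $\alpha=X^{\sigma+1}+\beta X$. This rearranges to $X^{\sigma+1}+\beta X-\alpha=0$, not to $X^{\sigma+1}-\beta X-\alpha=0$; the linear coefficient has the wrong sign, so this is not $P(X)=0$. As written the ``conversely'' step fails: if $P(X)=0$, i.e.\ $X^{\sigma+1}=\beta X+\alpha$, then with $\mathbf{v}=\left(\begin{smallmatrix}X\\1\end{smallmatrix}\right)$ and $\lambda=X^\sigma+\beta$ the first component of $T(\mathbf{v})$ equals $\alpha$, while $\lambda v_1=X^{\sigma+1}+\beta X=2\beta X+\alpha$, which is not $\alpha$ in general. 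The repair is minor: since $(-1)^{\sigma+1}=(-1)^{p^k+1}=1$ in every characteristic (trivially when $p=2$; for odd $p$ because $p^k+1$ is even), one has $P(-X)=X^{\sigma+1}+\beta X-\alpha$, so $P$ has a root in $L$ if and only if your polynomial does, and the statement is unaffected. Alternatively, parametrize the invariant line by $Y=-v_1/v_2$ instead of $v_1/v_2$; this gives $P(Y)=0$ directly and is, in effect, what the paper's substitution $z'=-\alpha z/k$ accomplishes. With that sign fixed your argument is complete.
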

\begin{proof}
	Let $\mathbf{0}\neq \mathbf{x}=\left(\begin{smallmatrix} x_0\\ x_1 \end{smallmatrix}\right) $ and assume $\langle \mathbf{x} \rangle$ is a 1-dimensional invariant subspace, i.e. $T(\mathbf{x})=k\mathbf{x}$ for some $k \in L^*$. Then
	\[ \begin{pmatrix}
		\alpha x_1^\sigma \\
		x_0^\sigma+\beta x_1^\sigma
	\end{pmatrix}= \begin{pmatrix}
		kx_0 \\
		kx_1
	\end{pmatrix}.\]
	Note that $x_0=0$ if and only if $x_1=0$, so let $x_0x_1\neq 0$. Eliminating $x_0$ from the second equation yields
	\[\left(\frac{\alpha}{k}\right)^\sigma x_1^{\sigma^2}+\beta x_1^\sigma-kx_1=0.\]
	Setting $z=x_1^{\sigma-1}$ this is equivalent to
		\[\left(\frac{\alpha}{k}\right)^\sigma z^{\sigma+1}+\beta z-k=0,\]
		and then setting further $z'=-\alpha z/k$ shows that this equation has no solution if and only $z'^{\sigma+1}-\beta z'-\alpha=0$ has none. So $T$ is irreducible if $P(X)$ has no root in $L$. If $z'^{\sigma+1}-\beta z'-\alpha=0$ has a solution, then we can choose a $k$ such that $z=-kz'/\alpha$ is a $(\sigma-1)$-st power, allowing us to track back the steps, and find $x_0,x_1$ satisfying $T(\mathbf{x})=k\mathbf{x}$, and thus constructing a non-trivial invariant subspace.
\end{proof}

Note that the polynomials $P(X)=X^{\sigma+1}-\beta X-\alpha \in L[X]$ have been extensively studied under the name of projective polynomials~\cite{abhyankar1997projective,Bluher}, for instance the number of projective polynomials with no roots in a finite field $L$, and thus the number of irreducible semilinear transformations in $\Gamma L (2,L)$ of the form in Proposition~\ref{prop:irred}, was explicitly determined in~\cite{Bluher}. In particular projective polynomials with no roots always exist, independent of the choice of $L$ and $\sigma$.

Note also that we only need to consider irreducible semilinear transformations in $\Gamma L (2,L)$ up to $\Gamma L (2,L)$-conjugacy since conjugate mappings will yield isotopic semifields in our constructions. This has been observed in the case of cyclic semifields by Kantor and Liebler~\cite{kantor2008semifields}, and the same reasoning still applies here. It is then clear that we can always choose a representative of the form in Proposition~\ref{prop:irred}.

%

\subsection{Some nontrivial admissible mappings}

We now present two nontrivial admissible mappings. We will need a simple and well known elementary lemma.
	\begin{lemma} \label{lem:gcd}
	Let $p$ be a prime, $k,l\geq 1$, $t=\gcd(k,l)$. Then 
	\begin{align*}
	\gcd(p^k-1,p^l-1)&=p^{t}-1,\\
	\gcd(p^k+1,p^l-1)&=\begin{cases}
				p^{d}-1, & l/t \text{ even },\\
				2, & l/t \text{ odd}, p>2, \\
				1,& l/t \text{ odd}, p=2,
			\end{cases}\\
	\gcd(p^k+1,p^l+1)&=\begin{cases}
				p^{d}+1, & l/t \text{ odd  and }k/t \text{ odd},\\
				2, & l/t \text{ even  or }k/t \text{ even, and }p>2, \\
				1, & l/t \text{ even  or }k/t \text{ even, and }p=2.
				\end{cases}
				\end{align*}
	\end{lemma}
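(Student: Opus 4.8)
This is the standard elementary number-theoretic lemma about $\gcd$'s of numbers of the form $p^k \pm 1$, and I would prove it by reducing everything to the first identity $\gcd(p^a-1,p^b-1)=p^{\gcd(a,b)}-1$. The plan is first to establish that identity, then bootstrap the two "$+1$" identities from it by writing $p^k+1 = \frac{p^{2k}-1}{p^k-1}$ and carefully tracking which factors survive.

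\textbf{Step 1: the base identity.} I would prove $\gcd(p^a-1,p^b-1)=p^{\gcd(a,b)}-1$ by the Euclidean-algorithm argument: if $a = qb+r$ then $p^a - 1 \equiv p^r - 1 \pmod{p^b-1}$ (since $p^b \equiv 1$), so $\gcd(p^a-1,p^b-1)=\gcd(p^r-1,p^b-1)$, and induction on the Euclidean algorithm for $(a,b)$ gives the claim; alternatively one notes $p^t-1 \mid p^a-1$ whenever $t \mid a$, and conversely any common divisor divides $p^{\gcd(a,b)}-1$ via a Bézout combination $\gcd(a,b)=ua-vb$. This handles the first line with $a=k$, $b=l$, $t=\gcd(k,l)$.

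\textbf{Step 2: the mixed identity $\gcd(p^k+1,p^l-1)$.} Write $D=\gcd(p^k+1,p^l-1)$. Since $D \mid p^{2k}-1$ and $D \mid p^l-1$, we get $D \mid \gcd(p^{2k}-1,p^l-1) = p^{\gcd(2k,l)}-1$. Now split on the parity of $l/t$: if $l/t$ is even then $2k/t$ and $l/t$ share the factor $2$ appropriately so that $\gcd(2k,l)=2\gcd(k,l)=2t$... — here I need to be careful: actually $\gcd(2k,l)$ equals $2t$ when $v_2(l)>v_2(k)$ and equals $t$ when... — the cleaner route is: $D \mid p^k+1$ forces $D \mid p^{2k}-1$ but $D$ is coprime to $p^k-1$ up to a factor of $2$ (since $\gcd(p^k+1,p^k-1)=\gcd(p^k+1,2)$), so really $D \mid \frac{p^{2k}-1}{p^k-1}\cdot(\text{small})$, and $D \mid p^l - 1$. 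When $l/t$ is even, $p^{2k}-1 \mid p^l - 1$ (because $2k \mid l$? no — because $2t \mid l$ and $2k/t$... ) — I will organize this by showing $p^t+1 \mid p^l-1$ exactly when $l/t$ is even, which follows since $p^l - 1 = (p^t)^{l/t}-1$ and $x+1 \mid x^n-1$ iff $n$ even; combined with $p^t+1 \mid p^k+1$ (true iff $k/t$ odd — wait, need $k/t$ odd here) the correct bookkeeping emerges. In the odd case $l/t$ odd, I show $D \mid 2$ by proving $D$ is coprime to $p$ and that any odd prime $q \mid D$ would give $\mathrm{ord}_q(p) \mid 2k$ but $\mathrm{ord}_q(p) \nmid k$ (else $q \mid p^k - 1$ and $q\mid p^k+1$ so $q \mid 2$), hence $\mathrm{ord}_q(p)$ has $v_2$ exactly one more than $v_2(k)$; but $q \mid p^l-1$ forces $\mathrm{ord}_q(p)\mid l$, and since $v_2(l)=v_2(t)=v_2(\gcd(k,l))\le v_2(k)$, contradiction. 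Then $D=1$ or $2$, and $D=2$ exactly when $p$ is odd (both $p^k+1$ and $p^l-1$ even) versus $D=1$ when $p=2$.

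\textbf{Step 3: $\gcd(p^k+1,p^l+1)$.} Same strategy: any odd prime $q$ dividing it has $\mathrm{ord}_q(p) \mid 2k$, $\mathrm{ord}_q(p)\mid 2l$, $\mathrm{ord}_q(p)\nmid k$, $\mathrm{ord}_q(p)\nmid l$, which forces $v_2(\mathrm{ord}_q(p)) = v_2(k)+1 = v_2(l)+1$, so $v_2(k)=v_2(l)$; writing $t=\gcd(k,l)$ this means $k/t$ and $l/t$ are both odd. Conversely when both $k/t,l/t$ are odd, $p^t+1 \mid p^k+1$ and $p^t+1 \mid p^l+1$ (using $x+1\mid x^n+1$ for odd $n$), and one checks $\gcd(p^k+1,p^l+1)=p^t+1$ exactly by showing the $q$-part for $q \mid p^t+1$ matches on both sides and no larger prime power survives — the upper bound $\gcd \mid p^{\gcd(2k,2l)}-1 = p^{2t}-1$ together with $p^t+1 \mid \gcd$ and $\gcd(p^t+1,p^t-1)\mid 2$ pins it down for odd $p$, with the factor-of-2 subtlety handled separately. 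If instead $l/t$ or $k/t$ is even, the order argument kills all odd primes, leaving $\gcd \in \{1,2\}$, equal to $2$ iff $p$ odd.

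\textbf{Main obstacle.} The only genuinely fiddly point is the $2$-adic bookkeeping: keeping straight exactly when $\gcd(2k,l)$ is $2t$ versus $t$, and ensuring the "factor of $2$" in $\gcd(p^k\pm1, p^k\mp1)\mid 2$ is attributed correctly so that the $p=2$ versus $p>2$ split is clean. I expect to phrase the whole argument prime-by-prime via $\mathrm{ord}_q(p)$, treating $q=2$ separately, which makes the logic uniform and sidesteps the $\gcd(2k,l)$ case analysis; everything else is a routine application of $x+1 \mid x^n-1 \iff n$ even and $x+1 \mid x^n+1 \iff n$ odd together with Step 1.
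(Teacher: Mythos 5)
The paper does not prove Lemma~\ref{lem:gcd}; it is stated without proof as a ``simple and well known elementary lemma,'' so there is no in-paper argument to compare yours against. Your plan is the standard, correct one: the Euclidean-algorithm-on-exponents argument for $\gcd(p^k-1,p^l-1)=p^t-1$, then a prime-by-prime analysis of the two mixed gcd's using $\mathrm{ord}_q(p)$ together with the facts $x+1\mid x^n-1 \iff n$ even and $x+1\mid x^n+1 \iff n$ odd. The central observation that an odd prime $q$ dividing $p^n+1$ has $v_2(\mathrm{ord}_q(p))=v_2(n)+1$, combined with $\mathrm{ord}_q(p)$ dividing whatever the other exponent forces, is exactly what kills (or retains) each odd prime, and the $v_2$ bookkeeping you describe closes every case, including the $p=2$ vs.\ $p>2$ split. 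Step~2 wanders a bit in the middle (you flirt with bounding by $p^{\gcd(2k,l)}-1$ and then back off), but the $\mathrm{ord}_q(p)$-and-valuation route you settle on is the clean one and does carry through; I see no gap.

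One thing worth surfacing: the lemma as printed has a typo. Both ``$p^{d}-1$'' in the $\gcd(p^k+1,p^l-1)$, $l/t$-even case and ``$p^{d}+1$'' in the $\gcd(p^k+1,p^l+1)$, both-quotients-odd case use an undefined symbol $d$; the intended value in both places is $p^{t}+1$ with $t=\gcd(k,l)$ (e.g.\ $\gcd(3^1+1,3^2-1)=4=3^1+1$, not $3^1-1$). Your Steps~2 and~3 correctly derive $p^t+1$, so you are proving the intended statement rather than the literal one.
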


\begin{proposition}\label{prop:admissible1}
	Define $\mathcal{T} \colon L \rightarrow \Gamma L (2,L)\cup\{0\}$ such that $\mathcal{T}(0)=0$ and for $a \neq 0$ define $\mathcal{T}(a) \in \Gamma L(2,L)$ with associated field automorphism $\sigma$ and associated matrix $M_a \in \GL(2,L)$ via
	\[M_a = \begin{pmatrix}
		0 & a \alpha\\
		a^{\tau} & 0
	\end{pmatrix}\]
	for an arbitrary, nontrivial field automorphism $\tau$. Write $\sigma \colon x \mapsto x^{p^k}$, $\tau \colon x \mapsto x^{p^l}$, $0\leq k,l<m$. Then $\mathcal{T}$ is admissible  if and only if either
	\begin{itemize}
		\item $\alpha$ is a non-square; and $k=0$ or $\gcd(m,l)/\gcd(m,k,l)$ is odd; or
		\item $k\neq 0$, $\alpha$ is not a $(p^{\gcd(m,k,l)}+1)$-st power and $\gcd(m,l)/\gcd(m,k,l)$ is even.
	\end{itemize}
\end{proposition}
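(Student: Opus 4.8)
The plan is to verify the three defining properties of an admissible mapping for the proposed $\mathcal{T}$, where the only nontrivial one is property (3), irreducibility of $\mathcal{T}(a)$ for all $a \neq 0$. Properties (1) and (2) are immediate: $\mathcal{T}(0)=0$ by fiat, and since $M_{a+b} = M_a + M_b$ holds entrywise (the entries $a\alpha$ and $a^\tau$ are additive in $a$, as $\tau$ is a field automorphism hence $\F_p$-linear), we get $\mathcal{T}(a)+\mathcal{T}(b)=\mathcal{T}(a+b)$ when all of $a,b,a+b$ are nonzero, and the degenerate cases follow from $\mathcal{T}(0)=0$. So the whole content is: for which choices of $\alpha, k, l$ is $\mathcal{T}(a)$ irreducible for every $a \in L^*$?

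For this I would apply Proposition~\ref{prop:irred}. The matrix $M_a = \left(\begin{smallmatrix} 0 & a\alpha \\ a^\tau & 0 \end{smallmatrix}\right)$ is not quite in the normal form $\left(\begin{smallmatrix} 0 & \alpha' \\ 1 & \beta' \end{smallmatrix}\right)$ of that proposition, so first I would conjugate it into that form (or, equivalently, rescale the basis vectors). Scaling the second coordinate by $a^\tau$ — i.e. conjugating $T$ by $\diag(1, a^\tau)$, which is an $L$-linear change of basis and hence preserves irreducibility and the associated automorphism $\sigma$ — should bring $M_a$ to the form $\left(\begin{smallmatrix} 0 & a\alpha (a^\tau)^{-\sigma} \\ 1 & 0 \end{smallmatrix}\right)$ (one must track how the off-diagonal entry transforms under a semilinear conjugation: the $(1,2)$ entry picks up a factor $(a^\tau)^{\sigma} \cdot (a^\tau)^{-1}$ or similar, which I would compute carefully). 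Thus by Proposition~\ref{prop:irred}, $\mathcal{T}(a)$ is irreducible iff the projective polynomial $X^{\sigma+1} - c_a = 0$, i.e. $X^{p^k+1} = c_a$, has no solution in $L$, where $c_a = a\alpha \cdot (a^\tau)^{\sigma - 1} = a \alpha a^{p^l(p^k-1)} = \alpha \, a^{1 + p^l(p^k - 1)}$ (up to the exact bookkeeping of inverses). So the problem reduces to: for which $\alpha$ is $\alpha \cdot a^{e}$ a non-$(p^k+1)$-th power in $L$ for every $a \in L^*$, where $e = 1 + p^l(p^k-1)$ (or the analogous exponent)?

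The core of the argument is then a coset computation in the cyclic group $L^* \cong \Z/(p^m-1)$. The set of $(p^k+1)$-st powers is the subgroup of index $g := \gcd(p^k+1, p^m-1)$, and I need: for all $a \in L^*$, $\alpha a^e$ lies outside this subgroup, equivalently the coset $\alpha (L^*)^{p^k+1}$ is disjoint from $\{a^e : a \in L^*\} \cdot (L^*)^{p^k+1} = (L^*)^{\gcd(e, p^m-1)} \cdot (L^*)^{p^k+1} = (L^*)^{\gcd(e,\, p^k+1,\, p^m-1)}$. This is possible at all only when $\gcd(e, p^k+1, p^m-1) < g$, i.e. when $\{a^e\}$ does not already cover the whole quotient $L^*/(L^*)^{p^k+1}$; and when it is possible, the admissible $\alpha$ are exactly those whose image in $L^*/(L^*)^{p^k+1}$ lies in no coset hit by the $a^e$. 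I would compute $\gcd(e, p^k+1, p^m-1)$ and $g = \gcd(p^k+1, p^m-1)$ using Lemma~\ref{lem:gcd}: with $d_0 := \gcd(m,k,l)$ and $d_1 := \gcd(m,l)$, one finds the relevant gcd's reduce to $p^{d_0} + 1$ or $p^{d_1}-1$ or $2$ or $1$ depending on the parities of $m/d_1$, $k/d_0$, $l/d_0$ etc. Carrying this through should show: when $k=0$ the condition becomes "$\alpha$ a non-square" (here $\sigma = \mathrm{id}$ so $p^k+1 = 2$); when $k \neq 0$ and $d_1/d_0$ is odd the binding quotient is again by squares so again "$\alpha$ a non-square" suffices (the extra factor $p^{d_0}+1$ or the norm condition collapses); and when $d_1/d_0$ is even the condition sharpens to "$\alpha$ not a $(p^{d_0}+1)$-st power". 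Matching the two stated bullet points would then be a matter of bookkeeping — separating $k=0$ from $k\neq 0$ and tracking whether $2$ divides the relevant index or whether $p^{d_0}+1$ does.

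The main obstacle I anticipate is the precise gcd bookkeeping in the last step: correctly identifying the exponent $e$ after the semilinear conjugation (the interaction of $\tau$ and $\sigma$ in the exponent is easy to get off by a power of $p$), and then disentangling the several cases of Lemma~\ref{lem:gcd} — in particular distinguishing when $\gcd(e,\, p^k+1,\, p^m-1)$ equals $p^{d_0}-1$, $p^{d_0}+1$, $2$, or $1$, and comparing it against $g$ — so that the two bullet conditions (non-square vs. non-$(p^{d_0}+1)$-st power) emerge cleanly, including checking that in the "non-square" regime such $\alpha$ giving irreducibility for \emph{all} $a$ simultaneously genuinely exist (i.e. the quotient by the $a^e$ is still a proper quotient, so a "forbidden" coset remains). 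A secondary subtlety is handling the automorphism-order / fixed-field constraints implicit in $\sigma, \tau$ being genuine automorphisms of $L=\F_{p^m}$ (e.g. $\tau$ nontrivial forces $l \not\equiv 0$, and $\gcd(m,l) < m$), which is what makes the "$\gcd(m,l)/\gcd(m,k,l)$" quantity well-defined and its parity meaningful.
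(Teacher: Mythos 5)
Your strategy matches the paper's: additivity is immediate, the real content is applying Proposition~\ref{prop:irred} to each $\mathcal{T}(a)$ and converting the ``for all $a\in L^*$'' quantifier into a coset condition in the cyclic group $L^*$ via Lemma~\ref{lem:gcd}. The paper skips the explicit conjugation: it reads off the irreducibility criterion $a^\tau X^{\sigma+1}-a\alpha=0$ directly from $M_a$ (implicitly using the easy extension of Proposition~\ref{prop:irred} to matrices with arbitrary $(2,1)$-entry $\gamma$, for which the condition is that $\gamma X^{\sigma+1}-\beta X-\alpha$ has no root), and then rewrites it as $X^{\sigma+1}=a^{1-\tau}\alpha$. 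That gives the exponent $1-\tau$ at once, so the forbidden set for $\alpha$ is visibly $G=L^{\sigma+1}L^{\tau-1}$, of index $\gcd(p^k+1,p^l-1,p^m-1)$, and the two bullet conditions drop out of Lemma~\ref{lem:gcd}.

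Your conjugation route is fine in principle but your exponent is miscomputed, and in a way that matters: for $D=\diag(1,a^\tau)$ one has $M_{T'}=D^{-1}M_a D^\sigma=\left(\begin{smallmatrix}0 & a^{1+\tau\sigma}\alpha\\ 1 & 0\end{smallmatrix}\right)$, so $e=1+p^{l+k}$, not $1-p^{l+k}$ nor $1+p^{l+k}-p^{l}$ as you write in the two places. This is a sign error, not an ``off by a power of $p$'' error, and it changes the gcd: for example $\gcd(1-p^{l+k},p^k+1)=\gcd(1+p^l,p^k+1)$ while $\gcd(1+p^{l+k},p^k+1)=\gcd(1-p^l,p^k+1)$, and these differ already for $p=3$, $k=l=1$. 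With the correct $e=1+p^{l+k}$ one recovers $\gcd(e,p^k+1,p^m-1)=\gcd(p^l-1,p^k+1,p^m-1)$, matching the paper. A last bookkeeping remark for your final step: when $\gcd(m,l)/\gcd(m,k,l)$ is even, also $m/\gcd(m,k,l)$ is even, so the subgroups of $(p^{\gcd(m,k,l)}-1)$-th powers and $(p^{\gcd(m,k,l)}+1)$-st powers of $L^*$ coincide; this reconciles the gcd computation (which naturally yields $p^{\gcd(m,k,l)}-1$) with the $+1$ exponent appearing in the second bullet of the statement.
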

	\begin{proof}
	$\mathcal{T}(a)+\mathcal{T}(b)=\mathcal{T}({a+b})$ holds for any $a,b \in L$ by construction. \\
	By Proposition~\ref{prop:irred}, $\mathcal{T}(a)$ is irreducible if and only if $a^\tau X^{\sigma+1}-a\alpha\neq 0$ has no solutions in $L$, this is equivalent to $X^{\sigma+1}\neq a^{1-\tau}\alpha $ having no solutions. Let $G=L^{\sigma+1}L^{\tau-1}$. Then $\mathcal{T}(a)$ is irreducible for all $a \in L^*$ if and only if $\alpha \notin G$. $G$ is precisely the set of $(\gcd(p^k+1,p^l-1,p^m-1))$-th powers. The result then follows from Lemma~\ref{lem:gcd}.
	
	
	%
	%
	%
	\end{proof}
	Note that the condition in Proposition~\ref{prop:admissible1} in particular always holds if $p$ is odd and $\alpha$ is a non-square. Furthermore, we can allow $\tau=\id$ in Proposition~\ref{prop:admissible1}, but then $M_a=a\left(\begin{smallmatrix}
		0 &  \alpha\\
		1 & 0
	\end{smallmatrix}\right)$ and the resulting admissible mappings are trivial admissible mappings for the fixed irreducible transformation $T \in \Gamma L (2,L)$ with $M_T=\left(\begin{smallmatrix}
		0 &  \alpha\\
		1 & 0
	\end{smallmatrix}\right)$.
	
	We now present another nontrivial admissible mapping:
	\begin{proposition} \label{prop:admissible2}
	Define $\mathcal{T} \colon L \rightarrow \Gamma L (2,L)\cup\{0\}$ such that $\mathcal{T}(0)=0$ and for all $a \neq 0$ let $\mathcal{T}(a) \in \Gamma L(2,L)$ with associated field automorphism $\sigma$ via
	\[M_a = \begin{pmatrix}
		0 & a \alpha\\
		a^{\sigma^2} & a^\sigma \beta
	\end{pmatrix}.\]
	Then 
	$\mathcal{T}$
	is admissible if and only if $P(X)=X^{\sigma+1}-\beta X -\alpha \in L[X]$ has no roots in $L$.
\end{proposition}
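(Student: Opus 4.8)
The plan is to verify the three defining conditions of an admissible mapping for the family $\{\mathcal{T}(a)\}_{a \in L}$ given by the matrices $M_a$. Condition (1), $\mathcal{T}(0) = 0$, holds by definition. Condition (2), additivity $\mathcal{T}(a) + \mathcal{T}(b) = \mathcal{T}(a+b)$, is where I would first check carefully: since each $\mathcal{T}(a)$ acts as $\mathbf{x} \mapsto M_a \mathbf{x}^\sigma$ with the \emph{same} automorphism $\sigma$, the sum $\mathcal{T}(a) + \mathcal{T}(b)$ acts as $\mathbf{x} \mapsto (M_a + M_b)\mathbf{x}^\sigma$, so it suffices that $M_a + M_b = M_{a+b}$ entrywise. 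The entries of $M_a$ are $a\alpha$, $a^{\sigma^2}$, and $a^\sigma\beta$; each of these is additive in $a$ because $x \mapsto x^\sigma$ and $x \mapsto x^{\sigma^2}$ are field automorphisms (hence $\F_p$-linear) and scaling by $\alpha$ or $\beta$ is additive. So condition (2) follows immediately, with the only subtlety being to note that the shared $\sigma$ is what makes the semilinear maps add "matrix-wise".

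The heart of the proof is condition (3): $\mathcal{T}(a)$ is irreducible in $\Gamma L(2,L)$ for every $a \in L^*$, and I claim this is equivalent to $P(X) = X^{\sigma+1} - \beta X - \alpha$ having no root in $L$ — independently of $a$. The strategy is to reduce $M_a$ to the normal form of Proposition~\ref{prop:irred}, namely $\left(\begin{smallmatrix} 0 & \alpha' \\ 1 & \beta' \end{smallmatrix}\right)$, by a $\Gamma L(2,L)$-conjugacy (which preserves irreducibility, as already noted in the paper), and then track how $\alpha, \beta$ transform. Concretely, one conjugates $T = \mathcal{T}(a)$ by the diagonal linear map $D = \diag(1, a^{\sigma^2})$ (or a similar scaling chosen to clear the $(2,1)$-entry to $1$); since conjugating a semilinear map $\mathbf{x} \mapsto M\mathbf{x}^\sigma$ by a linear map $N$ gives $\mathbf{x} \mapsto N M (N^{-1})^\sigma \mathbf{x}^\sigma$, the new matrix is $N M_a (N^{-1})^\sigma$. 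A short computation should show the resulting normal-form parameters are, up to $(\sigma-1)$-st-power factors coming from $a$, exactly $\alpha$ and $\beta$ — or more precisely, that the associated projective polynomial for $\mathcal{T}(a)$ is $a^{\sigma^2}X^{\sigma+1} - a^\sigma\beta X - a\alpha$, which after dividing by $a^{\sigma^2}$ and substituting $X \mapsto a^{\sigma-1}X$ (a bijection on $L$ since $a \neq 0$) becomes a scalar multiple of $X^{\sigma+1} - \beta X - \alpha$. This is the same elimination trick used inside the proof of Proposition~\ref{prop:irred}: write the condition $T(\mathbf{x}) = k\mathbf{x}$ for a $1$-dimensional invariant subspace, eliminate one coordinate, substitute $z = x_1^{\sigma-1}$, and absorb the $a$-dependence into the choice of $k$.

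The main obstacle I anticipate is bookkeeping the $a$-dependence and confirming that it genuinely cancels, i.e. that the irreducibility criterion is uniform in $a \in L^*$. The cleanest way is probably to directly invoke Proposition~\ref{prop:irred} after normalizing: $\mathcal{T}(a)$ has invariant line $\langle \mathbf{x}\rangle$ iff $\left(\begin{smallmatrix} a\alpha x_1^\sigma \\ a^{\sigma^2}x_0^\sigma + a^\sigma\beta x_1^\sigma\end{smallmatrix}\right) = k\left(\begin{smallmatrix}x_0\\x_1\end{smallmatrix}\right)$; eliminating $x_0$ and setting $z = x_1^{\sigma-1}$ yields an equation of the shape $c_1 z^{\sigma+1} + c_2 z + c_3 = 0$ whose solvability (over the full multiplicative group, after the $(\sigma-1)$-power slack absorbed into $k$) is equivalent to $P$ having a root in $L$, with all the $a$'s folding into constants that can be rescaled away because $a \ne 0$. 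One must double-check the edge cases $x_0 x_1 = 0$ (forced to be $\mathbf 0$ since $\alpha \ne 0$, as $\alpha \in L^*$ from $M_a \in \GL(2,L)$) and that $M_a$ is indeed invertible, which requires $\alpha \neq 0$ — worth stating explicitly. Assembling these, condition (3) is equivalent to $P(X)$ having no root in $L$, and combined with (1) and (2) this gives the claimed characterization.
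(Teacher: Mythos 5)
Your proposal follows essentially the same route as the paper: additivity of $\mathcal{T}$ from additivity of the matrix entries, then reduce irreducibility to the condition in Proposition~\ref{prop:irred} and show the $a$-dependence washes out by a change of variable. One small but real slip: the substitution you state, $X \mapsto a^{\sigma-1}X$, goes the wrong way. After dividing $a^{\sigma^2}X^{\sigma+1}-a^\sigma\beta X-a\alpha$ by $a^{\sigma^2}$ and then setting $X \mapsto a^{\sigma-1}X$, the coefficients pick up factors $a^{2\sigma-2\sigma^2}$ and $a^{2-2\sigma^2}$, which do \emph{not} reduce to a scalar multiple of $P(X)=X^{\sigma+1}-\beta X-\alpha$. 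The correct substitution is $X \mapsto X/a^{\sigma-1}$ (equivalently $X\mapsto a^{1-\sigma}X$), which sends $a^{\sigma^2}X^{\sigma+1}-a^\sigma\beta X-a\alpha$ to $a\bigl(X^{\sigma+1}-\beta X-\alpha\bigr)=aP(X)$, exactly as in the paper; both substitutions are bijections of $L$ for $a\neq 0$, but only one of them produces $P$ up to a nonzero scalar. Your alternative suggestion of first conjugating $\mathcal{T}(a)$ into the normal form of Proposition~\ref{prop:irred} also works in principle (scaling $M_a$ by $1/a^{\sigma^2}$ and noting scalar multiples of a semilinear map have the same invariant lines is enough; a genuine $\Gamma L(2,L)$-conjugation is not needed), though your specific $D=\diag(1,a^{\sigma^2})$ does not in fact normalize the $(2,1)$-entry to $1$. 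These are easy fixes, and the overall argument is sound and matches the paper's.
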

	\begin{proof}
	Again, $\mathcal{T}(a)+\mathcal{T}(b)=\mathcal{T}({a+b})$ holds for any $a,b \in L$ by construction, and again by Proposition~\ref{prop:irred}, $\mathcal{T}(a)$ is irreducible if and only if $a^{\sigma^2}X^{\sigma+1}-a^\sigma \beta X - a \alpha \in L[X]$ has no roots in $L$. A transformation $X \mapsto X/(a^{\sigma-1})$ leads to $aX^{\sigma+1}-a \beta X- a \alpha \in L[X]$ which clearly has no roots if and only if $P(X)$ has no roots.
	\end{proof}

	\begin{remark}
		Note that the admissible mappings constructed in Propositions~\ref{prop:admissible1} and \ref{prop:admissible2} coincide if we choose $\tau=\sigma^2$ in Proposition~\ref{prop:admissible1} and $\beta=0$ in Proposition~\ref{prop:admissible2}. 
	\end{remark}
	
\section{Recognizing known constructions} \label{sec:known}

In this section, we consider known semifields families that produce semifields of size $p^{2m}$ for infinitely many $p,m$. Most of these constructions were found using ad hoc methods, and the relations between them was unclear. Our framework will explain many of them. 

Table~\ref{t:1} gives a succinct list of known constructions that can be explained using our constructions.

Note that this explicitly excludes constructions of Knuth-Kantor~\cite{Kantor03,KW}, Cohen-Ganley~\cite{CG}, and Ganley~\cite{Ganley}, that produce semifields only in characteristic $2$ or $3$. We also exclude the finite fields (which are trivially semifields) as well as Albert's twisted field~\cite{Albert61}, which yield semifields of order $p^n$ for any $n$, i.e., also in odd extensions. 

As the table shows, our constructions cover (depending on one wants to count) around ten earlier infinite families of semifields. We want to emphasize that our constructions thus cover the oldest known family of semifields (the Dickson semifields), the semifields with $2$ nuclei that are as large as possible~\cite{Knuth65} (the Knuth semifields), as well as the largest known family of inequivalent semifields in odd characteristic overall~\cite{golouglu2023exponential} (the Taniguchi semifields).  The remainder of this section is devoted to proving the contents of Table~\ref{t:1}.

\begin{table}[!ht] 
\noindent\begin{center} 
{\scriptsize
\begin{tabular}{|c|c|c|c|c|c|} 
\hline 
\textbf{Family} &  \textbf{Construction} & \textbf{Admissible Mapping}  &  \textbf{Reference} & \textbf{Notes}\\
\hline 
(Generalized) Dickson & Construction~\ref{thm:general_structure_2} &Proposition~\ref{prop:admissible1} & \cite{dickson1906commutative,Knuth65} & ---\\
\hline 
Knuth I & Construction~\ref{thm:general_structure_2} & Proposition~\ref{prop:admissible2} &  \cite{Knuth65} &---\\
\hline 
Knuth II,III,IV, & Construction~\ref{thm:general_structure_2} & trivial & \cite{Knuth65}&--- \\
Hughes-Kleinfeld & && \cite{hughes1960seminuclear} &\\
\hline 
Bierbrauer, & Construction~\ref{thm:general_structure} & trivial & \cite{bartoli2017family,Bierbrauer16,BH} &Contains commutative semifields\\
Budaghyan-Helleseth & & & & \\
\hline 
Dempwolff & Construction~\ref{thm:general_structure} & trivial & \cite{dempwolff2013more} & Unnecessary conditions in~\cite{dempwolff2013more}\\
\hline 
Zhou-Pott & Construction~\ref{thm:general_structure}  &Proposition~\ref{prop:admissible1} & \cite{ZP13} & For parameter $\eta=-1$ \\
\hline
Taniguchi &  Construction~\ref{thm:general_structure}  &Proposition~\ref{prop:admissible2} & \cite{taniguchisf} &Largest known construction for $p$ odd \\
\hline
(Twisted) cyclic semifields & Constructions~\ref{thm:general_structure},~\ref{thm:general_structure_2} & trivial & \cite{sheekey2020new} &--- \\
\hline
\end{tabular} 
}
\end{center}
\caption{Known infinite families of semifields of order $p^{2m}$ and how to recreate them using Constructions~\ref{thm:general_structure} and \ref{thm:general_structure_2}} \label{t:1}
\end{table}

\subsection{Generalized Dickson semifields}
The first proper family of semifields (i.e. semifields that are not isotopic to fields) that was found was the family of \emph{Dickson semifields} in 1906~\cite{dickson1906commutative}. It was generalized by Knuth in 1965~\cite{Knuth65} to the following family, see also~\cite[p.241]{dembowski1997finite}. 	Let $\sigma,\tau,\rho$ be field automorphisms of $L$ defined by $\sigma \colon x \mapsto x^{p^k}$, $\tau \colon x \mapsto x^{p^l}$, $\rho \colon x \mapsto x^{p^r}$. Let further $\alpha \notin L^{p^r+1}L^{p^k+1}L^{p^t-1}$. Then 
\[\begin{pmatrix}
	x_0 \\
	x_1
\end{pmatrix} \circ_D\begin{pmatrix}
	y_0 \\
	y_1
\end{pmatrix} = \begin{pmatrix}
	x_0y_0+\alpha x_1^\rho y_1^\tau\\ x_0^\sigma y_1+x_1y_0
\end{pmatrix}=\begin{pmatrix}
	f(x_0,x_1,y_0,y_1)\\ g(x_0,x_1,y_0,y_1)
\end{pmatrix}\]
defines a semifield $\S$. The original semifields by Dickson choose $\sigma=\id$ and $\rho=\tau$ and are the only commutative semifields in the family. 


We now compute the transpose $\pi_2(\S)$ of $\S$ by computing the dual spread. This is a pure computation that we need multiple times throughout the paper; we will explain the steps once. We roughly follow a technique introduced by Kantor~\cite{Kantor03}, see also~\cite{ball2004six,ball2007hughes}. 

The elements from the spread defined by the semifield are 
\[ A_{y_0,y_1}=\{(x_0,x_1,f(x_0,x_1,y_0,y_1),g(x_0,x_1,y_0,y_1)) \colon x_0,x_1 \in L\}\]
for $y_0,y_1 \in L$ and $\{(0,0,x_0,x_1)\colon x_0,x_1 \in L\}$. We use the alternating form 
\[((u,v,w,z)(a,b,c,d))=\Tr(cu+dv-aw-bz)\]
to determine the dual spread. Here $\Tr$ denotes the absolute trace. The dual spread is then the union of the subspaces
\[A^\perp_{y_0,y_1}=\{(a,b,c,d) \colon \Tr(cx_0+dx_1-af(x_0,x_1,y_0,y_1)-bg(x_0,x_1,y_0,y_1))=0 \text{ for all } x_0,x_1 \in L\}\]
for $y_0,y_1 \in L$ together with $\{(0,0,x_0,x_1)\colon x_0,x_1 \in L\}$.
We evaluate $\Tr(cx_0+dx_1-af(x_0,x_1,y_0,y_1)-bg(x_0,x_1,y_0,y_1))=0$ for $x_0=0$:
\begin{align*}
	0&=\Tr(dx_1-af(0,x_1,y_0,y_1)-bg(0,x_1,y_0,y_1)) \\
&=\Tr(dx_1-a(\alpha x_1^\rho y_1^\tau)-bx_1y_0) \\
&=\Tr(x_1(d-(a\alpha)^{\overline{\rho}}y_1^{\tau \overline{\rho}}-by_0)). 
\end{align*} 
Since this has to hold for all $x_1\in L$, this necessitates 
\[d=(a\alpha)^{\overline{\rho}}y_1^{\tau \overline{\rho}}+by_0.\]
Similarly, evaluating $\Tr(cx_0+dx_1-af(x_0,x_1,y_0,y_1)-bg(x_0,x_1,y_0,y_1))=0$ for $x_1=0$ yields:
\begin{align*}
	0&=\Tr(cx_0-af(x_0,0,y_0,y_1)-bg(x_0,0,y_0,y_1)) \\
&=\Tr(cx_0-ax_0y_0-bx_0^\sigma y_1) \\
&=\Tr(x_0(c-ay_0-(by_1)^{\overline{\sigma}})). 
\end{align*} 
Again this leads to
\[c=ay_0+(by_1)^{\overline{\sigma}}.\]
So the dual spread is constituted by the elements
\[A^\perp_{y_0,y_1}=\{(a,b,ay_0+(by_1)^{\overline{\sigma}},(a\alpha)^{\overline{\rho}}y_1^{\tau \overline{\rho}}+by_0)\colon a,b \in L\}.\] 
The corresponding transposed semifield $\pi_2(\S)$ is then defined by 
\[\begin{pmatrix}
	x_0 \\
	x_1
\end{pmatrix} \circ_D^t\begin{pmatrix}
	y_0 \\
	y_1
\end{pmatrix} = \begin{pmatrix}
	x_0y_0+(x_1y_1)^{\overline{\sigma}}\\ (\alpha x_0)^{\overline{\rho}}y_1^{\tau \overline{\rho}}+x_1y_0
\end{pmatrix}.\]

We can spot some intersections with the semifields we constructed via Construction~\ref{thm:general_structure_2} using the admissible mapping from Proposition~\ref{prop:admissible1}. Indeed, if $\rho=\sigma$, we have
\[\begin{pmatrix}
	x_0 \\
	x_1
\end{pmatrix} \circ_D^t\begin{pmatrix}
	y_0 \\
	y_1
\end{pmatrix} = y_0\begin{pmatrix}
	x_0\\x_1
\end{pmatrix} + \begin{pmatrix}
	0 & y_1^{\overline{\sigma}} \\
	\alpha^{\overline{\sigma}} y_1^{\tau \overline{\sigma}} & 0 
\end{pmatrix}\begin{pmatrix}
	x_0^{\overline{\sigma}}\\x_1^{\overline{\sigma}}
\end{pmatrix}.\]

This is clearly (after transformations $y_1 \mapsto y_1^\sigma$  and dividing by $\alpha^{\overline{\sigma}}$) isotopic to Construction~\ref{thm:general_structure_2} using the admissible mapping from Proposition~\ref{prop:admissible1}. Moreover, the original Dickson semifield ($\sigma=\id, \tau=\rho$) yields 
\[\begin{pmatrix}
	x_0 \\
	x_1
\end{pmatrix} \circ_D^t\begin{pmatrix}
	y_0 \\
	y_1
\end{pmatrix} = x_1\begin{pmatrix}
	y_1\\y_0
\end{pmatrix} + \begin{pmatrix}
	0 & x_0 \\
	\alpha^{\overline{\rho}} x_0^{\overline{\rho}} & 0 
\end{pmatrix}\begin{pmatrix}
	y_1\\y_0
\end{pmatrix},\]
so it is again isotopic to semifields from Construction~\ref{thm:general_structure_2} with an admissible mapping from Proposition~\ref{prop:admissible1}. 

Summarizing, we observe that Construction~\ref{thm:general_structure_2} with Proposition~\ref{prop:admissible1} is (via the Knuth orbit) contained in the generalized Dickson semifields. In particular, our construction contains the original Dickson semifield found in 1906.

\subsection{Knuth' semifields quadratic over a weak nucleus}

In 1965, Knuth~\cite{Knuth65} provided four families of semifields of order $p^{2m}$, the so-called \emph{semifields quadratic over a weak nucleus}. 

\begin{definition}[The Knuth semifields I - IV] \label{def:Knuth}
	Let $V=L^2$. Let $\id \neq \sigma$ be a field automorphism of $L$  and $P(X)=X^{\sigma+1}-\beta X-\alpha \in L[X]$ be a polynomial with no roots in $L$.  The following four operations then yield semifield multiplications on $V$:
	\begin{enumerate}[label=\Roman*]
		\item $\begin{pmatrix}x_0 \\ x_1 \end{pmatrix} \circ_{K_1} \begin{pmatrix}y_0 \\ y_1 \end{pmatrix} = \begin{pmatrix}x_0y_0+\alpha x_1^{\sigma} y_1^{\overline{\sigma}^2}\\ x_1y_0+x_0^{{\sigma}} y_1+\beta x_1^{{\sigma}} y_1^{\overline{\sigma}}\end{pmatrix}$,
		\item $\begin{pmatrix}x_0 \\ x_1 \end{pmatrix} \circ_{K_2} \begin{pmatrix}y_0 \\ y_1 \end{pmatrix} =\begin{pmatrix}x_0y_0+\alpha x_1^{{\sigma}} y_1\\  x_1y_0+x_0^{{\sigma}} y_1+\beta x_1^{{\sigma}} y_1\end{pmatrix} $,
		\item $\begin{pmatrix}x_0 \\ x_1 \end{pmatrix} \circ_{K_3} \begin{pmatrix}y_0 \\ y_1 \end{pmatrix} =\begin{pmatrix}x_0y_0+\alpha x_1^{\overline{\sigma}} y_1^{\overline{\sigma}^2}\\  x_1y_0+x_0^{{\sigma}} y_1+\beta x_1 y_1^{\overline{\sigma}}\end{pmatrix}$,
		\item $\begin{pmatrix}x_0 \\ x_1 \end{pmatrix} \circ_{K_4} \begin{pmatrix}y_0 \\ y_1 \end{pmatrix} =\begin{pmatrix}x_0y_0+\alpha x_1^{\overline{\sigma}} y_1\\  x_1y_0+x_0^{{\sigma}} y_1+\beta x_1 y_1\end{pmatrix}.$
	\end{enumerate}
\end{definition}
The Knuth II semifields were found already in 1960 and are often also called the Hughes-Kleinfeld semifields~\cite{hughes1960seminuclear}. It is well known (and confirmed by a simple calculation) that the Hughes-Kleinfeld/Knuth II semifields are actually just cyclic semifields. In particular, we can view them also as the semifields constructed using Construction~\ref{thm:general_structure} using the trivial admissible mapping. Knuth III and IV semifields are in the same Knuth orbit as (and thus equivalent to) the Knuth II semifields, as shown in~\cite{ball2007hughes}.

It remains to investigate the Knuth I semifields. It is immediate to verify that the Knuth I multiplication is (after taking $y_1 \mapsto y_1^{\sigma^2}$) precisely
\[\begin{pmatrix}
	x_0 \\
	x_1
\end{pmatrix} \circ_{K_1}\begin{pmatrix}
	y_0 \\
	y_1
\end{pmatrix} = y_0\begin{pmatrix}
	x_0\\x_1
\end{pmatrix} + \begin{pmatrix}
	0 & y_1\alpha \\
	y_1^{\sigma^2} & y_1^\sigma \beta 
\end{pmatrix}\begin{pmatrix}
	x_0^\sigma\\x_1^\sigma
\end{pmatrix},\]
which is exactly Construction~\ref{thm:general_structure} using the admissible mapping from Proposition~\ref{prop:admissible2}.\\

We conclude that all Knuth semifields quadratic over a weak nucleus are covered by our constructions.

\subsection{Bierbrauer semifields}

We now consider the semifields introduced by Bierbrauer~\cite{Bierbrauer16} in 2016 for odd characteristic, with generalizations to even characteristic in~\cite{bartoli2017family}. The semifield is again defined on $L^2$ where $L$ is a finite field and $\sigma$ is a field autormorphism of $L$. Then
\begin{equation*}
	\begin{pmatrix}x_0 \\ x_1 \end{pmatrix} \circ_{B} \begin{pmatrix}y_0 \\ y_1 \end{pmatrix} = \begin{pmatrix}\delta x_1y_1^\sigma+\gamma x_0y_1^\sigma + \beta x_1y_0^\sigma + \alpha x_0y_0^\sigma + \eta(\delta x_1^\sigma y_1-\gamma x_1^\sigma y_0-\beta x_0^\sigma y_1 + \alpha x_0^\sigma y_0) \\
	x_0y_1+x_1y_0\end{pmatrix}
\end{equation*}
 is a semifield if $\eta$ is not a $(\sigma-1)$-st power and $P(X)=\delta X^{\sigma +1} + \gamma X^\sigma - \beta X-\alpha \in L[X]$ has no zero in $L$ with $\delta \neq 0$. By scaling, we can assume without loss of generality that $\delta=1$. Further, by transformations $x_1 \mapsto x_1-\gamma x_0$, $y_1 \mapsto y_1+ \gamma y_0$ we reach a semifield with parameter $\gamma=0$. If $\beta \neq 0$ we can further do a transformation $x_1 \mapsto x_1\beta^{\overline{\sigma}}$, $y_1 \mapsto y_1\beta^{\overline{\sigma}}$ to achieve $\beta=1$. Summarizing, we only need to consider $\delta=1,\gamma=0,\beta \in \{0,1\}$, resulting in:

\begin{equation}
	\begin{pmatrix}x_0 \\ x_1 \end{pmatrix} \circ_{B} \begin{pmatrix}y_0 \\ y_1 \end{pmatrix} = \begin{pmatrix}x_1y_1^\sigma+ \beta x_1y_0^\sigma + \alpha x_0y_0^\sigma + \eta( x_1^\sigma y_1-\beta x_0^\sigma y_1 + \alpha x_0^\sigma y_0) \\
	x_0y_1+x_1y_0\end{pmatrix}.
\label{eq:bb2}
\end{equation}

We again compute the transpose of this semifield using the same technique we showed earlier and arrive at

\begin{equation}
		\begin{pmatrix}x_0 \\ x_1 \end{pmatrix} \circ_{B}^t \begin{pmatrix}y_0 \\ y_1 \end{pmatrix} = \begin{pmatrix} \alpha x_0y_0^\sigma + \eta^{\overline{\sigma}}(-\beta^{\overline{\sigma}}  (x_0y_1)^{\overline{\sigma}} + \alpha^{\overline{\sigma}} (x_0y_0)^{\overline{\sigma}})+x_1y_1 \\
	x_0y_1^\sigma + \beta x_0y_0^\sigma + \eta^{\overline{\sigma}} (x_0y_1)^{\overline{\sigma}}+x_1y_0 \end{pmatrix}.
\label{eq:trans_BB}
\end{equation}
Let now $\mathbf{y}=\left(\begin{smallmatrix}y_1 \\ y_0\end{smallmatrix}\right)$ and $T \in \Gamma L (2, L)$ with associated field automorphism $\sigma$ and $M_T=\left(\begin{smallmatrix}	0 & \alpha \\
	1 & \beta\end{smallmatrix}\right)$.
Then Eq.~\eqref{eq:trans_BB} is exactly
\begin{align*}
\begin{pmatrix}x_0 \\ x_1 \end{pmatrix} \circ_{B}^t \begin{pmatrix}y_0 \\ y_1 \end{pmatrix} &= x_1 \begin{pmatrix}
	y_1 \\ y_0
\end{pmatrix}+x_0\begin{pmatrix}
	0 & \alpha \\
	1 & \beta
\end{pmatrix} \begin{pmatrix}
	y_1^{{\sigma}} \\ y_0^{{\sigma}}
\end{pmatrix}+
x_0\eta^{\overline{\sigma}}  \begin{pmatrix}
	-\beta^{\overline{\sigma}} & \alpha^{\overline{\sigma}} \\
	1 & 0
\end{pmatrix} \begin{pmatrix}
	y_1^{\overline{\sigma}} \\ y_0^{\overline{\sigma}} 
\end{pmatrix} \\&= x_1 \mathbf{y} + x_0T(\mathbf{y}) + x_0 \eta^{\overline{\sigma}} \det(M_T)^{\overline{\sigma}}  T^{-1} (\mathbf{y}).
\end{align*}

It is easy to see that (after minor transformations) this is exactly the dual of  Construction~\ref{thm:general_structure} using the trivial admissible mapping. As our discussions on the trivial admissible mapping show, this means in particular that the Bierbrauer semifields are contained (via Knuth orbit and isotopy) in the family of twisted cyclic semifields. This has interesting consequences: It was shown~\cite{Bierbrauer16} that the Bierbrauer semifields contain a family of \emph{commutative} semifields, namely the Budaghyan-Helleseth semifields. This means that the twisted cyclic semifields (via Knuth orbit) contain these commutative semifields as well. Commutative semifields have received special attention: Few constructions are known, and (outside of purely algebraic interest), they lead to the construction of symplectic spread sets (see e.g.~\cite{Kantor03}), rank-metric codes with symmetric matrices, as well as so called planar or perfect nonlinear functions (see~\cite{coulterhenderson}) which have applications in cryptography~\cite{blondeau2015perfect}. Finding commutative semifields is not easy: Since isotopy does not preserve commutativity, it is generally difficult to determine if a family of semifields is isotopic to a \emph{commutative} one. Locating the commutative Budaghyan-Helleseth semifields in this non-trivial way in the family of twisted cyclic semifields leads to the following natural question:

\begin{problem}\label{prob}
	Investigate if the twisted cyclic fields contain more commutative semifields (via isotopy and Knuth orbit).
\end{problem}

\subsection{Dempwolff semifields}

We now consider the semifields introduced by Dempwolff~\cite[Theorem B]{dempwolff2013more} in 2013. The semifield is defined on $L^2$ where $L$ is a finite field of odd characteristic, $\sigma$ is a field autormorphism of $L$ with fixed field $K$, $\tau$ a field automorphism with fixed field $K'\leq K$, $\alpha$ is a non-square and $\alpha$ satisfies $N_{L\colon K}(\eta)\neq 1$. Then
\begin{equation*}
\begin{pmatrix}x_0 \\ x_1 \end{pmatrix} \circ_{De} \begin{pmatrix}y_0 \\ y_1 \end{pmatrix} = \begin{pmatrix} x_0y_0+x_1y_1^\sigma-\eta x_1^\tau y_1^{\overline{\sigma}} \\ x_0y_1+\alpha (x_1y_0^\sigma-\eta x_1^\tau y_0^{\overline{\sigma}}) \end{pmatrix}
\end{equation*}
is a semifield. This can easily be rewritten as, setting $\mathbf{y}=\left(\begin{smallmatrix}y_0 \\ y_1\end{smallmatrix}\right)$
\begin{equation*}
\begin{pmatrix}x_0 \\ x_1 \end{pmatrix} \circ_{De} \begin{pmatrix}y_0 \\ y_1 \end{pmatrix} = x_0 \mathbf{y}+x_1 T(\mathbf{y})+\eta x_1^\tau T^{-1}(\mathbf{y})
\end{equation*}
for $T \in \Gamma L(2,L)$ with associated field automorphism $\sigma$ and $M_T=\left(\begin{smallmatrix}	0 & 1 \\
	\alpha & 0\end{smallmatrix}\right)$. 
This is covered by Construction~\ref{thm:general_structure} with the trivial admissible mapping, after the additional twist we explained in Section~\ref{sec:admissible}.  This shows again the strength of our framework: $T$ could have been chosen more generally as \emph{any} irreducible semilinear transformation in $\Gamma L(2,L)$ (see our classification in Section~\ref{sec:admissible}), instead of the simple choice here. In particular, there is no reason to restrict ourselves to odd characteristic, and some further unnecessary restrictions in~\cite[Theorem B]{dempwolff2013more} on the order of $\sigma$ can also be removed.

\subsection{Zhou-Pott's commutative semifields}

The following family of commutative semifields was found by Zhou and Pott in 2013~\cite{ZP13}. Let $L$ be a field of odd characteristic; and $\sigma,\tau$ be field automorphisms of $L$ where $\sigma$ has odd order. Let further $\alpha$ be a non-square. Then
\begin{equation*}
\begin{pmatrix}x_0 \\ x_1 \end{pmatrix} \circ_{ZP} \begin{pmatrix}y_0 \\ y_1 \end{pmatrix}  = \begin{pmatrix} x_0^\sigma y_0+x_0y_0^\sigma +\alpha (x_1^\sigma y_1+x_1y_1^\sigma )^\tau \\
	x_0y_1+x_1y_0 \end{pmatrix},
\end{equation*}
defines a (commutative) semifield multiplication. We compute the transpose of this semifield (again using the techniques described earlier):

\begin{equation}
\begin{pmatrix}x_0 \\ x_1 \end{pmatrix} \circ_{ZP}^t \begin{pmatrix}y_0 \\ y_1 \end{pmatrix}  = \begin{pmatrix}x_0^{\overline{\sigma}}y_0^{\overline{\sigma}}+x_0y_0^\sigma +x_1y_1 \\
\alpha^{ \overline{\tau\sigma}}x_0^{\overline{\tau\sigma}}y_1^{\overline{\sigma}}+\alpha^{\overline{\tau}}x_0^{\overline{\tau}}y_1^{\sigma}+x_1y_0\end{pmatrix}
\label{eq:trans}
\end{equation}

We rewrite this, setting $\mathbf{y}=\left(\begin{smallmatrix}y_1 \\ y_0\end{smallmatrix}\right)$,

\begin{equation}
\begin{pmatrix}x_0 \\ x_1 \end{pmatrix} \circ_{ZP}^t \begin{pmatrix}y_0 \\ y_1 \end{pmatrix}   = x_1\mathbf{y} - T_{-x_0}(\mathbf{y}) + \det(M_{T_{-x_0}})^{\overline{\sigma}}T^{-1}_{-x_0}(\mathbf{y}) 
\label{eq:zp}
\end{equation}

where $T_{x_0}\in \Gamma L (2,L)$ for $x_0 \neq 0$ has associated field automorphism $\sigma$ and 
\[M_{T_{x_0}} = \begin{pmatrix}
	 0 & x_0 \\
	(\alpha x_0 )^{\overline{\tau}} & 0
\end{pmatrix}.\]
So the transpose of the Zhou-Pott semifields is isotopic to Construction~\ref{thm:general_structure} with the admissible mapping from Proposition~\ref{prop:admissible1} and $\eta=-1$. It is easy to check that the conditions of Construction~\ref{thm:general_structure} then coincide with the conditions of the Zhou-Pott semifields. In Section~\ref{sec:new} we will investigate the semifields that arise when we choose $\eta \neq -1$. In particular, this allows us to find new semifields in even characteristic. 

\subsection{Taniguchi's semifields}

In 2019, Taniguchi~\cite{taniguchisf} provided the following family of semifields. Let $L$ be a finite field, $\sigma$ be a field automorphism, $\eta$ is not $(\sigma-1)$-st power and $X^{\sigma+1}-\beta' X-\alpha' \in L[X]$ has no root in $L$. Then
\[\begin{pmatrix}x_0 \\ x_1 \end{pmatrix} \circ_{T} \begin{pmatrix}y_0 \\ y_1 \end{pmatrix}  = \begin{pmatrix}
	(x_0^\sigma y_0-\eta x_0y_0^\sigma)^{\sigma^2}+\beta'(x_0^\sigma y_1+\eta y_0^\sigma x_1)^\sigma+\alpha'(x_1^\sigma y_1-\eta x_1y_1^\sigma) \\
	x_1y_0+x_0y_1
\end{pmatrix}\]
defines a semifield multiplication. We first apply  $\overline{\sigma^2}$ to the first component and get

\[\begin{pmatrix}x_0 \\ x_1 \end{pmatrix} \circ_{T'} \begin{pmatrix}y_0 \\ y_1 \end{pmatrix}  = \begin{pmatrix} x_0^\sigma y_0-\eta x_0y_0^\sigma+\beta(x_0^\sigma y_1+\eta y_0^\sigma x_1)^{\overline{\sigma}}+\alpha(x_1^\sigma y_1-\eta x_1y_1^\sigma)^{\overline{\sigma}^2} \\
x_1y_0+x_0y_1   \end{pmatrix},\]
where $\alpha^{\sigma^2}=\alpha'$ and $\beta^{\sigma^2}=\beta'$. Note that then $X^{\sigma+1}-\beta X-\alpha \in L[X]$ has also no root in $L$. We now compute the transpose as for the previous cases. Setting $\mathbf{y}=\left(\begin{smallmatrix}y_1 \\ y_0\end{smallmatrix}\right)$, we obtain:

\begin{align*}
	\begin{pmatrix}x_0 \\ x_1 \end{pmatrix} \circ_{T'}^t \begin{pmatrix}y_0 \\ y_1 \end{pmatrix} &=  \begin{pmatrix} x_1^{\overline{\sigma}}y_0^{\overline{\sigma}}-\eta x_0y_0^\sigma +\beta x_0 y_1^{\overline{\sigma}}+x_1y_1 \\	
	\eta \beta^\sigma x_0^\sigma y_0^{\sigma}+\alpha^{\sigma}x_0^{\sigma}y_1^{\overline{\sigma}}-\alpha^{\sigma^2}\eta x_0^{\sigma^2} y_1^{\sigma}+x_1y_0 \end{pmatrix} \\
	&=x_1 \mathbf{y} + \eta \begin{pmatrix}
		0 & - x_0 \\
		-(\alpha x_0)^{\sigma^2} & (\beta x_0)^\sigma
	\end{pmatrix}\mathbf{y}^\sigma + \begin{pmatrix}
		\beta x_0 & x_0^{\overline{\sigma}} \\
		(\alpha x_0)^\sigma &0
	\end{pmatrix} \mathbf{y}^{\overline{\sigma}}.
\end{align*}
A transformation $x_0 \mapsto -x_0/\alpha$ yields 
\begin{align*}
	\begin{pmatrix}x_0 \\ x_1 \end{pmatrix} \ast \begin{pmatrix}y_0 \\ y_1 \end{pmatrix}
	&=x_1 \mathbf{y} + \eta \begin{pmatrix}
		0 & x_0/\alpha \\
		x_0^{\sigma^2} & -(\beta/\alpha x_0)^\sigma
	\end{pmatrix}\mathbf{y}^\sigma + \begin{pmatrix}
		-\beta x_0/\alpha & -(x_0/\alpha)^{\overline{\sigma}} \\
		-x_0^\sigma &0
	\end{pmatrix} \mathbf{y}^{\overline{\sigma}} \\
	&= x_1 \mathbf{y}+\eta T_{x_0}(\mathbf{y})+\det(M_{T_{x_0}})T_{x_0}^{-1}(\mathbf{y}),
\end{align*}
where $T_{x_0}\in \Gamma L (2,L)$ for $x_0 \neq 0$ has associated field automorphism $\sigma$ and 
\[M_{T_{x_0}} = \begin{pmatrix}
		0 & x_0/\alpha \\
		x_0^{\sigma^2} & -(\beta/\alpha x_0)^\sigma
	\end{pmatrix}.\]
	Note that $X^{\sigma+1}-\beta X-\alpha \in L[X]$ has no roots in $L$ if and only if $X^{\sigma+1}-(\beta/\alpha)^\sigma X-1/\alpha \in L[X]$ has no roots, which can easily be verified by multiplying the second polynomial by $\alpha^{\sigma+1}$.	We conclude that the Taniguchi semifields are, via isotopy and the Knuth orbit, equivalent to Construction~\ref{thm:general_structure} with the admissible mapping from Proposition~\ref{prop:admissible2}. Note that the Taniguchi semifields yield (up until now) asymptotically the largest known family of semifields in odd characteristic~\cite{golouglu2023counting}. This in particular shows that Construction~\ref{thm:general_structure} is so far the most general construction known --- covering both the largest known constructions (the Taniguchi semifields) as well as other, non-isotopic, semifields like the commutative Zhou-Pott semifields.
	
\section{Investigating new semifields produced by the construction} \label{sec:new}

In this section, we consider the semifields that are produced by Construction~\ref{thm:general_structure} with the admissible mapping from Proposition~\ref{prop:admissible1}. As we have seen in the previous section, for the choice $\eta=-1$, we recover exactly (up to equivalence via Knuth orbit) the Zhou-Pott semifields. It is however clear that the construction also yields other semifields, for instance in characteristic 2. The semifield multiplication we investigate is thus
\begin{align} \label{eq:investigate}
	\begin{pmatrix}x_0 \\ x_1 \end{pmatrix} \circ \begin{pmatrix}y_0 \\ y_1 \end{pmatrix} &=y_0 \mathbf{x} + \eta \begin{pmatrix}
		0 &  y_1\alpha \\
		y_1^\tau & 0
	\end{pmatrix}\mathbf{x}^\sigma + \begin{pmatrix}
		0 & -(y_1\alpha)^{\overline{\sigma}}\\
		-y_1^{\tau\overline{\sigma}} &0
	\end{pmatrix} \mathbf{x}^{\overline{\sigma}} \\
	&=\begin{pmatrix} x_0y_0+\eta \alpha x_1^\sigma y_1-x_1^{\overline{\sigma}}(\alpha y_1)^{\overline{\sigma}} \\
	x_1y_0+\eta x_0^\sigma y_1^\tau-x_0^{\overline{\sigma}}y_1^{\tau \overline{\sigma}}\end{pmatrix}. \nonumber
\end{align}
Firstly, let us note that we may disregard the case $\sigma^2=\id$:

\begin{proposition} \label{prop:halbecase}
	A semifield constructed with Construction~\ref{thm:general_structure} via an admissible mapping from Proposition~\ref{prop:admissible1} with field automorphism $\sigma$ such that $\sigma^2=\id$  is isotopic to a semifield constructed with Construction~\ref{thm:general_structure_2} with the same admissible mapping. In particular, it is isotopic to a generalized Dickson semifield.
\end{proposition}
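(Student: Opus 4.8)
The plan is to show that, once $\sigma^{2}=\id$, the multiplication \eqref{eq:investigate} collapses — after absorbing the ``extra'' semilinear summand into an isotopism — to the shape of Construction~\ref{thm:general_structure_2}. The first move is to use $\overline{\sigma}=\sigma$ to merge the two semilinear terms of \eqref{eq:investigate}: both $\eta T_{y_1}$ and $\det(M_{T_{y_1}})^{\overline{\sigma}}T_{y_1}^{-1}$ are then $\sigma$-semilinear, and substituting the explicit inverse formula from the proof of Construction~\ref{thm:general_structure} gives $\mathbf x\circ\mathbf y=y_0\mathbf x+D_{y_1}T_{y_1}(\mathbf x)$, where $T_a$ is the Proposition~\ref{prop:admissible1} transformation with matrix $\left(\begin{smallmatrix}0&a\alpha\\ a^{\tau}&0\end{smallmatrix}\right)$ and $D_a=\diag\!\bigl(\eta-(a\alpha)^{\sigma-1},\ \eta-(a^{\tau})^{\sigma-1}\bigr)$ is a diagonal linear map. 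Equivalently $\mathbf x\circ\mathbf y=y_0\mathbf x+S_{y_1}(\mathbf x)$, with $S_a$ the $\sigma$-semilinear map of matrix $\left(\begin{smallmatrix}0&g(a\alpha)\\ g(a^{\tau})&0\end{smallmatrix}\right)$, $g(w)=\eta w-w^{\sigma}$; here $a\mapsto S_a$ is additive, and $g$ is an $\F_p$-linear bijection of $L$ (a kernel element solves $w^{\sigma-1}=\eta$, impossible since the $(\sigma-1)$-st powers are precisely the norm-$1$ elements over $K$ while $N_{L\colon K}(\eta)\neq1$).

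This already has the form $y_0\mathbf x+(\text{additive family of }\sigma\text{-semilinear maps})_{y_1}(\mathbf x)$ required by Construction~\ref{thm:general_structure_2}, so all that remains is to straighten the family into the normal form of Proposition~\ref{prop:admissible1}. In the degenerate sub-case $\sigma=\id$ this is immediate: then $D_a=(\eta-1)I$, so $\mathbf x\circ\mathbf y=y_0\mathbf x+(\eta-1)T_{y_1}(\mathbf x)$ with $\eta\neq1$, and the obvious isotopism scaling the $y_1$-coordinate by $(\eta-1)^{-\overline{\tau}}$ absorbs the factor $\eta-1$ into the admissible mapping, replacing $\alpha$ by $(\eta-1)^{1-\overline{\tau}}\alpha$; this stays admissible because $(\eta-1)^{1-\overline{\tau}}\in L^{\tau-1}$ lies in the ``forbidden'' subgroup $G=L^{\sigma+1}L^{\tau-1}$ appearing in the proof of Proposition~\ref{prop:admissible1}, and $G$ is a subgroup, so membership of a constant in $G$ is not affected. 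Hence from now on I would assume $\sigma$ has order exactly $2$.

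For the order-$2$ case the aim is to produce $\F_p$-linear bijections $N_1,N_2,N_3$ of $L^{2}$, with $N_3$ acting as a scalar on the $y_0$-coordinate (so the ``$y_0\mathbf x$''-part is preserved), with $N_1\bigl(y_0\mathbf x+S_{y_1}\mathbf x\bigr)=(N_3\mathbf y)_0\,N_2\mathbf x+T'_{(N_3\mathbf y)_1}(N_2\mathbf x)$ for an admissible mapping $a\mapsto T'_a$ of the type in Proposition~\ref{prop:admissible1}. Matching the ``$y_0$''-parts forces $N_1$ and $N_2$ to agree up to a fixed diagonal scaling and possibly the Frobenius $\sigma$, after which the identity reduces to $N_1S_aN_2^{-1}=T'_{\psi(a)}$ for an additive bijection $\psi$ of $L$. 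The mechanism to hope for is that the twist $g$ — or, in the $D_aT_a$ picture, the $a$-dependent diagonal $D_a$ — can be conjugated past the fixed scalings of $N_1,N_2$ and repackaged as a reparametrization $a\mapsto b$ together with a change of the constant $\alpha$ to some $\alpha''$ (and, if need be, a replacement of $\tau$ by $\tau\sigma$), so that $N_1S_aN_2^{-1}$ acquires the clean matrix $\left(\begin{smallmatrix}0&b\alpha''\\ b^{\tau''}&0\end{smallmatrix}\right)$. Carrying this out reduces everything to a couple of scalar identities in $L$ fixing $N_1,N_2,N_3$ and $\alpha''$; one then checks via Proposition~\ref{prop:irred} that $T'_a$ stays irreducible, i.e.\ $\alpha''\notin G''=L^{\sigma+1}L^{\tau''-1}$, which should hold because the defect $\alpha''/\alpha$ is built from a $(\sigma\pm1)$-st power of $\alpha$ and a $(\tau''-1)$-st power of an expression in $\eta$ (as with $(\eta-1)^{1-\overline{\tau}}$ above), hence lies in the subgroup $G''$ and does not change whether the constant is forbidden.

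Finally, ``in particular it is isotopic to a generalized Dickson semifield'' follows from Section~\ref{sec:known}: there Construction~\ref{thm:general_structure_2} with a Proposition~\ref{prop:admissible1} admissible mapping is shown to lie in the Knuth orbit of a generalized Dickson semifield, and, dually, transposing or dualizing a generalized Dickson semifield again yields a generalized Dickson semifield (one only permutes the automorphism parameters, and the analogue of the forbidden subgroup is preserved), so our semifield is isotopic to one in that family. The hard part will be the third paragraph: exhibiting the conjugating element that puts $\{S_a\}$ into the Proposition~\ref{prop:admissible1} normal form, and — the delicate bookkeeping — tracking the constant $\alpha$ through that change of coordinates accurately enough to re-establish the irreducibility condition.
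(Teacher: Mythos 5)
Your opening paragraph matches the paper's first step: for $\sigma^{2}=\id$ the two semilinear summands of Eq.~\eqref{eq:investigate} collapse into one $\sigma$-semilinear map with off-diagonal entries $g(y_1\alpha)$ and $g(y_1^\tau)$, where $g(w)=\eta w-w^{\sigma}=-P(w)$ in the paper's notation, and $g$ is an additive bijection of $L$ because a nonzero kernel element $w$ would give $w^{\sigma-1}=\eta$, hence $N_{L\colon K}(\eta)=\eta^{\sigma+1}=1$, contradicting the hypothesis on $\eta$. The degenerate sub-case $\sigma=\id$ you then dispatch correctly: there $g$ is $L$-linear, the scalar $\eta-1$ is absorbed by rescaling $y_1$, and the displaced constant $(\eta-1)^{1-\overline\tau}\alpha$ stays outside $G=L^{\sigma+1}L^{\tau-1}$ because $(\eta-1)^{1-\overline\tau}=\bigl((\eta-1)^{\overline\tau}\bigr)^{\tau-1}\in L^{\tau-1}\subseteq G$.

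The real content of the proposition, however, is the case when $\sigma$ has order exactly $2$, and there your write-up is a plan, not a proof: you explicitly defer ``exhibiting the conjugating element'' and ``tracking the constant $\alpha$'' as ``the hard part'' and do not carry them out. That is a genuine gap, not a routine omission, and the mechanism you sketch does not obviously close it. Once $g$ fails to be $L$-linear, any isotopism of the kind you propose (fixed diagonal or anti-diagonal, possibly $\sigma$-semilinear, scalings of $L^{2}$ together with a reparametrization of $y_1$) that maps the family $\{S_a\}$ onto a Proposition~\ref{prop:admissible1} family must satisfy two scalar identities at once, obtained by matching the coefficients of the two $y_1$-monomials in each off-diagonal entry; eliminating the unknown scaling constants from these identities leaves a nontrivial compatibility condition relating $\alpha$ and $\eta$ (in the simplest case $\alpha^{\sigma-1}=\eta^{1-\overline\tau}$, with analogous variants for the other choices of $\tau'$). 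So ``repackaging the twist into a new $\alpha''$'' is not available in general by the transformations you envisage, and the deferred step is not merely bookkeeping. The paper's own proof is also quite different from your plan: it does not case-split on the order of $\sigma$ and does not normalize by conjugation; instead it rewrites the product as $y_0\mathbf{x}+\left(\begin{smallmatrix}0&P(-y_1\alpha)\\P(-y_1^\tau)&0\end{smallmatrix}\right)\mathbf{x}^\sigma$, applies $\diag(P^{-1},P^{-1})$ and the substitution $y_0\mapsto P(y_0)$ in one stroke, and then appeals to the generalized Dickson identification in Section~\ref{sec:known} and Table~\ref{t:1}. You would need to either reproduce that manoeuvre or replace your conjugation sketch with an honest computation covering the order-two case to have a complete argument.
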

\begin{proof}
	We have for $\sigma^2=\id$ (see Eq.~\eqref{eq:investigate})
	\begin{equation}
	\begin{pmatrix}x_0 \\ x_1 \end{pmatrix} \circ \begin{pmatrix}y_0 \\ y_1 \end{pmatrix} =y_0 \mathbf{x} + \begin{pmatrix}
		0 &  P(-y_1\alpha) \\
		P(-y_1^\tau)& 0
	\end{pmatrix}\mathbf{x}^\sigma.
	\label{eq:poop}
	\end{equation}
	where $P(y)=y^\sigma-\eta y$. We prove that $P$ (as a mapping on $L$) permutes $L$. This is clear if $\sigma=\id$ (recall $\eta\neq 1$ by the condition on $\eta$). So assume that $\sigma\neq \id$ and let $K$ be the fixed field of $\sigma$, i.e. $[L\colon K]=2$. Note that $P$ is additive, so it suffices to show that $P(y)=0$ has only the solution $y=0$ in $L$. So let $y^\sigma-\eta y=0$ for $y\neq 0$, i.e. $y^{\sigma-1}=\eta$. Then $\eta^{\sigma+1}=y^{\sigma^2-1}=1$ since $\sigma^2=\id$. But we have $\eta^{\sigma+1}=N_{L\colon K}(\eta)\neq 1$ by the condition in  Construction~\ref{thm:general_structure}, yielding a contradiction. We conclude that $P$ permutes $L$. We can thus perform an additive transformation (preserving isotopy) by applying $\left(\begin{smallmatrix} P^{-1} & 0 \\ 0 & P^{-1} \end{smallmatrix}\right)$ to Eq.~\eqref{eq:poop}, which results in
		\[\begin{pmatrix}
			P^{-1} & 0 \\
			0 & P^{-1}
		\end{pmatrix} y_0 \mathbf{x} + \begin{pmatrix}
		0 &  -y_1\alpha \\
		-y_1^\tau& 0
	\end{pmatrix}\mathbf{x}^\sigma,\]
	and a further transformation $y_0 \mapsto P(y_0)$ yields the desired equivalence to a semifield constructed with Construction~\ref{thm:general_structure_2}. As discussed in the previous section, these are generalized Dickson semifields (see also Table~\ref{t:1}).
\end{proof}

We will now perform some transformations on Eq.~\eqref{eq:investigate} that simplify our analysis but preserve isotopy.
Firstly, we apply an transformation $y_1 \mapsto -y_1/\alpha$, as well as renaming $\alpha \mapsto 1/\alpha$ that will make this semifield easier to analyze and compare to the existing semifields, in particular the Zhou-Pott semifield. We arrive at:

\[	\begin{pmatrix}x_0 \\ x_1 \end{pmatrix} \circ \begin{pmatrix}y_0 \\ y_1 \end{pmatrix}=\begin{pmatrix} x_0y_0-\eta x_1^\sigma y_1+x_1^{\overline{\sigma}}y_1^{\overline{\sigma}} \\
	x_1y_0-\eta x_0^\sigma (\alpha y_1)^\tau+x_0^{\overline{\sigma}}(\alpha y_1)^{\tau \overline{\sigma}}\end{pmatrix}.\]
	
	We now compute its transpose:
	
	\[	\begin{pmatrix}x_0 \\ x_1 \end{pmatrix} \circ^t \begin{pmatrix}y_0 \\ y_1 \end{pmatrix}=\begin{pmatrix} x_1y_1^\sigma-\eta x_1^\sigma y_1+\alpha(x_0y_0^\sigma-\eta x_0^\sigma y_0)^{\overline{\tau}} \\
	x_0y_1+x_1y_0\end{pmatrix}.\]
	Here, we are easily able to discern again the commutative Zhou-Pott semifields for $\eta=-1$. We finish our preparation by applying $\tau$ to $x_0,y_0$ (clearly preserving isotopy again):
		\begin{equation}
					\begin{pmatrix}x_0 \\ x_1 \end{pmatrix} \circ \begin{pmatrix}y_0 \\ y_1 \end{pmatrix}=\begin{pmatrix} x_1y_1^\sigma-\eta x_1^\sigma y_1+\alpha(x_0y_0^\sigma-\eta x_0^\sigma y_0) \\
	x_0^\tau y_1+x_1y_0^\tau \end{pmatrix}.
		\label{eq:ZP_final}
		\end{equation}
	The reason we choose to work with this representation of the semifields is that both components of the semifield are homogeneous polynomials (recall that $\sigma$ and $\tau$ are both Frobenius automorphisms, i.e., mappings of the form $x \mapsto x^{p^k}$). We call semifields with this property \emph{biprojective}, and strong tools to investigate semifields with this property have been developed by G\"olo\u{g}lu and the author~\cite{golouglu2023counting,golouglu2023exponential}:
	
	\begin{definition}
		Let $\S$ be a semifield  with semifield multiplication
		\[	\begin{pmatrix}x_0 \\ x_1 \end{pmatrix} \circ \begin{pmatrix}y_0 \\ y_1 \end{pmatrix} = \begin{pmatrix} f(x_0,x_1,y_0,y_2) \\ g(x_0,x_1,y_0,y_2) \end{pmatrix}\]
		defined on $L^2$. We call $\S$ a $(\sigma,\tau)$-{biprojective} semifield if $\sigma \colon x \mapsto x^{p^k}$, $\tau \colon x \mapsto x^{p^l}$, $f$ is $(p^k+1)$-homogeneous and $g$ is $(p^l+1)$-homogeneous.
	\end{definition}
	
	Many known semifields (including most of the semifields we investigated in this paper) are (equivalent to) biprojective semifields: Dickson (automorphism pair $(\id,\sigma)$), Knuth I and Taniguchi (both $\sigma,\sigma^2$), Knuth II/Hughes-Kleinfeld ($\sigma,\sigma$), Bierbrauer and Dempwolff (both $\sigma,\id$) and Zhou-Pott ($\sigma,\tau$), the necessary transformations to bring these semifields into the biprojective form are usually easy to spot, some are displayed in~\cite{golouglu2023counting,golouglu2023exponential}.\\

	For the rest of this section, we will denote by $\S_{\sigma,\tau,\alpha,\eta}$ the semifield defined on $L^2$ with multiplication as defined in Eq.~\eqref{eq:ZP_final}. \\

	We want to investigate the  four following natural questions:
	\begin{enumerate}
		\item For which parameters is $\S_{\sigma,\tau,\alpha,\eta}$ isotopic to $\S_{\sigma',\tau',\alpha',\eta'}$?
		\item How many non-isotopic semifields of size $p^{2m}$ are defined by this construction?
			\item What are the nuclei of the semifields $\S_{\sigma,\tau,\alpha,\eta}$?
		\item Can we prove that this construction produces semifields that are non-isotopic to any known construction so far?
	\end{enumerate}
		We will answer all these questions in this section. In particular, we show that our family contains many semifields non-isotopic to the Zhou-Pott family it contains as well as semifields that are inequivalent to any other known semifield.

		\subsection{Isotopies inside the family $\S_{\sigma,\tau,\alpha,\eta}$}
		
	We start with observing simple isotopies inside the family of semifields $\S_{\sigma,\tau,\alpha,\eta}$.
	
	\begin{proposition} \label{prop:isotopies}
	\begin{enumerate}
	The semifield $\S_{\sigma,\tau,\alpha,\eta}$ is isotopic to $\S_{\sigma,\overline{\tau},1/\alpha,\eta}$ and $\S_{\overline{\sigma},\tau,\alpha,1/\eta}$.
	\end{enumerate}

	\end{proposition}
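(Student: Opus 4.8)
The plan is to exhibit explicit isotopisms $(N_1,N_2,N_3)$ for each of the two claimed equivalences, using the biprojective form in Eq.~\eqref{eq:ZP_final}. Recall that an isotopism between $\S_{\sigma,\tau,\alpha,\eta}$ and $\S_{\sigma',\tau',\alpha',\eta'}$ is a triple of $\F_p$-linear bijections with $N_1(\mathbf{x}\circ\mathbf{y})=N_2(\mathbf{x})\circ' N_3(\mathbf{y})$. Since the semifield multiplications are built componentwise, the natural candidates for $N_1,N_2,N_3$ are maps that act "diagonally" on the two coordinates, possibly swapping them and possibly applying field automorphisms or scalar multiplications. So concretely I would write $\mathbf{x}=\left(\begin{smallmatrix}x_0\\x_1\end{smallmatrix}\right)$, guess a form for the three maps, plug into Eq.~\eqref{eq:ZP_final}, and read off what constraints on the parameters make both components match.

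For the first isotopy, $\S_{\sigma,\tau,\alpha,\eta}\cong\S_{\sigma,\overline{\tau},1/\alpha,\eta}$, the idea is to swap the roles of the two coordinates $x_0$ and $x_1$ (this does not change $\sigma$, which governs the first component, but it will invert $\tau$, which governs the second, and invert $\alpha$, which is the coefficient attached to the $x_0$-block). Explicitly I would try $N_2=N_3 = \left(\begin{smallmatrix}0&1\\1&0\end{smallmatrix}\right)$ (the coordinate swap) together with a suitable $N_1$ — likely also a swap, possibly composed with multiplication by a constant like $\alpha^{\pm1}$ or an application of $\tau$ to one coordinate — chosen so that after the swap the first component of the product, currently $x_1y_1^\sigma-\eta x_1^\sigma y_1 + \alpha(x_0y_0^\sigma - \eta x_0^\sigma y_0)$, becomes $\tilde x_1\tilde y_1^\sigma - \eta \tilde x_1^\sigma \tilde y_1 + (1/\alpha)(\tilde x_0\tilde y_0^\sigma - \eta \tilde x_0^\sigma \tilde y_0)$, and the second component, currently $x_0^\tau y_1 + x_1 y_0^\tau$, becomes $\tilde x_0^{\overline\tau}\tilde y_1 + \tilde x_1\tilde y_0^{\overline\tau}$; the latter requires applying $\overline\tau$ to the whole second component, which is allowed since it is an additive bijection of $L$ and $\tau\overline\tau=\id$. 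I would track the scalars carefully to pin down exactly which power of $\alpha$ appears where.

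For the second isotopy, $\S_{\sigma,\tau,\alpha,\eta}\cong\S_{\overline\sigma,\tau,\alpha,1/\eta}$, I would instead apply field automorphisms. The first component $x_1y_1^\sigma-\eta x_1^\sigma y_1+\alpha(x_0y_0^\sigma-\eta x_0^\sigma y_0)$ has the feature that $x_1y_1^\sigma - \eta x_1^\sigma y_1 = -\eta(x_1^\sigma y_1 - \eta^{-1}x_1 y_1^\sigma)$, and applying $\overline\sigma$ to this expression turns $x_1^{\sigma}y_1 - \eta^{-1}x_1 y_1^{\sigma}$ into $x_1 y_1^{\overline\sigma} - \eta^{-\overline\sigma} x_1^{\overline\sigma} y_1$ up to reindexing, which is of the right shape for parameter $\overline\sigma$ and a transformed $\eta$. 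So the plan is: take $N_1$ to be $\overline\sigma$ applied to the first coordinate and the identity (or $\overline\sigma$, to be determined) on the second, with $N_2,N_3$ built from $\overline\sigma$ on appropriate coordinates and scalar corrections; then check that the new $\eta$-parameter that emerges is $1/\eta$ (up to an automorphism applied to it, which may need to be absorbed — but since $N_{L:K}(\eta)\neq1$ iff $N_{L:K}(1/\eta)\neq1$ and applying an automorphism to $\eta$ does not change its norm, the admissibility condition is preserved).

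The steps are thus: (i) for each claimed equivalence, write down the candidate triple $(N_1,N_2,N_3)$; (ii) substitute into $N_1(\mathbf{x}\circ\mathbf{y})$ and expand both components; (iii) match against $N_2(\mathbf{x})\circ' N_3(\mathbf{y})$ using Eq.~\eqref{eq:ZP_final} with the primed parameters and solve for any free scalars; (iv) verify that each $N_i$ is indeed $\F_p$-linear and bijective (automorphisms and nonzero scalar multiples and coordinate swaps all are). The main obstacle is bookkeeping: getting the scalar factors (powers of $\alpha$, $\eta$, and their images under $\sigma,\tau$) to line up exactly so that the \emph{same} $\eta$ appears in the first isotopy and exactly $1/\eta$ in the second, rather than some Galois conjugate; I expect this is resolved by choosing the scalar corrections in $N_2,N_3$ to exactly cancel the unwanted conjugates, using that $\sigma,\tau$ commute (both are powers of Frobenius) and that norms are automorphism-invariant. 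There is no deep idea here beyond the right choice of transformations — the content is entirely in Eq.~\eqref{eq:ZP_final} being homogeneous and "symmetric enough" in its two coordinates.
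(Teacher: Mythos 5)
Your plan is essentially the paper's: for the first isotopy, swap both coordinates of $\mathbf{x}$ and $\mathbf{y}$ (so $N_2 = N_3$ is the coordinate swap), then let $N_1$ divide the first component by $\alpha$ and apply $\overline\tau$ to the second; for the second, use the Frobenius $\overline\sigma$ together with a scalar correction. The one place where your sketch is not quite right and where the paper is cleaner: you contemplate applying $\overline\sigma$ to the \emph{output} expression (as part of $N_1$), correctly observe that this would produce $\eta^{-\overline\sigma}$ (and in fact also $\alpha^{\overline\sigma}$), and then try to wave this away by saying the norm of $\eta$ is unchanged. That last step does not close the gap, because the claim is that the result is precisely $\S_{\overline\sigma,\tau,\alpha,1/\eta}$, not merely some $\S_{\overline\sigma,\tau,\alpha',\eta'}$ with the same norm; appealing to the isotopy classification to fix it up would be circular, since that theorem comes later in the paper. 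The fix is exactly the alternative you also float: apply $\overline\sigma$ to the four \emph{input} variables (i.e.\ $N_2 = N_3$ is componentwise $\overline\sigma$). Then $x_1^\sigma y_1 \mapsto (x_1^{\overline\sigma})^\sigma y_1^{\overline\sigma} = x_1 y_1^{\overline\sigma}$ and the coefficients $\eta,\alpha$ are untouched, so that after multiplying the first component by $-1/\eta$ and applying $\sigma$ to the second, one lands exactly on $\S_{\overline\sigma,\tau,\alpha,1/\eta}$ with no Galois conjugates to absorb.
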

	\begin{proof}
		We start with the first claim. We perform the following isotopic transformations: Starting with the multiplication of $\S_{\sigma,\tau,\alpha,\eta}$ in Eq.~\eqref{eq:ZP_final}, we exchange $x_0 \leftrightarrow x_1$ and $y_0 \leftrightarrow y_1$, arriving at
				\begin{equation*}
				\begin{pmatrix}x_0 \\ x_1 \end{pmatrix} \circ' \begin{pmatrix}y_0 \\ y_1 \end{pmatrix} =\begin{pmatrix} x_0y_0^\sigma-\eta x_0^\sigma y_0+\alpha(x_1y_1^\sigma-\eta x_1^\sigma y_1) \\
	x_1^\tau y_0+x_0y_1^\tau \end{pmatrix}.
		\end{equation*}
		We then divide the first component by $\alpha$ and apply $\overline{\tau}$ to the second component, arriving at the multiplication of $\S_{\sigma,\overline{\tau},1/\alpha,\eta}$.
		
		The second claim follows similarly, the transformations applied to $\S_{\sigma,{\tau},\alpha,\eta}$ are: Applying $\overline{\sigma}$ to $x_0,x_1,y_0,y_1$ and then dividing the first component by $-1/\eta$ and applying $\sigma$ to the second component.
		
	\end{proof}
	
	We now use the group theoretic machinery derived for determining if biprojective semifields are isotopic are not. The tools were developed in~\cite{golouglu2023exponential} for commutative biprojective semifields and generalized to non-commutative biprojective semifields in~\cite{golouglu2023counting}. 
	
	We introduce the necessary notation: Recall that we denote by $\Aut(\S) \leq \GL(\F_p^n)^3$ the autotopism group of the semifield $\S$ with $p^n$ elements, i.e. the set of all isotopisms from $\S$ to itself. It is well known and easy to see that isotopic semifields have conjugate autotopism groups (see e.g.~\cite[Lemma 5.1.]{golouglu2023exponential}).
	
	We write mappings $A \in \End_{\F_p}(L^2)$ as $2 \times 2$ matrices
of       $\F_{p}$-linear mappings     from $L$ to itself.
That is,
\[
A=\begin{pmatrix}
	A_1 & A_2 \\
	A_3 & A_4
\end{pmatrix}, \textrm{ for } A_i \in \End_{\F_p}(L). 
\]
We call the constituent functions $A_1,\dots,A_4$ of $A$ \textit{subfunctions of} $A$.
Set for two field automorphisms $\sigma,\tau \in \Gal(L)$
\[
\gamma_r=(N_r,L_r,M_r)\in \GL(L)^3  \textrm{ with } 
N^{(\sigma,\tau)}_r=\begin{pmatrix}
	m_{r^{\sigma+1}} & 0 \\
	0 & m_{r^{\tau+1}}
\end{pmatrix}, \quad 
L_r=M_r=\begin{pmatrix}
	m_{r} & 0 \\
	0 & m_{r}
\end{pmatrix}, 
\]
where $m_r$ denotes  multiplication with the finite field element $r \in L^*$. 
For simplicity, we write these diagonal matrices also in the form $\diag(m_{r},m_{r})$, 
so 
\[
\gamma^{(\sigma,\tau)}_r = (\diag(m_{r^{\sigma+1}},m_{r^{\tau+1}}),\diag(m_{r},m_{r}),\diag(m_{r},m_{r})).
\]

The crucial fact is that
$\gamma^{(\sigma,\tau)}_r \in \Aut(\S)$ for all $r \in L^*$ when $\S$ is a $(\sigma,\tau)$-biprojective semifield, which follows immediately from the definition of biprojectivity:

\begin{lemma} \label{lem_first}
Let $\S$ be a $(\sigma,\tau)$-biprojective semifield on $L^2$. Then 
$\gamma^{(\sigma,\tau)}_r\in \Aut(\S)$ for all $r \in L^*$.
\end{lemma}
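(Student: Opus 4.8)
The plan is to verify directly from the definition of isotopism that the triple $\gamma^{(\sigma,\tau)}_r = (\diag(m_{r^{\sigma+1}},m_{r^{\tau+1}}),\diag(m_r,m_r),\diag(m_r,m_r))$ satisfies the autotopism condition $N_1(\mathbf{x}\circ\mathbf{y}) = N_2(\mathbf{x})\circ N_3(\mathbf{y})$ for all $\mathbf{x},\mathbf{y}\in L^2$. Since $N_2 = N_3 = \diag(m_r,m_r)$ simply multiplies every coordinate of $\mathbf{x}$ and $\mathbf{y}$ by $r\in L^*$, the right-hand side is $(r\mathbf{x})\circ(r\mathbf{y})$, whose two components are $f(rx_0,rx_1,ry_0,ry_1)$ and $g(rx_0,rx_1,ry_0,ry_1)$.

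The key observation is homogeneity. By the definition of a $(\sigma,\tau)$-biprojective semifield, $f$ is $(p^k+1)$-homogeneous and $g$ is $(p^l+1)$-homogeneous in the four variables $x_0,x_1,y_0,y_1$ jointly; that is, $f(r x_0, r x_1, r y_0, r y_1) = r^{p^k+1} f(x_0,x_1,y_0,y_1) = r^{\sigma+1} f(x_0,x_1,y_0,y_1)$ and similarly $g(r x_0,\dots) = r^{\tau+1} g(x_0,x_1,y_0,y_1)$ (using the abuse of notation from Notation~\ref{not} that writes $r^{p^k+1}$ as $r^{\sigma+1}$). Hence
\[
(r\mathbf{x})\circ(r\mathbf{y}) = \begin{pmatrix} r^{\sigma+1} f(x_0,x_1,y_0,y_1) \\ r^{\tau+1} g(x_0,x_1,y_0,y_1) \end{pmatrix} = \begin{pmatrix} m_{r^{\sigma+1}} & 0 \\ 0 & m_{r^{\tau+1}} \end{pmatrix}\begin{pmatrix} f(x_0,x_1,y_0,y_1) \\ g(x_0,x_1,y_0,y_1) \end{pmatrix} = N_1(\mathbf{x}\circ\mathbf{y}),
\]
which is exactly the autotopism identity. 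Since $r \in L^*$, all three matrices $N_1, L_r, M_r$ are invertible ($m_{r^{\sigma+1}}$, $m_{r^{\tau+1}}$, $m_r$ are bijective as $r^{\sigma+1}, r^{\tau+1}, r$ are nonzero), so $\gamma^{(\sigma,\tau)}_r \in \GL(L)^3$ and therefore $\gamma^{(\sigma,\tau)}_r \in \Aut(\S)$.

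There is essentially no obstacle here: the lemma is an immediate unwinding of the definition of biprojectivity, and the only point requiring a moment's care is matching the exponent conventions — that the homogeneity degrees $p^k+1$ and $p^l+1$ are precisely the exponents appearing in $N^{(\sigma,\tau)}_r$ when written as $r^{\sigma+1}$ and $r^{\tau+1}$, and that the $\F_p$-linearity of each $m_s$ (needed so that $N_1, L_r, M_r$ are genuinely $\F_p$-linear maps of $L^2$, as required in the definition of isotopism) holds because $L$ is an $\F_p$-algebra. I would state these remarks briefly and conclude.
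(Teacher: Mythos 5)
Your proof is correct and takes exactly the approach the paper intends: the paper asserts this lemma "follows immediately from the definition of biprojectivity" without spelling it out, and your direct unwinding — applying the joint homogeneity of $f$ and $g$ to $(r\mathbf{x})\circ(r\mathbf{y})$ and matching the result with $N^{(\sigma,\tau)}_r$ applied to $\mathbf{x}\circ\mathbf{y}$ — is precisely that immediate argument, with the invertibility check added for completeness.
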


We fix some further notation that we will use for the remainder of this section (on top of the notation already introduced in Notation~\ref{not}):
\begin{notation}
\begin{itemize}
\item[]
\item Let $\S$ be a $(\sigma,\tau)$-biprojective semifield.
\item $\GL(L^2) \cong \GL(2m,p)$ is the set of all invertible linear mappings of $L^2$ as an $\F_p$-vector space.
\item Define the cyclic group 
\[
	Z^{(\sigma,\tau)} = \{ \gamma^{(\sigma,\tau)}_r : r \in L^*\} \leq \GL(L^2)^3
\] of order $p^m-1$. By Lemma~\ref{lem_first}, we have $Z^{(\sigma,\tau)}\leq \Aut(\S)$.
\item Let $p'$ be a $p$-primitive divisor of $p^m-1$, i.e. $p'|p^m-1$ and $p'\nmid p^{t}-1$ for $t<m$. Such a prime $p'$ always exists if 
$m > 2$ and $(p,m) \neq (2,6)$ by Zsigmondy's Theorem (see e.g.~\cite[Chapter IX., Theorem 8.3.]{HuppertII}).  
\item Let $R$ be the unique Sylow $p'$-subgroup of $L^*$.
\item Define
\[
Z_R^{(\sigma,\tau)} = \{ \gamma_r \colon r \in R\},
\]
which is the unique Sylow $p'$-subgroup of $Z^{(\sigma,\tau)}$ with $|R|$ elements. 
\item For a $(\sigma,\tau)$-biprojective semifield $\S$, denote by 
\[
C_{\S} = C_{\Aut(\S)}(Z_R^{(\sigma,\tau)}),
\]
the centralizer of $Z_R^{(\sigma,\tau)}$ in $\Aut(\S)$.
\end{itemize}
\end{notation}

The basic idea to prove that two biprojective semifields are not isotopic is to show that if they are isotopic, then the respective subgroups $Z_R^{(\sigma,\tau)}$ have to be conjugate, which imposes strong conditions on potential isotopisms. The proof uses sophisticated tools from group theory, see \cite{golouglu2023exponential}. We cite the end result which we are going to use, it was proven by G\"olo\u{g}lu and the author in \cite[Theorem 5.10.]{golouglu2023exponential} for commutative biprojective semifields and generalized to non-commutative biprojective semifields in~\cite[Theorem 3]{golouglu2023counting} (in fact, it was proven specifically for the biprojective Taniguchi semifields but holds in the general case as well). 
\begin{theorem} \label{thm:biproj}
	Let $L=\F_{p^m}$ with $m>2$, $(p,m)\neq (2,6)$, and $\S_1$, $\S_2$ be $(\sigma_1,\tau_1$)- and $(\sigma_2,\tau_2)$-biprojective semifields, respectively, defined on $L^2$. Assume that $\tau_1^2,\sigma_1^2 \neq \id$, $\sigma_1 \notin \{\tau_1,\overline{\tau_1}\}$ and
	\begin{equation}
		C_{\S_1} \text{ contains }Z^{(\sigma,\tau)} \text{ as an index } I \text{ subgroup such that }p'\text{ does not divide }I.
	\tag{C}\label{eq:condition}
	\end{equation}
	If $\S_1$ and $\S_2$ are isotopic, then there exists an isotopism $\gamma=(N_1,N_2,N_3) \in \Gamma L(L^2)^3$ such that $N_2,N_3 \in \Gamma L (2,L)$. Furthermore, 
$\sigma_2,\tau_2 \in \{\sigma_1,\overline{\sigma_1},\tau_1,\overline{\tau_1}\}$ and if $\sigma_1=\sigma_2$ and $\tau_1=\tau_2$ then $\gamma \in \Gamma L (2,L)^3$, $N_1$ is a diagonal matrix, and the field automorphisms associated with $N_1,N_2,N_3$ are the same. Similarly, if $\sigma_1=\tau_2$ and $\tau_1=\sigma_2$ then $N_1$ is anti-diagonal, the field automorphisms of $N_1,N_2,N_3$ coincide.
\end{theorem}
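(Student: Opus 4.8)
The statement is essentially \cite[Theorem~3]{golouglu2023counting}, which in turn rests on \cite[Theorem~5.10]{golouglu2023exponential}, and the plan is to follow that line of argument. The underlying idea is that a $(\sigma,\tau)$-biprojective semifield $\S$ carries the canonical abelian subgroup $Z^{(\sigma,\tau)}\le\Aut(\S)$ of Lemma~\ref{lem_first}, whose Sylow $p'$-part $Z_R^{(\sigma,\tau)}$ is rigid enough that any isotopy $\S_1\to\S_2$ must carry $Z_R^{(\sigma_1,\tau_1)}$ to a subgroup of $\Aut(\S_2)$ that is $\Aut(\S_2)$-conjugate to $Z_R^{(\sigma_2,\tau_2)}$; once this is established, the shape of the isotopism is forced by a normaliser computation and by the monomial structure that biprojectivity imposes.

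First I would fix an isotopism $\gamma=(N_1,N_2,N_3)\colon\S_1\to\S_2$ and use $\Aut(\S_2)=\gamma\Aut(\S_1)\gamma^{-1}$ to regard both $P:=\gamma Z_R^{(\sigma_1,\tau_1)}\gamma^{-1}$ and $Q:=Z_R^{(\sigma_2,\tau_2)}$ as cyclic $p'$-subgroups of $\Aut(\S_2)$ of order $|R|$. The main obstacle is to show that $P$ and $Q$ are conjugate in $\Aut(\S_2)$: this is where the group-theoretic machinery of \cite{golouglu2023exponential} is needed. One uses that $p'$ is a $p$-primitive divisor of $p^m-1$ (Zsigmondy, available since $m>2$ and $(p,m)\neq(2,6)$) to show that any element of order $p'$ in $\GL_{\F_p}(L^2)$ acts with all irreducible $\F_p$-constituents of dimension exactly $m$, which severely restricts such $p'$-subgroups; one transports hypothesis~\eqref{eq:condition} along $\gamma$ to control the centraliser $C_{\Aut(\S_2)}(P)$, the coprimality $p'\nmid I$ being exactly what keeps $Z_R$ canonical; and then a Sylow/fusion argument inside $\Aut(\S_2)$ produces a conjugating element $\delta$. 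Replacing $\gamma$ by $\delta\gamma$ (harmless for the isotopy class and for the conclusion) I may then assume
\[
\gamma\,\gamma_r^{(\sigma_1,\tau_1)}\,\gamma^{-1}=\gamma_{\psi(r)}^{(\sigma_2,\tau_2)}\qquad(r\in R),
\]
where $\psi$ is an automorphism of $R$.

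Next I would project this identity onto the second and third coordinates. On each of these the group $Z_R^{(\sigma,\tau)}$ acts by the scalar matrices $\diag(m_r,m_r)$, $r\in R$, so $N_2$ and $N_3$ normalise $\{\diag(m_r,m_r)\colon r\in R\}$ in $\GL_{\F_p}(L^2)$. Since $p'$ is $p$-primitive, $R$ generates $L$ as an $\F_p$-algebra, so the $\F_p$-span of this group is the diagonally embedded field $L$; an element normalising this field-copy is necessarily $L$-semilinear, hence $N_2,N_3\in\Gamma L(2,L)$, which is the first assertion. Writing $\phi_2,\phi_3\in\Gal(L)$ for their associated automorphisms and reading the displayed identity once more on coordinates $2$ and $3$ gives $r^{\phi_2}=\psi(r)=r^{\phi_3}$ for all $r\in R$, so $\phi_2=\phi_3=:\phi$ and $\psi=\phi|_R$.

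Finally I would expand the defining identity $N_1(\mathbf x\circ_1\mathbf y)=N_2(\mathbf x)\circ_2 N_3(\mathbf y)$ in monomials, writing $N_1=\left(\begin{smallmatrix}A_1&A_2\\A_3&A_4\end{smallmatrix}\right)$ with $A_i\in\End_{\F_p}(L)$ and $\circ_i=(f_i,g_i)$. Because $N_2,N_3$ are $\phi$-semilinear and $f_2,g_2$ are $(\sigma_2+1)$- and $(\tau_2+1)$-homogeneous biadditive forms, every monomial of $f_2(N_2\mathbf x,N_3\mathbf y)$ (resp.\ of $g_2(N_2\mathbf x,N_3\mathbf y)$) has its $\mathbf x$-exponent and $\mathbf y$-exponent differing multiplicatively by $\sigma_2$ or $\overline{\sigma_2}$ (resp.\ by $\tau_2$ or $\overline{\tau_2}$), while applying the $\F_p$-linearised maps $A_1,A_2$ to $f_1,g_1$ produces monomials whose exponents differ by $\sigma_1,\overline{\sigma_1}$ (from $f_1$) or $\tau_1,\overline{\tau_1}$ (from $g_1$); comparing, and using that $\S_2$ has no zero divisors so that the coordinates are nonzero and cannot cancel to $0$, forces $\sigma_2,\tau_2\in\{\sigma_1,\overline{\sigma_1},\tau_1,\overline{\tau_1}\}$. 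If moreover $\sigma_1=\sigma_2$ and $\tau_1=\tau_2$, then since $\{\sigma_1,\overline{\sigma_1}\}\cap\{\tau_1,\overline{\tau_1}\}=\emptyset$ (by $\sigma_1\notin\{\tau_1,\overline{\tau_1}\}$), the $\tau_1$-type monomials of $A_2g_1$ cannot occur in the first coordinate, and symmetrically for $A_3f_1$, so $A_2=A_3=0$ and $N_1$ is diagonal; the identity then reads $A_1f_1=f_2(N_2\mathbf x,N_3\mathbf y)$, and matching homogeneity degrees yields $A_1(sw)=s^{\phi}A_1(w)$ for all $w\in L$ and $s\in L^{\sigma_1+1}$ (and likewise for $A_4$ over $L^{\tau_1+1}$). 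The hypotheses $\sigma_1^2\neq\id$, $\tau_1^2\neq\id$ are precisely what force $L^{\sigma_1+1}$ and $L^{\tau_1+1}$ to generate $L$ over $\F_p$ (otherwise, e.g., $L^{\sigma_1+1}=N_{L\colon K}(L^*)=K^*$ generates only a proper subfield), so $A_1,A_4$ are $\phi$-semilinear and $\gamma\in\Gamma L(2,L)^3$ with all three associated automorphisms equal to $\phi$; the case $\sigma_1=\tau_2$, $\tau_1=\sigma_2$ is identical after swapping the two rows of $N_1$, giving $N_1$ anti-diagonal. Of all of this, the conjugacy of $P$ and $Q$ in the second step is the hard part; the later steps are comparatively mechanical once the biprojective rigidity is in place.
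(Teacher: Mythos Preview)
The paper does not give its own proof of this theorem: it explicitly states that the result ``was proven by G\"olo\u{g}lu and the author in \cite[Theorem~5.10]{golouglu2023exponential} for commutative biprojective semifields and generalized to non-commutative biprojective semifields in~\cite[Theorem~3]{golouglu2023counting}'' and simply cites it as input. Your proposal correctly identifies this and sketches the argument from those references --- the Sylow-$p'$ conjugacy step, the normaliser argument forcing $N_2,N_3\in\Gamma L(2,L)$, and the monomial comparison extracting the constraints on $\sigma_2,\tau_2$ and the (anti-)diagonal form of $N_1$ --- which is exactly the intended route; so there is nothing to compare, and your outline is faithful to the cited proofs.
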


We start by proving Condition~\eqref{eq:condition} for the semifields $\S_{\sigma,\tau,\alpha,\eta}$.

\begin{lemma} \label{lem:condition}
	Let $\S=\S_{\sigma,\tau,\alpha,\eta}$ be the semifield with multiplication $\circ$ and $\sigma \notin \{\tau,\overline{\tau}\}$ and $\sigma^2 \neq \id$. Then 
	$|C_{\S}| =(p^m-1)\cdot \gcd(p^k+1,p^m-1) \cdot (p^{\gcd(k,m)}-1)$.
	In particular, Condition~\eqref{eq:condition} is satisfied.
\end{lemma}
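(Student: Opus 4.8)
The plan is to compute the centralizer $C_{\S}=C_{\Aut(\S)}(Z_R^{(\sigma,\tau)})$ explicitly. Since $R$ is a $p'$-subgroup acting on $L^2$ as an $\F_p$-space via the diagonal multiplications $\diag(m_{r^{\sigma+1}},m_{r^{\tau+1}})$ (in the first coordinate of the isotopism triple) and $\diag(m_r,m_r)$ (in the second and third), and since $p'$ is a $p$-primitive prime divisor of $p^m-1$, the eigenspace structure forces any $\F_p$-linear map commuting with $Z_R^{(\sigma,\tau)}$ to respect the $L$-module structure in a rigid way. Concretely, first I would argue that a linear map $A=\left(\begin{smallmatrix}A_1 & A_2\\ A_3 & A_4\end{smallmatrix}\right)$ on $L^2$ commuting with all $\gamma_r$, $r\in R$, must have each subfunction $A_i$ be either $0$ or an $\F_p$-semilinear (in fact $L$-semilinear) scalar multiplication composed with a fixed field automorphism; the $p$-primitivity of $p'$ rules out all "mixed" Frobenius twists except those compatible with the exponents $\sigma+1$, $\tau+1$, $1$ appearing in $Z_R$. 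This is the standard argument underlying Theorem~\ref{thm:biproj}, and since $\sigma\notin\{\tau,\overline\tau\}$ and $\sigma^2\neq\id$ the two coordinates cannot be swapped or mixed, so the centralizer lands in block-diagonal semilinear maps.

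Next I would translate the autotopism condition. An element of $C_{\S}$ is an autotopism $\gamma=(N_1,N_2,N_3)$ with $N_2,N_3$ diagonal and $N_1$ a diagonal semilinear map, all sharing the same associated field automorphism $\mu$; write $N_2=\diag(m_a,m_b)$, $N_3=\diag(m_c,m_d)$ and $N_1=\diag(m_e,m_f)\circ\mu$. Plugging these into $N_1(\mathbf x\circ\mathbf y)=N_2(\mathbf x)\circ N_3(\mathbf y)$ with $\circ$ as in Eq.~\eqref{eq:ZP_final} and comparing the two homogeneous components term by term yields a system of multiplicative equations in $a,b,c,d,e,f$ and the automorphism $\mu$. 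From the second component ($x_0^\tau y_1 + x_1 y_0^\tau$) one reads off relations like $f = e' a^\tau d$ and $f = e' b c^\tau$ (up to applying $\mu$), and from the first component (the four terms $x_1y_1^\sigma$, $x_1^\sigma y_1$, $x_0y_0^\sigma$, $x_0^\sigma y_0$) one gets the remaining constraints tying $e$ to $b^{\sigma+1}$-type and $a^{\sigma+1}$-type expressions; matching the $\eta$ and $\alpha$ coefficients forces $\mu$ to fix $\eta$ and $\alpha$ appropriately and pins down the ratios. Counting the solutions of this system is then a finite-field bookkeeping exercise: the free choices amount to one parameter ranging over $L^*$ (giving the factor $p^m-1$, essentially the group $Z^{(\sigma,\tau)}$ itself), a $\gcd(p^k+1,p^m-1)$-torsion ambiguity coming from the equation determining a scalar up to $(\sigma+1)$-st roots of unity, and a $(p^{\gcd(k,m)}-1)$-fold ambiguity from the choice of $\mu$ compatible with $\sigma$ (the automorphisms fixing the relevant data form a coset, and the associated scalar freedom contributes the norm-kernel-type factor $p^{\gcd(k,m)}-1$). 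Multiplying gives $|C_{\S}|=(p^m-1)\cdot\gcd(p^k+1,p^m-1)\cdot(p^{\gcd(k,m)}-1)$.

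Finally, to deduce Condition~\eqref{eq:condition}, I note $Z^{(\sigma,\tau)}\le C_{\S}$ has order $p^m-1$, so its index is $I=\gcd(p^k+1,p^m-1)\cdot(p^{\gcd(k,m)}-1)$; since $p'$ is a primitive prime divisor of $p^m-1$ it divides neither $p^{\gcd(k,m)}-1$ (as $\gcd(k,m)<m$, using $\sigma^2\neq\id$ so $\gcd(k,m)\neq m$) nor $p^k+1$ (a primitive divisor of $p^m-1$ divides $p^k+1$ only when $2k\equiv 0$ but $k\not\equiv 0 \pmod m$, which would force $p^m-1\mid p^{2k}-1$ with $2k$ a proper-ish multiple — more simply, $p'\mid p^k+1$ implies $p'\mid p^{2k}-1$ hence $m\mid 2k$, and combined with $m\nmid k$ this gives $m=2\gcd(k,m)$, i.e. $\sigma^2=\id$ after adjusting, contradicting the hypothesis; one handles the remaining small cases directly). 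Hence $p'\nmid I$, establishing \eqref{eq:condition}.

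\textbf{Main obstacle.} The delicate part is the first step: rigorously pinning down that every element of $C_{\S}$ is block-diagonal and semilinear with a common automorphism, i.e. extracting from the $p$-primitivity of $p'$ exactly which subfunctions can be nonzero. This is precisely where the group-theoretic machinery of~\cite{golouglu2023exponential,golouglu2023counting} does the heavy lifting, and care is needed to invoke it correctly given the standing assumptions $\sigma\notin\{\tau,\overline\tau\}$, $\sigma^2\neq\id$. The subsequent solving-and-counting of the multiplicative system is routine but must be done carefully to get the exact gcd factors rather than just an order-of-magnitude bound.
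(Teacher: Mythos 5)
Your overall strategy matches the paper's (use the centralizer condition to restrict the shape of $(N_1,N_2,N_3)$, plug into the autotopism equation, compare coefficients, count solutions), but the first step and the source of the third factor are both off in ways that would derail an actual calculation.

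The centralizer condition is \emph{stronger} than you use. Because $p'$ is a $p$-primitive divisor and $R$ therefore generates $L$ as an $\F_p$-algebra, any $\F_p$-linear map commuting with $\diag(m_r,m_r)$ for all $r\in R$ commutes with $\diag(m_\ell,m_\ell)$ for all $\ell\in L$, hence is honestly $L$-linear — not merely $L$-semilinear. The paper's proof uses exactly this (citing the relevant lemma) to put $N_2,N_3\in\GL(2,L)$ with \emph{no} twisting automorphism; the diagonality (vanishing of $b_i,c_i$) then comes not from the centralizer condition but from comparing monomials in the autotopism identity. Your claim that elements of $C_{\S}$ are "block-diagonal semilinear maps with a common automorphism $\mu$" is therefore the wrong starting point: there is no nontrivial $\mu$ in $C_\S$. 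That statement is the shape of Theorem~\ref{thm:biproj} (which concerns isotopisms between two possibly different semifields after the group-theoretic reduction), not the shape of the centralizer $C_\S$ itself.

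Consequently, your accounting of the factor $p^{\gcd(k,m)}-1$ as coming "from the choice of $\mu$ compatible with $\sigma$" cannot be right: there are only $m$ automorphisms of $L$, not $p^{\gcd(k,m)}-1$ of them, and in any case all elements of $C_\S$ are $L$-linear. In the paper's computation this factor arises from a free scalar parameter: after deducing $a_2^{\sigma-1}=a_3^{\sigma-1}$ and $d_2^{\sigma-1}=d_3^{\sigma-1}$ one writes $a_3=a_2\zeta_1$, $d_3=d_2\zeta_2$ with $\zeta_1,\zeta_2\in K^*$ (the fixed field of $\sigma$), finds $\zeta_2=\zeta_1^\tau$, and shows that the compatibility equation $(a_2/d_2)^{\sigma+1}=\zeta_1^{\tau-1}$ is solvable for \emph{every} $\zeta_1\in K^*$. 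The $p^{\gcd(k,m)}-1=|K^*|$ choices of $\zeta_1$, times $p^m-1$ choices of $a_2$, times $\gcd(p^k+1,p^m-1)$ residual choices of $d_2$, give the count. Your remaining factors $(p^m-1)$ and $\gcd(p^k+1,p^m-1)$ have roughly the right provenance, and your verification that $p'\nmid I$ is essentially correct (modulo the observation that $k<m/2$ after the reductions preceding the lemma), but the centralizer structure and the origin of the $|K^*|$ factor need to be fixed before the counting can actually be carried out as you describe.
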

\begin{proof}
	Let $\gamma=(N_1,N_2,N_3) \in C_{\S_1}$.
	By~\cite[Lemma 5.7.]{golouglu2023exponential}, $N_2,N_3 \in \GL(2,L)$.  So let $N_i=\left(\begin{smallmatrix} a_i & b_i \\ c_i & d_i\end{smallmatrix} \right)$ for $i \in \{2,3\}$. The second component of the equation $N_1(\left(\begin{smallmatrix} x_0 \\ x_1\end{smallmatrix} \right)\circ \left(\begin{smallmatrix} y_0 \\ y_1\end{smallmatrix} \right))=N_2\left(\begin{smallmatrix} x_0 \\ x_1\end{smallmatrix} \right) \circ N_3\left(\begin{smallmatrix} y_0 \\ y_1\end{smallmatrix} \right)$ is then
		\begin{align*}
		A_3(x_1y_1^\sigma-\eta x_1^\sigma y_1+&\alpha(x_0y_0^\sigma-\eta x_0^\sigma y_0))+ A_4(x_0^\tau y_1+x_1y_0^\tau) \\&=(a_2x_0+b_2x_1)^\tau(c_3y_0+d_3y_1) + (c_2x_0+d_2x_1)(a_3y_0+b_3y_1)^\tau,
		\end{align*}
		where $A_3,A_4 \in \End_{\F_p}(L)$. By comparing coefficients, it is immediately clear that $A_3=0$ and $A_4(x)=d_1x$ for some $d_1 \in L^*$. The equation is then satisfied if and only if the following conditions are satisfied (comparing coefficients for all monomials):
	\begin{equation} \label{eq:conditions_1}
	d_1=a_2^\tau d_3=d_2a_3^\tau \text{ and } 0=a_2c_3=b_2c_3=b_2d_3=c_2a_3=d_2b_3,
	\end{equation}
	implying $b_2=b_3=c_2=c_3=0$. We then consider the first component similarly, getting:
			\begin{align*}
		A_1(x_1y_1^\sigma-\eta x_1^\sigma y_1+&\alpha(x_0y_0^\sigma-\eta x_0^\sigma y_0))+ A_2(x_0^\tau y_1+x_1y_0^\tau) \\&= d_2 d_3^\sigma x_1 y_1^\sigma-\eta d_2^\sigma d_3 x_1^\sigma y_1 + \alpha (a_2a_3^\sigma x_0y_0^\sigma - \eta a_2^\sigma a_3 x_0^\sigma y_0),
		\end{align*}
		where $A_1,A_2 \in \End_{\F_p}(L)$.
		Similarly, this implies $A_2=0$, $A_1(x)=a_1x$ for $a_1 \in L^*$ and 
		\[a_1=a_2^\sigma a_3=a_2a_3^\sigma=d_2d_3^\sigma=d_2^\sigma d_3,\]
		in particular $a_2^{\sigma-1}=a_3^{\sigma-1}$ and $d_2^{\sigma-1}=d_3^{\sigma-1}$. So write $a_3=a_2 \zeta_1$, $d_3=d_2 \zeta_2$ where $\zeta_1,\zeta_2 \in K^*$ and $K$ is the fixed field of $\sigma$. Plugging this into Eq.~\eqref{eq:conditions_1} yields $\zeta_2=\zeta_1^\tau$. 
		$a_2a_3^\sigma=d_2d_3^\sigma$ then implies $(a_2/d_2)^{\sigma+1}=\zeta_1^{\tau-1}$. Note that for any $\zeta_1$ we can find $a_2,d_2$ satisfying this equation (since $x\mapsto x^{\sigma+1}$ is just the square-function on $K$, so we can choose $(a_2/d_2)=\zeta_1^{(p^l-1)/2}$ if $p$ is odd, for even $p$ the function $x\mapsto x^{\sigma+1}$ is even bijective on $K$ and allows a solution immediately). We thus have for fixed $\zeta_1$ exactly $(p^m-1)\cdot \gcd(p^k+1,p^m-1)$ many solutions, since we can choose $a_2 \in L^*$ arbitrarily and then $d_2$ uniquely up to a $(p^k+1)$-st power.
		
		We conclude that $|C_{\S_1}|=(p^m-1)\cdot \gcd(p^k+1,p^m-1) \cdot (p^{\gcd(k,m)}-1)$. Note that $p'$ does not divide $\gcd(p^k+1,p^m-1)$ (since otherwise it would divide $p^{2k}-1$ which is impossible since $p'$ is a $p$-primitive divisor of $p^m-1$ and $2k\neq m$). We conclude that  Condition~\eqref{eq:condition} is satisfied.
\end{proof}

We can thus apply Theorem~\ref{thm:biproj} to the semifields $\S_{\sigma,\tau,\alpha,\eta}$. By Propositions~\ref{prop:halbecase} and \ref{prop:isotopies}, we may assume without loss of generality that $k<m/2$, $l\leq m/2$. If we exclude the special cases $2l=m$ and $k=l$, Theorem~\ref{thm:biproj} then implies that $\S_{\sigma_1,\tau_1,\alpha_1,\eta_1}$ can only be isotopic to $\S_{\sigma_2,\tau_2,\alpha_2,\eta_2}$ if $\sigma_1=\sigma_2$ and $\tau_1=\tau_2$ or $\sigma_1=\tau_2$ and $\tau_1=\sigma_2$. We first show that the second case cannot occur:
\begin{proposition} \label{prop:antidiag}
	Let $\S_1=(L^2,+,\circ_1)=\S_{\sigma,\tau,\alpha_1,\eta_1}$ and $\S_2=(L^2,+,\circ_2)=\S_{\tau,\sigma,\alpha_2,\eta_2}$ be two semifields defined on $L^2$ with $\sigma \colon x \mapsto x^{p^k}$, $\tau \colon x \mapsto x^{p^l}$, $k,l<m/2$ and $k\neq l$. Then $\S_1$ and $\S_2$ are not isotopic.
\end{proposition}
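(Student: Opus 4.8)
The plan is to assume for contradiction that there is an isotopism $\gamma = (N_1, N_2, N_3)$ between $\S_1$ and $\S_2$, and to exploit the structural restrictions that Theorem~\ref{thm:biproj} forces in the case $\sigma_1 = \tau_2$, $\tau_1 = \sigma_2$. First I would verify that the hypotheses of Theorem~\ref{thm:biproj} apply: since $k, l < m/2$ we have $\sigma^2 \neq \id$ and $\tau^2 \neq \id$, and $k \neq l$ together with $k,l < m/2$ gives $\sigma \notin \{\tau, \overline{\tau}\}$; Condition~\eqref{eq:condition} holds by Lemma~\ref{lem:condition}. Theorem~\ref{thm:biproj} then tells us that $\gamma \in \Gamma L(2,L)^3$, that $N_1$ is an \emph{anti-diagonal} matrix (of semilinear maps on $L$), and that $N_1, N_2, N_3$ share the same associated field automorphism, say $\mu$.

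The core of the argument is then a direct coefficient comparison, exactly in the style of the proof of Lemma~\ref{lem:condition}. Writing $N_2 = \left(\begin{smallmatrix} a_2 m_\mu & 0 \\ 0 & d_2 m_\mu \end{smallmatrix}\right)$-type shapes (the off-diagonal entries of $N_2, N_3$ will again be forced to vanish, as in Lemma~\ref{lem:condition}, once one writes out both components), and $N_1$ anti-diagonal, I would plug into $N_1(\mathbf{x} \circ_1 \mathbf{y}) = N_2(\mathbf{x}) \circ_2 N_3(\mathbf{y})$. The left-hand side, after applying the anti-diagonal $N_1$, swaps the two components of $\mathbf{x} \circ_1 \mathbf{y}$: the "$\sigma$-homogeneous" component of $\circ_1$ (the first one, carrying exponents $1$ and $p^k$) gets moved into the second slot, where $\circ_2$ expects a $\sigma$-homogeneous component (since $\S_2 = \S_{\tau,\sigma,\ldots}$ has its \emph{second} component $(p^{k}+1)$-homogeneous after the swap of roles). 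So the homogeneity degrees are compatible — that is exactly why this case is not immediately killed by degree counting, and why one must descend to the actual coefficients. Comparing the monomials $x_0 y_0^{\mu}$, $x_0^{\mu\sigma} y_0^{\mu}$, etc., on both sides will produce a rigid system of equations relating $a_2, d_2, a_3, d_3$ (all multiplied through by powers of $\mu$) to the structure constants $\alpha_1, \eta_1$ on the left and $\alpha_2, \eta_2$ on the right.

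The main obstacle I anticipate is extracting a genuine contradiction rather than merely a constraint: the coefficient system will generically \emph{have} solutions in the $a_i, d_i$ for suitable $\alpha_i, \eta_i$, so the contradiction must come from a rigidity that no choice of scalars can fix. The key observation should be that the anti-diagonal $N_1$ forces a \emph{mismatch between the two field automorphisms appearing within a single component}. Concretely: in $\circ_1$, the first component mixes automorphisms $\id$ and $\sigma$ while the second mixes $\id$ and $\tau$; under the anti-diagonal swap these two "automorphism signatures" are exchanged, so to match $\circ_2$ (whose components carry signatures $\{\id,\tau\}$ and $\{\id,\sigma\}$ respectively) one needs $\{\id,\sigma\}\cdot\mu = \{\id,\tau\}$ as multisets of automorphisms — i.e. $\mu = \id$ and $\sigma = \tau$, or $\sigma\mu = \id$ forcing $\mu = \overline{\sigma}$ and then $\mu = \tau$, i.e. $\overline{\sigma} = \tau$. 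Both conclusions contradict $k \neq l$ with $k, l < m/2$ (which rules out $\sigma = \tau$ and $\sigma = \overline{\tau}$). I would phrase this cleanly by tracking, for each monomial $x_i^{?} y_j^{?}$ in the expansion, which Frobenius exponent attaches to the $x$-variable and which to the $y$-variable, and arguing that the anti-diagonal form of $N_1$ makes these exponent patterns irreconcilable unless $\{\sigma, \tau\}$ collapses. Once the automorphism bookkeeping delivers the contradiction, no further computation with $\alpha_i, \eta_i$ is needed.
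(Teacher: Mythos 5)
Your setup is fine, and your intermediate observation is actually sharper than the paper's phrasing: after the anti-diagonal swap the homogeneity degrees on both sides of the second component of the isotopism equation \emph{do} match (the second component of $\circ_2$ for $\S_2=\S_{\tau,\sigma,\ldots}$ carries the automorphism $\sigma$, just like the first component of $\circ_1$; the $\tau$ appearing on the right-hand side of the displayed equation in the paper's proof looks like a copy error, and without it the paper's ``by comparing the degrees'' phrase is not literally what happens). So you are right that a pure degree count does not kill this case and one must go to coefficients.

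However, the ``key observation'' you then use to avoid the coefficient computation is wrong, and this is a genuine gap. You claim the anti-diagonal $N_1$ forces a multiset constraint $\{\id,\sigma\}\cdot\mu=\{\id,\tau\}$, but you have dropped the $\mu$ coming from $N_2$ and $N_3$: they are $\mu$-semilinear, so feeding $N_2(\mathbf{x}), N_3(\mathbf{y})$ into the second component of $\circ_2$ (signature $\{\id,\sigma\}$) produces monomials with Frobenius pattern $\{\mu,\sigma\mu\}$ on the right-hand side, which is exactly $\{\id,\sigma\}\cdot\mu$, the same as on the left. There is no automorphism mismatch, and consequently your final claim ``no further computation with $\alpha_i,\eta_i$ is needed'' is exactly backwards. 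The contradiction comes from the coefficient system itself and uses $\alpha_1\neq 0$ in an essential way: the left-hand side of the second component contains only same-index monomials $x_i^{\mu}y_i^{\sigma\mu}$ and $x_i^{\sigma\mu}y_i^{\mu}$, all with nonzero coefficients (as $c_1,\alpha_1\neq 0$), while the right-hand side also produces cross-index monomials whose coefficients must vanish. Concretely, the coefficient of $x_1^{\mu}y_1^{\sigma\mu}$ gives $d_2b_3^{\sigma}=c_1\neq 0$ so $d_2,b_3\neq 0$; the vanishing coefficient of $x_1^{\mu}y_0^{\sigma\mu}$ gives $d_2a_3^{\sigma}=0$ so $a_3=0$; but then the coefficient of $x_0^{\mu}y_0^{\sigma\mu}$ gives $c_2a_3^{\sigma}=c_1\alpha_1^{\mu}\neq 0$, a contradiction. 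That computation, which you set up but did not carry out, is where the proof actually lives.
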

\begin{proof}
		Assume we have an isotopism $(N_1,N_2,N_3)$, i.e.
	\begin{equation}
		N_1(\left(\begin{smallmatrix}x_0 \\x_1\end{smallmatrix}\right)\circ_1 \left(\begin{smallmatrix}y_0 \\y_1\end{smallmatrix}\right))=N_2\left(\begin{smallmatrix}x_0 \\x_1\end{smallmatrix}\right) \circ_2 N_3\left(\begin{smallmatrix}y_0 \\y_1\end{smallmatrix}\right).
	\label{eq:general2}
	\end{equation}
	By Theorem~\ref{thm:biproj}, $\gamma=(N_1,N_2,N_3) \in \Gamma L (2,L)$, $N_1$ is an anti-diagonal matrix and $N_1,N_2,N_3$ have the same associated field automorphism $\rho$. So let $N_i=\left(\begin{smallmatrix} a_i & b_i \\ c_i & d_i\end{smallmatrix} \right)$ for $i \in \{1,2,3\}$ where $a_1=d_1=0$. The second component of Eq.~\eqref{eq:general2} is:
			\begin{align*}
		c_1\left(x_1y_1^\sigma-\eta_1 x_1^\sigma y_1+\alpha_1(x_0y_0^\sigma-\eta_1 x_0^\sigma y_0)\right)^\rho =((a_2^{\overline{\rho}}x_0&+b_2^{\overline{\rho}}x_1)^{\tau}(c_3^{\overline{\rho}}y_0+d_3^{\overline{\rho}}y_1) \\&+ (c_2^{\overline{\rho}}x_0+d_2^{\overline{\rho}}x_1)(a_3^{\overline{\rho}}y_0+b_3^{\overline{\rho}}y_1)^{\tau})^\rho.
		\end{align*}
	By comparing the degrees of the monomials on both sides of the equation (recalling that $\tau \notin \{\sigma,\overline{\sigma}\}$), it is immediate that no isotopism exists.
\end{proof}

With this case out of the way, we can finish the complete result.

\begin{theorem} \label{thm:isotop}
	Let $\S_1=(L^2,+,\circ_1)=\S_{\sigma,\tau,\alpha_1,\eta_1}$ and $\S_2=(L^2,+,\circ_2)=\S_{\sigma_2,\tau_2,\alpha_2,\eta_2}$ be two semifields defined on $L^2$ with $\sigma \colon x \mapsto x^{p^k}$, $\tau \colon x \mapsto x^{p^l}$, $k,l<m/2$ and $k\neq l$. Let $K$ be the fixed field of $\sigma$. $\S_1$ and $\S_2$ are isotopic if and only if $\sigma_2=\sigma$, $\tau_2=\tau$, and there exists a field automorphism $\rho$ of $L$ such that
	\begin{itemize}
		\item $N_{L\colon K}(\eta_1)^\rho=N_{L\colon K}(\eta_2)$, and 
		\item $\frac{\alpha_1^{\rho}}{\alpha_2} \in L^{\sigma+1}L^{\tau-1}$. 
	\end{itemize}
\end{theorem}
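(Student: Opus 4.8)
The plan is to leverage Theorem~\ref{thm:biproj} together with Propositions~\ref{prop:halbecase}, \ref{prop:isotopies}, \ref{prop:antidiag} and Lemma~\ref{lem:condition} to reduce the isotopy question to a concrete coefficient comparison, and then to extract the two stated numerical conditions. First I would prove the ``if'' direction by explicitly exhibiting isotopisms: given a field automorphism $\rho$ with $N_{L\colon K}(\eta_1)^\rho = N_{L\colon K}(\eta_2)$ and $\alpha_1^\rho/\alpha_2 \in L^{\sigma+1}L^{\tau-1}$, write $\alpha_1^\rho/\alpha_2 = s^{\sigma+1}t^{\tau-1}$ for suitable $s,t \in L^*$ and build a diagonal isotopism $(N_1,N_2,N_3)$ with $N_2 = N_3$-type entries carrying the field automorphism $\rho$ and scalars chosen from $s$, $t$, and an element realizing the norm relation for $\eta$. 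Plugging the ansatz $N_i = \diag(a_i\rho, d_i\rho)$ into Eq.~\eqref{eq:ZP_final} and matching monomials gives a small linear/multiplicative system in the $a_i,d_i$; the norm condition on $\eta$ is exactly what makes the $\eta$-monomials match, and the coset condition on $\alpha$ is exactly what makes the $\alpha$-monomials match. This is a direct verification along the lines of the proof of Lemma~\ref{lem:condition}.

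For the ``only if'' direction, I would start from an isotopism $(N_1,N_2,N_3)$ between $\S_1$ and $\S_2$. By Proposition~\ref{prop:halbecase} the hypothesis $\sigma^2 \neq \id$ is in force (if $\sigma^2 = \id$ the semifield is a generalized Dickson semifield and falls outside this family's generic analysis), and by Lemma~\ref{lem:condition} Condition~\eqref{eq:condition} holds, so Theorem~\ref{thm:biproj} applies: we get $\{\sigma_2,\tau_2\} \subseteq \{\sigma,\overline\sigma,\tau,\overline\tau\}$, and since $k,l < m/2$ and $k \neq l$, combined with Proposition~\ref{prop:isotopies} (which lets us normalize $\tau$ vs.\ $\overline\tau$ and $\sigma$ vs.\ $\overline\sigma$ at the cost of replacing $\alpha \mapsto 1/\alpha$, $\eta \mapsto 1/\eta$) we may assume $\sigma_2 = \sigma$, $\tau_2 = \tau$; the anti-diagonal case $\sigma_2 = \tau$, $\tau_2 = \sigma$ is excluded by Proposition~\ref{prop:antidiag}. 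Theorem~\ref{thm:biproj} then forces $\gamma \in \Gamma L(2,L)^3$ with $N_1$ diagonal and all three $N_i$ sharing the same associated field automorphism, call it $\rho$. Writing $N_i = \left(\begin{smallmatrix} a_i\rho & b_i\rho \\ c_i\rho & d_i\rho\end{smallmatrix}\right)$, I would substitute into the isotopy equation and compare coefficients of monomials in $x_0,x_1,y_0,y_1$ exactly as in Lemma~\ref{lem:condition}: the second component (which is $(p^l+1)$-homogeneous with only the two monomials $x_0^\tau y_1$, $x_1 y_0^\tau$) immediately kills the off-diagonal entries of $N_2$ and $N_3$, and the first component then yields a system relating $a_2,a_3,d_2,d_3$ to $\alpha_1^\rho,\alpha_2,\eta_1^\rho,\eta_2$. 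Solving it, the $\eta$-monomials give $\eta_1^\rho (a_2^\sigma a_3) = \eta_2 (a_2 a_3^\sigma) \cdot(\text{something})$ which, after tracking the $\sigma$-twist of the diagonal scalars, collapses to $N_{L\colon K}(\eta_1)^\rho = N_{L\colon K}(\eta_2)$; the $\alpha$-monomials give $\alpha_1^\rho / \alpha_2$ as a product of a $(\sigma+1)$-st power (from the $a_2/d_2$ type ratio, as in Lemma~\ref{lem:condition}) and a $(\tau-1)$-st power (from matching the second component's scaling), i.e.\ membership in $L^{\sigma+1}L^{\tau-1}$.

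I expect the main obstacle to be the bookkeeping in the coefficient comparison of the first component: because $\rho$ need not commute cleanly with $\sigma$ and $\tau$ in an obvious way, one must carefully apply $\overline\rho$ or $\rho$ to each side before matching, and keep track of which scalars appear to the $\sigma$-power and which to the first power, so that the final identities genuinely reduce to a \emph{norm} $N_{L\colon K}(\eta)$ (rather than just $\eta^{\sigma+1}$ up to a twist) and to a product of cosets that is symmetric enough to be rewritten as the single coset $L^{\sigma+1}L^{\tau-1}$. A secondary subtlety is confirming that the normalizations from Proposition~\ref{prop:isotopies} are consistent --- i.e.\ that replacing $(\sigma,\tau,\alpha,\eta)$ by $(\sigma,\overline\tau,1/\alpha,\eta)$ or $(\overline\sigma,\tau,\alpha,1/\eta)$ does not change the truth value of the two stated conditions --- which amounts to checking that $L^{\sigma+1}L^{\tau-1} = L^{\sigma+1}L^{\overline\tau-1}$ (true since $L^{\tau-1}=L^{\overline\tau-1}$ as both equal the group of $\gcd(p^l-1,p^m-1)$-th... actually $(\tau-1)$-powers form a subgroup closed under inversion of the exponent map) and that the norm condition is invariant under $\eta \mapsto 1/\eta$ together with $\rho \mapsto \rho$. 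These are short group-theoretic checks but need to be stated explicitly for completeness. Finally, the edge cases $2l = m$ and $k = l$ are deliberately excluded in the hypotheses, matching the restrictions in Theorem~\ref{thm:biproj}, so no separate argument is needed there.
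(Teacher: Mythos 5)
Your plan matches the paper's own proof very closely: both establish the only-if direction by invoking Theorem~\ref{thm:biproj} (justified by Lemma~\ref{lem:condition}'s verification of Condition~(C)), disposing of the anti-diagonal possibility via Proposition~\ref{prop:antidiag}, and then carrying out a monomial-by-monomial coefficient comparison in Eq.~\eqref{eq:ZP_final} exactly as in the proof of Lemma~\ref{lem:condition}; and the two numerical conditions fall out of the resulting multiplicative system just as you describe --- the $\eta$-relation via $\eta_1^\rho/\eta_2 = (a_2/a_3)^{\sigma-1} = (d_2/d_3)^{\sigma-1}$ (Hilbert~90 turns a $(\sigma-1)$-st power into a norm-one element) and the $\alpha$-relation via the identity $z^{\tau-1}(d_3/a_3)^{\sigma+1}\,\alpha_1^\rho/\alpha_2 = 1$. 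The paper in fact handles both directions simultaneously: the final solvability condition on $z,a_3,d_3$ is an if-and-only-if, so a separate explicit construction for the ``if'' direction is unnecessary (though certainly a valid alternative presentation). One small remark: since the theorem's hypotheses already impose $k,l<m/2$, $k\neq l$, the concern you raise about consistency under the $\sigma\mapsto\overline\sigma$ and $\tau\mapsto\overline\tau$ normalizations of Proposition~\ref{prop:isotopies} does not actually arise in the proof of this particular statement --- those substitutions are used earlier in the text to reduce to the normalized parameter range, not inside the proof of Theorem~\ref{thm:isotop} itself, where Theorem~\ref{thm:biproj} plus the hypotheses already force $\sigma_2=\sigma$ and $\tau_2=\tau$ or the excluded swap.
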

\begin{proof}
With Proposition~\ref{prop:antidiag} it remains to deal with the question $\sigma_2=\sigma$, $\tau_2=\tau$. 
		The proof is very similar to the one in Lemma~\ref{lem:condition}. 
We consider when 
	\begin{equation}
		N_1(\left(\begin{smallmatrix}x_0 \\x_1\end{smallmatrix}\right)\circ_1 \left(\begin{smallmatrix}y_0 \\y_1\end{smallmatrix}\right))=N_2\left(\begin{smallmatrix}x_0 \\x_1\end{smallmatrix}\right) \circ_2 N_3\left(\begin{smallmatrix}y_0 \\y_1\end{smallmatrix}\right).
	\label{eq:general}
	\end{equation}
	By Theorem~\ref{thm:biproj}, $\gamma=(N_1,N_2,N_3) \in \Gamma L (2,L)$, $N_1$ is a diagonal matrix and $N_1,N_2,N_3$ have the same associated field automorphism $\rho$. So let $N_i=\left(\begin{smallmatrix} a_i & b_i \\ c_i & d_i\end{smallmatrix} \right)$ for $i \in \{1,2,3\}$ where $b_1=c_1=0$. The second component of Eq.~\eqref{eq:general} is:
			\begin{equation*}
		d_1\left(x_0^\tau y_1+x_1y_0^\tau\right)^\rho =\left((a_2^{\overline{\rho}}x_0+b_2^{\overline{\rho}}x_1)^{\tau}(c_3^{\overline{\rho}}y_0+d_3^{\overline{\rho}}y_1) + (c_2^{\overline{\rho}}x_0+d_2^{\overline{\rho}}x_1)(a_3^{\overline{\rho}}y_0+b_3^{\overline{\rho}}y_1)^{\tau}\right)^\rho.
		\end{equation*}
		Comparing coefficients leads to: $b_2=b_3=c_2=c_3=0$ and $d_1^\rho=a_2^\tau d_3=d_2a_3^\tau$. The first component of Eq.~\eqref{eq:general} then is:
		\begin{align*}
	a_1(x_1y_1^\sigma-\eta_1 x_1^\sigma y_1+&\alpha_1(x_0y_0^\sigma-\eta_1 x_0^\sigma y_0))^\rho \\& = \left(\left(d_2 d_3^\sigma\right)^{\overline{\rho}}x_1 y_1^\sigma-\eta_2^{\overline{\rho}} \left(d_2^\sigma d_3\right)^{\overline{\rho}} x_1^\sigma y_1 + \alpha_2^{\overline{\rho}}(\left(a_2a_3^\sigma\right)^{\overline{\rho}} x_0y_0^\sigma - \eta_2^{\overline{\rho}} \left(a_2^\sigma a_3\right)^{\overline{\rho}} x_0^\sigma y_0)\right)^{\rho}.
		\end{align*}
		We derive the equations:
		\begin{equation}
				a_1^\rho = d_2d_3^\sigma, \;\; a_1^\rho\frac{\eta_1^\rho}{\eta_2} = d_2^\sigma d_3,\;\; a_1^\rho\frac{\alpha_1^\rho}{\alpha_2} = a_2 a_3^\sigma,\;\;a_1^\rho\frac{(\alpha_1\eta_1)^\rho}{\alpha_2\eta_2} = a_2^\sigma a_3.
		\label{eq:conditions}
		\end{equation}

		Comparing the first two equations, and the last two equations in \eqref{eq:conditions} leads to 
		\[\frac{\eta_1^\rho}{\eta_2} = \left(\frac{d_2}{d_3}\right)^{\sigma-1}=\left(\frac{a_2}{a_3}\right)^{\sigma-1}.\]
		Note that the $(\sigma-1)$-st powers are exactly the elements $x \in L$ such that $N_{L\colon K}(x)=1$. This implies the condition $N_{L\colon K}(\eta_1)^\rho=N_{L\colon K}(\eta_2)$. Let $z=a_2/a_3$, then $d_2/d_3=z\zeta$ where $\zeta \in K$. Comparing the first and the third equation in \eqref{eq:conditions} finally gives
		\[z\zeta d_3^{\sigma+1}\frac{\alpha_1^\rho}{\alpha_2}=za_3^{\sigma+1}.\]
		From the second component, we had $a_2^\tau d_3=d_2a_3^\tau$, which is equivalent to $\zeta=z^{\tau-1}$. The previous condition then turns into 
		\[z^{\tau-1}\left(\frac{d_3}{a_3}\right)^{\sigma+1}\frac{\alpha_1^\rho}{\alpha_2}=1.  \]
		We can find $z,a_3,d_3$ satisfying this condition if and only if $\frac{\alpha_1^\rho}{\alpha_2} \in L^{\sigma+1}L^{\tau-1}$. With this, all conditions are satisfied and the theorem is proven.
\end{proof}

\begin{remark}
		If $m/\gcd(k,m)$ is odd, then the set of $(\sigma+1)$-st powers is exactly the set of squares by Lemma~\ref{lem:gcd} and the second condition of Theorem~\ref{thm:isotop} is always satisfied. This recovers the isotopy result of the Zhou-Pott semifields in~\cite{ZP13}. Indeed, the Zhou-Pott family of semifields contain only $O(m^2)$ non-isotopic semifields of order $p^{2m}$, since all choices of $\alpha$ yield isotopic semifields, so only different choices for $\sigma,\tau$ give non-isotopic semifields. 
\end{remark}

\begin{corollary} \label{cor:ZP}
	The semifield $\S_{\sigma,\tau,\alpha,\eta}$ is isotopic to a semifield from the Zhou-Pott family if and only if $N_{L\colon K}(\eta)=-1$, where $K$ is the fixed field of $\sigma$. 
\end{corollary}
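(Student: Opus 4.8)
The plan is to apply Theorem~\ref{thm:isotop} directly, comparing the semifield $\S_{\sigma,\tau,\alpha,\eta}$ against a representative of the Zhou-Pott family. Recall from Section~\ref{sec:new} that, up to equivalence via the Knuth orbit and the preparatory isotopic transformations performed there, the Zhou-Pott semifield with automorphisms $\sigma,\tau$ and nonsquare $\alp_0$ is precisely $\S_{\sigma,\tau,\alpha_0,-1}$, i.e. the member of our family with parameter $\eta=-1$. (The biprojective normal form in Eq.~\eqref{eq:ZP_final} was arranged exactly so that $\eta=-1$ reproduces the commutative Zhou-Pott multiplication, as noted right before that equation.) So the corollary reduces to: $\S_{\sigma,\tau,\alpha,\eta}$ is isotopic to $\S_{\sigma',\tau',\alpha',-1}$ for \emph{some} admissible choice of $\sigma',\tau',\alpha'$ if and only if $N_{L\colon K}(\eta)=-1$.

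First I would handle the easy direction: if $N_{L\colon K}(\eta)=-1$, take $\rho=\id$, $\sigma'=\sigma$, $\tau'=\tau$, and choose $\alpha'$ in the same coset of $L^{\sigma+1}L^{\tau-1}$ as $\alpha$ (for instance $\alpha'=\alpha$, noting that the Zhou-Pott conditions only require $\alpha$ to avoid that subgroup, which $\alpha$ does since $\S_{\sigma,\tau,\alpha,\eta}$ is a genuine semifield via Proposition~\ref{prop:admissible1}). Then $N_{L\colon K}(\eta_1)^\rho=N_{L\colon K}(\eta)=-1=N_{L\colon K}(\eta_2)$ and $\alpha_1^\rho/\alpha_2=1\in L^{\sigma+1}L^{\tau-1}$, so both conditions of Theorem~\ref{thm:isotop} hold and the two semifields are isotopic. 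Conversely, suppose $\S_{\sigma,\tau,\alpha,\eta}$ is isotopic to some Zhou-Pott semifield, which we write in our normal form as $\S_{\sigma',\tau',\alpha',-1}$. One must first rule out the cases $\sigma^2=\id$ and $\sigma\in\{\tau,\bar\tau\}$ and $2l=m$, $k=l$ — by Proposition~\ref{prop:halbecase} the case $\sigma^2=\id$ lands inside the generalized Dickson semifields, and one checks separately that these are not Zhou-Pott semifields (or that the statement is vacuous / handled by a direct low-dimensional check); the degenerate cases $\sigma=\tau^{\pm1}$ similarly collapse the construction. With those excluded, Theorem~\ref{thm:isotop} forces $\sigma'=\sigma$, $\tau'=\tau$ and the existence of a field automorphism $\rho$ with $N_{L\colon K}(\eta)^\rho=N_{L\colon K}(-1)=(-1)^{[L:K]}$. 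Since $[L:K]=m/\gcd(k,m)$, this reads $N_{L\colon K}(\eta)^\rho=(-1)^{m/\gcd(k,m)}$; now $N_{L\colon K}(\eta)\in K^*$ and $\rho$ restricts to an automorphism of $K$ fixing $\pm1$, so $N_{L\colon K}(\eta)=(-1)^{m/\gcd(k,m)}$. If $m/\gcd(k,m)$ is odd this gives $N_{L\colon K}(\eta)=-1$ directly; if it is even, then the target value is $+1$, but $N_{L\colon K}(\eta)=1$ is forbidden by the defining condition of Construction~\ref{thm:general_structure} — and indeed when $m/\gcd(k,m)$ is even, $-1$ is a $(\sigma+1)$-st power and, as the remark after Theorem~\ref{thm:isotop} records, the $\alpha$-condition becomes automatic, so one should double-check that no Zhou-Pott semifield even exists in that subcase or that the implication still reads "$N_{L:K}(\eta)=-1$". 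Reconciling this even case with the clean statement "$N_{L\colon K}(\eta)=-1$" is the one point needing care.

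The main obstacle is therefore the bookkeeping around which representative of the Zhou-Pott family to compare against and the parity of $[L:K]$: one has to be sure that the normalization in Eq.~\eqref{eq:ZP_final} really does identify Zhou-Pott with the $\eta=-1$ slice (this is asserted in the Zhou-Pott subsection but should be invoked explicitly), and one has to track how the automorphism $\rho$ acts on $N_{L\colon K}(\eta)\in K^*$, using that any $\rho\in\Gal(L)$ sends $\pm1$ to $\pm1$ and maps $K$ to $K$. Everything else is a direct substitution into the biconditional of Theorem~\ref{thm:isotop}, together with the already-established exclusions (Propositions~\ref{prop:halbecase}, \ref{prop:isotopies}, \ref{prop:antidiag}) that let us assume $k,l<m/2$, $k\neq l$, $\sigma^2\neq\id$.
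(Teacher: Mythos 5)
Your approach is exactly the paper's: identify the Zhou-Pott family with the $\eta=-1$ slice $\S_{\sigma,\tau,\alpha,-1}$ and substitute into the biconditional of Theorem~\ref{thm:isotop}. The parity worry you raise actually resolves itself before it can cause trouble: for $\eta=-1$ to be an admissible parameter in Construction~\ref{thm:general_structure} at all one needs $N_{L\colon K}(-1)=(-1)^{[L:K]}\neq 1$, which forces $[L:K]$ odd and $p$ odd --- exactly the standing Zhou-Pott hypothesis that $\sigma$ has odd order. In that regime $N_{L\colon K}(-1)=-1$, every $\rho\in\Gal(L)$ fixes $\pm 1$, and the two conditions of Theorem~\ref{thm:isotop} specialize cleanly to $N_{L\colon K}(\eta)=-1$. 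When $[L:K]$ is even there is simply no Zhou-Pott semifield with the same $\sigma$ (or with $\overline{\sigma}$), and Theorem~\ref{thm:isotop} together with Proposition~\ref{prop:antidiag} rule out any other choice of automorphism pair, so the corollary is to be read under the odd-order convention carried throughout the Zhou-Pott subsection; your instinct to double-check that subcase was sound, and the check closes it off rather than undermining the statement.
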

\begin{proof}
	Follows from Theorem~\ref{thm:isotop}, recalling that the Zhou-Pott semifields are isotopic to a semifield of the form $\S_{\sigma,\tau,\alpha,-1}$. 
\end{proof}

\begin{corollary} \label{cor:count}
  For fixed $L=\F_{p^m}$ and automorphisms $\sigma,\tau \in \Gal(L)$ let $N(m,\sigma,\tau)$ be the number of non-isotopic semifields in the family $\S_{\sigma,\tau,\alpha,\eta}$. Then
	\[(1/m)(|K|-2)(d-1) \leq N(m,\sigma,\tau) \leq (|K|-2)(d-1).\]
	Here $K$ denotes the fixed field of $\sigma$ and $d=\gcd(p^k+1,p^l-1,p^m-1)$.
\end{corollary}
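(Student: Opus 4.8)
The plan is to count the parameter space $\{(\alpha,\eta)\}$ modulo the equivalence relation ``$\S_{\sigma,\tau,\alpha,\eta}$ isotopic to $\S_{\sigma,\tau,\alpha',\eta'}$'' described in Theorem~\ref{thm:isotop}, and then to fold in the at most $m$-fold redundancy coming from the choice of the field automorphism $\rho \in \Gal(L)$. Fix $\sigma,\tau$ with $k,l<m/2$, $k\neq l$; by Theorem~\ref{thm:isotop} (together with Proposition~\ref{prop:antidiag}, which rules out the anti-diagonal case) two semifields $\S_{\sigma,\tau,\alpha_1,\eta_1}$ and $\S_{\sigma,\tau,\alpha_2,\eta_2}$ are isotopic if and only if there is $\rho \in \Gal(L)$ with $N_{L\colon K}(\eta_1)^\rho = N_{L\colon K}(\eta_2)$ and $\alpha_1^\rho/\alpha_2 \in L^{\sigma+1}L^{\tau-1}$. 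Since Construction~\ref{thm:general_structure} only requires $N_{L\colon K}(\eta)\neq 1$, the relevant invariant of $\eta$ is $n:=N_{L\colon K}(\eta)\in K^*\setminus\{1\}$, of which there are $|K|-2$ choices; and the relevant invariant of $\alpha$ is its coset in $L^*/(L^{\sigma+1}L^{\tau-1})$. By the computation already carried out in the proof of Proposition~\ref{prop:admissible1}, $L^{\sigma+1}L^{\tau-1}$ is the group of $d$-th powers in $L^*$ where $d=\gcd(p^k+1,p^l-1,p^m-1)$, so $|L^*/(L^{\sigma+1}L^{\tau-1})|=d$; the admissibility condition in Proposition~\ref{prop:admissible1} excludes exactly the trivial coset (the one containing $1$), leaving $d-1$ admissible cosets.

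So before accounting for $\rho$, there are exactly $(|K|-2)(d-1)$ distinct pairs $(n, \text{coset of }\alpha)$, and each such pair gives a semifield, with two pairs giving isotopic semifields precisely when they lie in the same orbit of the induced action of $\Gal(L)$ (of order $m$) on the set of pairs. The upper bound $N(m,\sigma,\tau)\le (|K|-2)(d-1)$ is then immediate, since distinct orbits correspond to distinct isotopy classes and there are at most $(|K|-2)(d-1)$ pairs. For the lower bound I would use the elementary fact that a group of order $m$ acting on a set of size $(|K|-2)(d-1)$ has at least $\tfrac{1}{m}(|K|-2)(d-1)$ orbits (each orbit has size at most $m$). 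Combining the two bounds gives
\[
(1/m)(|K|-2)(d-1) \;\le\; N(m,\sigma,\tau) \;\le\; (|K|-2)(d-1),
\]
as claimed.

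The one point that needs genuine care — and which I expect to be the main (minor) obstacle — is checking that the invariants $n$ and the $\alpha$-coset are \emph{complete} isotopy invariants of $\S_{\sigma,\tau,\alpha,\eta}$ within the fixed-$(\sigma,\tau)$ family, i.e. that the ``if'' direction of Theorem~\ref{thm:isotop} really does let us replace $\alpha$ by any representative of its coset and $\eta$ by any element with the same $N_{L\colon K}$-value, so that the set of isotopy classes is genuinely in bijection with the $\Gal(L)$-orbits on pairs rather than merely surjecting onto them. This is exactly what the two bullet conditions of Theorem~\ref{thm:isotop} say (taking $\rho=\id$ for the within-coset/within-norm-value part and general $\rho$ for the orbit identification), so no new work is required; one only has to phrase the orbit-counting cleanly. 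All remaining steps — the size of $L^*/(L^{\sigma+1}L^{\tau-1})$, the exclusion of exactly one coset by admissibility, the count $|K^*\setminus\{1\}|=|K|-2$, and the orbit inequality — are routine.
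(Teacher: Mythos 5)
Your proposal is correct and follows essentially the same route as the paper: count the $(|K|-2)(d-1)$ pairs $(N_{L\colon K}(\eta), \alpha L^{\sigma+1}L^{\tau-1})$, observe via Theorem~\ref{thm:isotop} that isotopy classes are exactly the orbits of the $\Gal(L)$-action of order $m$ on these pairs, and apply the trivial orbit-size bound. The only difference is presentational — you make explicit the orbit-counting step and the "complete invariant" point that the paper leaves implicit in "The factor $1/m$ is derived from the number of choices for the field automorphism $\rho$."
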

\begin{proof}
	This follows again immediately from Theorem~\ref{thm:isotop}: We can choose $N_{L\colon K}(\eta)$ freely in $K\setminus\{0,1\}$, yielding $|K|-2$ choices. Similarly, there are $\gcd(p^k+1,p^l-1,p^m-1)-1$ non-trivial cosets of $L^{\sigma+1}L^{\tau-1}$ (see the proof of Proposition~\ref{prop:admissible1}). The factor $1/m$ is derived from the number of choices for the field automorphism $\rho$ in Theorem~\ref{thm:isotop}.
\end{proof}
 If $m$ is divisible by three, we can choose $\sigma$ as an order $3$ automorphism of $L$, so $K$ has order $p^{m/3}$, and we get at least $\approx p^{m/3}$ many inequivalent semifields.  In particular, the number of non-isotopic semifields of this form of order $p^{2m}$ is not bounded above by a polynomial in $m$.

This in particular shows that the Zhou-Pott semifields make up only a tiny fraction of the semifields $\S_{\sigma,\tau,\alpha,\eta}$. We want to remark that the value of $d=\gcd(p^k+1,p^l-1,p^m-1)$ in Corollary~\ref{cor:count} can be precisely evaluated using Lemma~\ref{lem:gcd}, we chose to keep this form to avoid technical case distinctions.

Our next objective is to show that the $\S_{\sigma,\tau,\alpha,\eta}$ family contains semifields not contained in \emph{any} other known family of semifields. Recall that the nuclei of a semifield are an equivalence invariant, so the first step is to compute the nuclei of our semifields $\S_{\sigma,\tau,\alpha,\eta}$.

\subsection{The nuclei of $\S_{\sigma,\tau,\alpha,\eta}$}
To compute the nuclei, we use the following technique developed in~\cite{MP12}. Denote by $\mathcal{C}$ the set of right-multiplications of a semifield $\S$ with multiplication $\circ$, i.e.
\[\mathcal{C} = \{R_y \colon x \mapsto x \circ y \mid y \in \S\} \subseteq \End_{\F_p}(\S).\]
This set is usually called the \textbf{spread set} of $\S$ because of its close link to the semifield spread $\Sigma$ (see Eq.~\eqref{eq:ABB}).
\begin{theorem}{\cite[Theorem 2.2.]{MP12}} \label{thm:nuclei}
	Let $\S$ be a semifield and $\mathcal{C}$ the corresponding set of right-multiplications and $\mathcal{C}^d$ the set of right-multiplications of the dual semifield $\pi_1(\S)$. 
	\begin{enumerate}
		\item The right nucleus $\mathcal{N}_r(\S)$ is isomorphic to the largest field $F$ contained in $\End_{\F_p}(\S)$ such that $F\mathcal{C}\subseteq \mathcal{C}$. 
		\item The middle nucleus $\mathcal{N}_m(\S)$ is isomorphic to the largest field $F$ contained in $\End_{\F_p}(\S)$ such that $\mathcal{C}F\subseteq \mathcal{C}$. 
		\item The left nucleus $\mathcal{N}_l(\S)$ is isomorphic to the largest field $F$ contained in $\End_{\F_p}(\S)$ such that $F\mathcal{C}^d\subseteq \mathcal{C}^d$. 
	\end{enumerate}
\end{theorem}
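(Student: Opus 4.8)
The plan is to work directly from the definitions of the nuclei together with the identification of a semifield $\S$ with its spread set $\mathcal{C}=\{R_y : y\in \S\}\subseteq \End_{\F_p}(\S)$, where $R_y(x)=x\circ y$. Fix a nonzero identity $\epsilon$ so that $R_\epsilon=\id$; this forces $\id\in\mathcal{C}$, which will be needed to recover an element of a nucleus from a map stabilizing $\mathcal{C}$. First I would recall the basic dictionary: since $L_x$ and $R_y$ are linear and $x\circ y$ is bilinear, the associativity condition defining each nucleus can be rephrased as an identity among the multiplication operators. Concretely, $z\in\mathcal{N}_r(\S)$ means $(x\ast y)\ast z=x\ast(y\ast z)$ for all $x,y$, i.e. $R_z\circ R_y=R_{y\ast z}$ for all $y$; $y\in\mathcal{N}_m(\S)$ means $R_z\circ L_y=L_y\circ R_z$ for all $z$ — equivalently, after unwinding, $R_{y\ast z}=R_z\circ R_y$ read the other way — and $x\in\mathcal{N}_l(\S)$ means $L_x\circ L_y=L_{x\ast y}$, which is most cleanly handled by passing to the dual semifield $\pi_1(\S)$ whose right multiplications are the $L_x$'s, reducing the left-nucleus statement to the right-nucleus statement applied to $\pi_1(\S)$.

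For part (1), the key step is to show that $z\mapsto R_z$ is a field isomorphism from $\mathcal{N}_r(\S)$ onto a field $F\subseteq\End_{\F_p}(\S)$ satisfying $F\mathcal{C}\subseteq\mathcal{C}$, and that $F$ is the \emph{largest} such field. For the forward direction: if $z\in\mathcal{N}_r(\S)$ then $R_z R_y=R_{y\ast z}\in\mathcal{C}$ for every $y$, so $R_z\mathcal{C}\subseteq\mathcal{C}$; one checks $R_z\mapsto R_{z}$ is additive (distributivity), multiplicative ($R_{z_1}R_{z_2}=R_{z_2\ast z_1}$... — here I would be careful about the order, and note that $\mathcal{N}_r$ is commutative as a field so the opposite-order issue is harmless), and injective (if $R_z=0$ then $z=\epsilon\ast z=0$). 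Hence $F_0:=\{R_z:z\in\mathcal{N}_r(\S)\}$ is a field with $F_0\mathcal{C}\subseteq\mathcal{C}$. For maximality: suppose $F\subseteq\End_{\F_p}(\S)$ is any field with $F\mathcal{C}\subseteq\mathcal{C}$. Since $\id\in\mathcal{C}$, for each $A\in F$ we have $A=A\cdot\id\in\mathcal{C}$, so $A=R_{z_A}$ for a unique $z_A$; then for all $y$, $R_{z_A}R_y=A R_y\in\mathcal{C}$ equals $R_w$ for some $w$, and evaluating at $\epsilon$ gives $w=\epsilon\circ w=R_w(\epsilon)=R_{z_A}(R_y(\epsilon))=R_{z_A}(y)=y\ast z_A$; thus $(x\ast y)\ast z_A=R_{z_A}R_y(x)=R_{y\ast z_A}(x)=x\ast(y\ast z_A)$ for all $x,y$, i.e. $z_A\in\mathcal{N}_r(\S)$ and $A=R_{z_A}\in F_0$. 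Hence $F\subseteq F_0$, so $F_0$ is the largest such field and $\mathcal{N}_r(\S)\cong F_0$.

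Part (2) is identical in structure with $\mathcal{C}F\subseteq\mathcal{C}$ replacing $F\mathcal{C}\subseteq\mathcal{C}$: the relevant identity is $R_w R_y=R_{?}$ — precisely, $y\in\mathcal{N}_m(\S)$ iff $R_z\circ R_y$, appropriately interpreted via $(x\ast y)\ast z=x\ast(y\ast z)$ with $x,z$ varying, is again a right multiplication, and the same $\id\in\mathcal{C}$ trick plus evaluation at $\epsilon$ pins down the element and gives both the embedding and maximality. Part (3) then follows by applying part (1) to the dual semifield $\pi_1(\S)$: its right multiplications are $R^{d}_y:x\mapsto x\ast_d y=y\ast x=L_y(x)$, so the right-multiplication set of $\pi_1(\S)$ is the left-multiplication set $\{L_y:y\in\S\}=\mathcal{C}^d$, and $\mathcal{N}_r(\pi_1(\S))=\mathcal{N}_l(\S)$ by the general fact (cited earlier in the paper) that $\pi_1$ swaps left and right nuclei; combining gives $\mathcal{N}_l(\S)\cong$ the largest field $F$ with $F\mathcal{C}^d\subseteq\mathcal{C}^d$.

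The main obstacle — really the only non-bookkeeping point — is keeping the left/right and opposite-order conventions straight: the map $z\mapsto R_z$ on $\mathcal{N}_r$ is a priori an anti-homomorphism in one reading, and one must either invoke that each nucleus is a field (hence commutative, so homomorphism = anti-homomorphism) or set up the bilinear-form/operator bookkeeping so that the orders match from the start. Everything else (additivity from distributivity, injectivity from the existence of $\epsilon$, the maximality argument via $\id\in\mathcal{C}$ and evaluation at $\epsilon$) is routine. I would also remark that this is exactly the argument of \cite[Theorem 2.2.]{MP12}, so in the paper it suffices to cite it; the proof above is included only to make the excerpt self-contained.
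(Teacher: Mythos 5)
The paper does not prove this statement; it is cited verbatim from \cite[Theorem 2.2.]{MP12} and used as a black box, so there is no in-paper proof to compare against. Your reconstruction is correct and is the standard argument: identify $z\mapsto R_z$ restricted to a nucleus as an embedding into $\End_{\F_p}(\S)$ (an anti-homomorphism, harmless since the nuclei of a finite semifield are commutative fields), observe the inclusion $F_0\mathcal{C}\subseteq\mathcal{C}$ (resp.\ $\mathcal{C}F_0\subseteq\mathcal{C}$), and for maximality use $\id=R_\epsilon\in\mathcal{C}$ together with evaluation at $\epsilon$ to show any stabilizing field is of the form $\{R_z\}$ with $z$ in the relevant nucleus; part (3) is then part (1) applied to $\pi_1(\S)$ together with $\mathcal{N}_r(\pi_1(\S))\cong\mathcal{N}_l(\S)$. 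The only place you were vague is the operator identity in part (2): it should read that $y\in\mathcal{N}_m(\S)$ iff $R_z\circ R_y=R_{y\ast z}$ for all $z$, i.e.\ $\mathcal{C}R_y\subseteq\mathcal{C}$, after which the $\id\in\mathcal{C}$ and evaluation-at-$\epsilon$ steps go through exactly as in part (1).
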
 \label{thm:det_nuclei}
With this result, determining the nuclei is reduced to a manageable (yet technical) calculation.
\begin{theorem}
	The nuclei of $\S_{\sigma,\tau,\alpha,\eta}$ are
	\begin{itemize}
		\item $\mathcal{N}_r(\S) \cong \mathcal{N}_l(\S) \cong \F_{p^{\gcd(k,l,m)}}$,
		\item $\mathcal{N}_m(\S) \cong \F_{p^{\gcd(2k,m)}}$.
	\end{itemize}
\end{theorem}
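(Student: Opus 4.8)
The plan is to use Theorem~\ref{thm:nuclei}, so everything reduces to computing which field elements (acting as $\F_p$-linear maps on $L^2$) stabilize the spread set $\mathcal{C}$ under left and right multiplication, and which stabilize the spread set $\mathcal{C}^d$ of the dual. First I would write down $\mathcal{C}$ explicitly from Eq.~\eqref{eq:ZP_final}: for $\mathbf{y}=\left(\begin{smallmatrix}y_0\\y_1\end{smallmatrix}\right)$ the right-multiplication $R_{\mathbf{y}}$ sends $\left(\begin{smallmatrix}x_0\\x_1\end{smallmatrix}\right)$ to $\left(\begin{smallmatrix}\alpha y_0 x_0-\eta\alpha y_0 x_0^\sigma + y_1^\sigma x_1 \cdot(\ast)\\ y_1 x_0^\tau + y_0^\tau x_1\end{smallmatrix}\right)$ — I would be careful to get the roles of $x$ and $y$ right (note in Eq.~\eqref{eq:ZP_final} the first component is $(\sigma,\sigma)$-shaped in the pair $(x_0,y_0)$ and in $(x_1,y_1)$). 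So each element of $\mathcal{C}$ is a $2\times 2$ matrix of $\F_p$-linear maps of $L$ whose subfunctions are $\F_p$-linear combinations of $z\mapsto z$, $z\mapsto z^\sigma$ (in the diagonal-type entries, multiplied by the appropriate $y_i$) and $z\mapsto z$, $z\mapsto z^\tau$ (in the off-diagonal entries). The key structural fact I would extract is that $\mathcal{C}$ is spanned over $\F_p$ by the two ``slices'' corresponding to $y_0$ alone and $y_1$ alone.

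For the right and middle nuclei: I would take $F\subseteq\End_{\F_p}(L^2)$ a field with $F\mathcal{C}\subseteq\mathcal{C}$ (resp.\ $\mathcal{C}F\subseteq\mathcal{C}$). Using that $\mathcal{C}$ contains the identity (the multiplicative identity of the semifield is some fixed $\mathbf{e}$, and $R_{\mathbf{e}}\in\mathcal{C}$ — after the isotopies performed the identity may not be $\left(\begin{smallmatrix}1\\0\end{smallmatrix}\right)$, so I would either track it or, more cleanly, argue directly from the spanning set), any $f\in F$ must be of the form $R_{\mathbf{y}}$ for some $\mathbf{y}$, i.e.\ $F$ embeds into $\mathcal{C}$. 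Then I would impose $R_{\mathbf{y}}R_{\mathbf{z}}\in\mathcal{C}$ for all $\mathbf{z}$ (right nucleus) and compare subfunctions: composing two of these semilinear-type matrices produces exponents $z\mapsto z^{\sigma^2}$, $z\mapsto z^{\sigma\tau}$, $z\mapsto z^{\tau^2}$, etc., and for the product to land back in the span of $\{z, z^\sigma\}\times\{z,z^\tau\}$, the ``new'' exponents must cancel, which forces $\mathbf{y}$ to lie in a small subfield. Tracking these exponent-matching constraints carefully should pin down $\mathcal{N}_r\cong\F_{p^{\gcd(k,l,m)}}$ and $\mathcal{N}_m\cong\F_{p^{\gcd(2k,m)}}$; the appearance of $\gcd(2k,m)$ for the middle nucleus is the signal that the middle-nucleus condition forces $y^{\sigma^2}=y$ type relations (coming from $\sigma$ appearing twice), whereas the right/left nucleus conditions mix $\sigma$ and $\tau$ and hence produce $\gcd(k,l,m)$.

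For the left nucleus I would first compute the dual $\pi_1(\S)$ by swapping the arguments in Eq.~\eqref{eq:ZP_final}, write down $\mathcal{C}^d$ analogously, and repeat the same subfunction-comparison argument; by the $\Sym(3)$-symmetry of the Knuth orbit one expects $\mathcal{N}_l(\S)\cong\mathcal{N}_r(\pi_1(\S))$, and the computation should again yield $\F_{p^{\gcd(k,l,m)}}$ — indeed a mild symmetry in $\sigma\leftrightarrow\tau$ of the dual multiplication makes this almost the same calculation. I expect the main obstacle to be purely bookkeeping: keeping straight which subfunction of the $2\times 2$ matrix carries which monomial in which variable, and correctly accounting for the shifts introduced by the isotopies leading to Eq.~\eqref{eq:ZP_final} (in particular that the identity of the semifield is not the standard basis vector, so one should phrase the argument in terms of the $\F_p$-span of $\mathcal{C}$ and the multiplicative closure condition rather than plugging in $\mathbf{e}$). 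A secondary check is to confirm consistency with Lemma~\ref{lem:gcd} and with the nucleus orders of the Zhou–Pott semifields in the special case $N_{L\colon K}(\eta)=-1$, which provides a useful sanity test for the constants $\gcd(k,l,m)$ and $\gcd(2k,m)$.
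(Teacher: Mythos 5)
Your plan is essentially the paper's: apply Theorem~\ref{thm:nuclei}, write elements of $\End_{\F_p}(L^2)$ as $2\times 2$ arrays of subfunctions of $\End_{\F_p}(L)$, impose $A\mathcal{C}\subseteq\mathcal{C}$ (resp.\ $\mathcal{C}A\subseteq\mathcal{C}$, resp.\ the dual condition), and match monomial degrees to force the subfunctions to be scalar multiplications lying in the fixed fields of the relevant automorphisms. Your intuition for why $\gcd(2k,m)$ appears for the middle nucleus ($\sigma$ is applied twice to a ratio $u_i/y_i$, giving a $\sigma^2$-invariance condition) and $\gcd(k,l,m)$ for the left/right nuclei (both $\sigma$- and $\tau$-invariance arise) is exactly what the paper's coefficient comparison yields.

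One genuine gap in the \emph{primary} route you sketch: you want to deduce that any $f\in F$ with $F\mathcal{C}\subseteq\mathcal{C}$ is itself of the form $R_{\mathbf{y}}$ by using $R_{\mathbf{e}}=\id\in\mathcal{C}$. But $\S_{\sigma,\tau,\alpha,\eta}$ as written in Eq.~\eqref{eq:ZP_final} is a pre-semifield \emph{without} a right identity (trying to solve $\mathbf{x}\circ\mathbf{e}=\mathbf{x}$ forces $e_1=0$ and then $e_0=0$), so $\id\notin\mathcal{C}$ and $F$ does not embed into $\mathcal{C}$ this way; passing to the Kaplansky semifield replaces $\mathcal{C}$ by $\mathcal{C}R_{e}^{-1}$, which muddies the explicit formulas. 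Your fallback — ``argue directly from the spanning set,'' i.e.\ take $A$ an arbitrary element of $\End_{\F_p}(L^2)$ with subfunctions $A_1,\dots,A_4$ and compare coefficients without assuming $A\in\mathcal{C}$ — is precisely what the paper does, and it cleanly avoids this issue. You also need (and the paper uses) the standing exclusions $\tau\ne\id$ and $\sigma^2\ne\id$ in the middle-nucleus degree comparison to rule out anti-diagonal subfunctions; worth flagging explicitly when you flesh this out.
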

\begin{proof}
	We apply Theorem~\ref{thm:nuclei}, so let $\mathcal{C} \subseteq \End_{\F_p}(L^2)$ be the set of right-multiplications of $\S_{\sigma,\tau,\alpha,\eta}$. Denote the elements in $\mathcal{C}$ by $R_{y_0,y_1}=\left(\begin{smallmatrix} R_{y_0,y_1}^{(1)} \\R_{y_0,y_1}^{(2)}\end{smallmatrix}\right)$. 
We start with the right nucleus. Let $A \in \End(L^2)$, which we write $A=\left(\begin{smallmatrix} A_1 & A_2 \\A_3 & A_4 \end{smallmatrix}\right)$ with subfunctions $A_i \in \End_{\F_p}(L)$. We check when $A\mathcal{C} \subseteq \mathcal{C}$. This means that for each pair $(y_0,y_1) \in L^2$ there has to be a unique pair $(u_0,u_1)\in L^2$ such that
	\[A_1\circ R_{y_0,y_1}^{(1)}+A_2\circ R_{y_0,y_1}^{(2)}=R_{u_0,u_1}^{(1)}, \text{ and }A_3\circ R_{y_0,y_1}^{(1)}+A_4\circ R_{y_0,y_1}^{(2)}=R_{u_0,u_1}^{(2)}.\]
Let us start with the second condition, which is
\[A_3(x_1y_1^\sigma-\eta x_1^\sigma y_1+\alpha(x_0y_0^\sigma-\eta x_0^\sigma y_0))+ A_4(x_0^\tau y_1+x_1y_0^\tau) = x_0^\tau u_1+x_1u_0^\tau.\]
It is immediate from degree comparisons that $A_3=0$ and $A_4(x)=dx$ for some $d \in L$. The condition then simplifies to $dy_1=u_1$, $dy_0^\tau=u_0^\tau$, which we rewrite as $d=u_1/y_1=(u_0/y_0)^\tau$ if $y_0y_1\neq 0$. 

Let us now look at the first condition. Similar degree considerations immediately yield again $A_2=0$, $A_1=ax$ for $a \in L$ which gives the conditions
\[a=\frac{u_0}{y_0}=\left(\frac{u_0}{y_0}\right)^\sigma = \frac{u_1}{y_1}=\left(\frac{u_1}{y_1}\right)^\sigma. \]
In particular, $a=d \in \F_{p^{\gcd(k,m)}}$ (i.e. the fixed field of $\sigma)$. From the previous condition $d=u_1/y_1=(u_0/y_0)^\tau$ we see that $d$ is also in the fixed field of $\tau$, so $a=d \in \F_{p^{\gcd(k,l,m)}}$. So the mappings $A$ satisfying $A\mathcal{C} \subseteq \mathcal{C}$ are precisely $A=\left(\begin{smallmatrix} m_a & 0 \\0 & m_a \end{smallmatrix}\right)$ for $a \in \F_{p^{\gcd(k,l,m)}}$, proving the claim. \\

Let us continue with the middle nucleus, so we check $\mathcal{C}A \subseteq \mathcal{C}$, which gives the similar condition
	\[R_{y_0,y_1}\begin{pmatrix}
		A_1(x_0)+A_2(x_1) \\
		A_3(x_0)+A_4(x_1)
	\end{pmatrix}=R_{u_0,u_1}\begin{pmatrix}
		x_0 \\
		x_1
	\end{pmatrix}.\]
	Checking the second component gives
	\[\left(A_1(x_0)+A_2(x_1)\right)^\tau y_1+\left(A_3(x_0)+A_4(x_1)\right) y_0^\tau=x_0^\tau u_1+x_1u_0^\tau.\]
	It is then clear (for instance by setting $y_0=0$) that $A_2=0$ or $A_2(x)=ax^{\overline{\tau}}$. Let us consider the first component:
		\begin{align*}
		\left(A_3(x_0)+A_4(x_1)\right) &y_1^\sigma-\eta \left(A_3(x_0)+A_4(x_1)\right)^\sigma y_1 	
		+\alpha \left(\left(A_1(x_0)+A_2(x_1)\right) y_0^\sigma-\eta\left(A_1(x_0)+A_2(x_1)\right)^\sigma y_0\right) \\&=x_1u_1^\sigma-\eta x_1^\sigma u_1+\alpha (x_0u_0^\sigma-\eta x_0^\sigma u_0).
		\end{align*}
		If $A_2(x)=ax^{\overline{\tau}}$ for $a \neq 0$ then (by comparing degrees again) we must have $\{\overline{\tau},\overline{\tau}\sigma\}=\{\id,\sigma\}$, which is equivalent to $\tau=\id$ or $\tau=\sigma$ and $\sigma^2=\id$. But we exclude precisely the cases $\tau=\id$ (cf. discussion after Proposition~\ref{prop:admissible1}) and $\sigma^2=\id$ (Proposition~\ref{prop:halbecase}) in our treatment. We conclude that $A_2=0$. With the same argumentation, we can infer $A_3=0$. For $A_1,A_4$ we get (still just comparing degrees) that $A_1(x)=ax$, $A_4(x)=dx$ with the following conditions (the first four from the first component, the last two from the second):
		\[dy_1^\sigma=u_1^\sigma, d^\sigma y_1=u_1,ay_0^\sigma =u_0^\sigma, a^\sigma y_0=u_0,a^\tau y_1=u_1, dy_0^\tau=u_0^\tau.\]
		For $y_0y_1 \neq 0$ this gives 
		\begin{align*}
				a&=\left(\frac{u_0}{y_0}\right)^\sigma=\left(\frac{u_0}{y_0}\right)^{\overline{\sigma}}=\left(\frac{u_1}{y_1}\right)^\tau \\
				d&=\left(\frac{u_1}{y_1}\right)^\sigma=\left(\frac{u_1}{y_1}\right)^{\overline{\sigma}}=\left(\frac{u_0}{y_0}\right)^\tau. 
		\end{align*}
	This is equivalent to $a,d \in \F_{p^{\gcd(2k,m)}}$, $d=a^{\tau\sigma}$. In particular, the middle nucleus is isomorphic to $\F_{p^{\gcd(2k,m)}}$ as claimed.
	
	The left nucleus is computed the same as the right nucleus, except we switch $\S$ with $\pi_1(\S)$, which is equivalent to considering left-multiplications instead of right-multiplications. The calculation is almost identical to the one for the right nucleus; and the result is that the left nucleus is isomorphic to $\F_{p^{\gcd(k,l,m)}}$ as well.
	\end{proof}

\subsection{$\S_{\sigma,\tau,\alpha,\eta}$ contains new semifields}

Theorem~\ref{thm:det_nuclei} shows that if a semifield is equivalent via isotopy and Knuth orbit to a semifield $\S_{\sigma,\tau,\alpha,\eta}$ then it needs to have two nuclei with coinciding order while the third nucleus can potentially be much larger. It is however also possible for semifields $\S_{\sigma,\tau,\alpha,\eta}$ to have all nuclei to be arbitrarily small, even down to the prime field $\F_p$ (by choosing appropriate field automorphisms). 

This means that many of the known constructions  (e.g. Dickson semifields, the semifields in~\cite{CARDINALI2006940,ebert2009infinite}) cannot contain our family of semifields as they have at least one large nucleus. On the other hand, the Taniguchi semifields have three nuclei that are all isomorphic to each other~\cite[Propositions 3-5]{taniguchisf} and can thus also not contain our family. Similarly, if two nuclei of Albert semifields coincide, then they all do, which follows from the calculations in~\cite{Albert61} by Albert. The Zhou-Pott semifields are (properly) contained in our family of semifields as we have shown in Corollary~\ref{cor:ZP}. The twisted cyclic semifields of size $p^{2m}$ can (among other examples) not contain any semifields with three nuclei of size $(p,p^k,p)$ with $k>1$ (see~\cite[Corollary 2]{sheekey2020new}) and thus also do not contain our family. Other families (like the Ganley semifields~\cite{Ganley}) do not yield semifields of order $p^{2m}$ for all $p,m$ and can also be excluded; and others contain only a few inequivalent semifields of a given order which means they also cannot contain our family by our counting results in Corollary~\ref{cor:count}. We can thus conclude:

\begin{theorem}
	The family $\S_{\sigma,\tau,\alpha,\eta}$ contains semifields that are not equivalent  to any known semifields.
\end{theorem}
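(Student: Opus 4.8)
The plan is to combine the two invariants we have just computed — the nuclei and the (exponential) count of inequivalent members — with a finite catalogue of all known infinite families of semifields of order $p^{2m}$, and argue that no such family can absorb the whole of $\S_{\sigma,\tau,\alpha,\eta}$. Concretely, I would fix a set of parameters for which the family behaves extremally: pick $m$ divisible by a small prime (say $3$), so that $\gcd(k,l,m)$ can be forced to be $1$ while $\gcd(2k,m)$ is also small, giving a semifield with all three nuclei equal to $\F_p$; and, by Corollary~\ref{cor:count}, with $\sigma$ of order $3$ we simultaneously produce $\gtrsim p^{m/3}$ pairwise non-isotopic semifields within this subfamily. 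These two facts are the only structural inputs needed.

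The main body of the argument is then a case-by-case elimination, going through the known families one at a time — exactly the catalogue that appears in Table~\ref{t:1} plus the ones explicitly excluded (Knuth--Kantor, Cohen--Ganley, Ganley, Albert's twisted fields, the finite fields themselves), together with any remaining sporadic constructions of order $p^{2m}$. For each family I would cite its published nucleus structure and/or its classification up to isotopy:
\begin{itemize}
\item Dickson and the constructions of~\cite{CARDINALI2006940,ebert2009infinite}: ruled out because they always have at least one large nucleus, which a generic $\S_{\sigma,\tau,\alpha,\eta}$ with all nuclei equal to $\F_p$ does not.
\item Taniguchi semifields: by~\cite[Propositions 3--5]{taniguchisf} their three nuclei are mutually isomorphic; but Theorem~\ref{thm:det_nuclei} produces members with $\mathcal{N}_m=\F_{p^{\gcd(2k,m)}}$ strictly larger than $\mathcal{N}_l=\mathcal{N}_r=\F_{p^{\gcd(k,l,m)}}$, so this case cannot contain our family either.
\item Albert's twisted fields: the computations of~\cite{Albert61} show that two coinciding nuclei force all three to coincide, again incompatible with the nucleus pattern just mentioned.
\item Twisted cyclic semifields: by~\cite[Corollary 2]{sheekey2020new} they cannot realize nucleus triple $(p,p^k,p)$ with $k>1$, which our family does.
\item Ganley, Cohen--Ganley, Knuth--Kantor: excluded because they do not produce semifields of order $p^{2m}$ for all primes $p$ (they are tied to characteristic $2$ or $3$), whereas the parameter choice above can be made over any $p$.
\item All remaining families containing only $O(m^c)$ inequivalent semifields of a fixed order: excluded by the exponential lower bound from Corollary~\ref{cor:count}, since a family cannot contain more inequivalent semifields than it has.
\item Zhou--Pott: here we do \emph{not} claim the family misses it; by Corollary~\ref{cor:ZP} the Zhou--Pott semifields are exactly the case $N_{L\colon K}(\eta)=-1$, so choosing $\eta$ with $N_{L\colon K}(\eta)\notin\{-1,1\}$ lands us outside Zhou--Pott while still inside $\S_{\sigma,\tau,\alpha,\eta}$.
\end{itemize}
Putting these together: for suitable $(p,m,\sigma,\tau,\alpha,\eta)$ the semifield $\S_{\sigma,\tau,\alpha,\eta}$ has a nucleus configuration and an abundance that no known family can match, so it is equivalent to no known semifield.

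The step I expect to be the real obstacle is not any single calculation but \emph{completeness of the catalogue}: one must be confident that the list of known infinite families of semifields of order $p^{2m}$ is genuinely exhausted, since the theorem is only as strong as that list. The safe way to handle this is to organize the elimination by invariant rather than by name — i.e. argue that any family either (a) forces a large nucleus, or (b) forces all three nuclei isomorphic, or (c) has only polynomially many inequivalent members of a given order, or (d) is restricted to small characteristic — and then observe that our extremal parameter choice defeats each of these four obstructions simultaneously. Beyond that, the only mild technical care needed is to verify that the arithmetic constraints (the admissibility condition from Proposition~\ref{prop:admissible1}, the condition $N_{L\colon K}(\eta)\neq 1$, and $\sigma^2\neq\id$, $\sigma\notin\{\tau,\overline\tau\}$ so that Theorem~\ref{thm:isotop} applies) can all be met for the chosen $(p,m,k,l)$; this is a routine check using Lemma~\ref{lem:gcd}.
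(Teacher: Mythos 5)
Your approach matches the paper's: a case-by-case elimination of the known families in the catalogue, keyed on the nucleus computation in Theorem~\ref{thm:det_nuclei}, the isotopy count in Corollary~\ref{cor:count}, and Corollary~\ref{cor:ZP} to step outside the Zhou--Pott subfamily. The worry you flag about completeness of the catalogue is a real one, and it is shared by the paper, which handles it the same way you propose (excluding remaining families by order/characteristic constraints or by polynomial bounds on the number of inequivalent members).

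However, the single parameter choice you pin down is internally inconsistent. You simultaneously want (i) all three nuclei equal to $\F_p$, (ii) $\sigma$ of order $3$ so that Corollary~\ref{cor:count} yields $\gtrsim p^{m/3}$ inequivalent members, and (iii) a middle nucleus strictly larger than the left/right nuclei, in order to defeat Taniguchi and Albert. These cannot all hold at once. If $\sigma$ has order $3$ then $\gcd(k,m)=m/3$, so by Theorem~\ref{thm:det_nuclei} $\mathcal{N}_m\cong\F_{p^{\gcd(2k,m)}}=\F_{p^{m/3}}$, which is large and contradicts (i); and (i) and (iii) directly contradict each other. The paper avoids this by never committing to a single witness: it argues, family by family, that a given known construction cannot \emph{contain} the whole family $\S_{\sigma,\tau,\alpha,\eta}$, pointing for each one to whichever member of $\S_{\sigma,\tau,\alpha,\eta}$ carries the defeating invariant. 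If you want a single explicit witness as in your proposal, pick a different nucleus profile, e.g.\ $\gcd(k,l,m)=1$ and $\gcd(2k,m)=2$ so that the nucleus triple is $(p,p^2,p)$; this simultaneously defeats the large-nucleus families, the all-nuclei-isomorphic families (Taniguchi, Albert), and the twisted cyclic semifields (via \cite[Corollary 2]{sheekey2020new}), and then $p>3$ together with $N_{L\colon K}(\eta)\notin\{1,-1\}$ dispatches the small-characteristic families and Zhou--Pott, without appealing to the exponential count at all.
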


\section{Conclusion and open problems}
In this work, we unified many constructions of semifields of order $p^{2m}$ and provided a new infinite family of such semifields. Theorem~\ref{thm:sheekey} is one of the main ingredients in the proof of Constructions~\ref{thm:general_structure} and~\ref{thm:general_structure_2}. We, however, only use Theorem~\ref{thm:sheekey} for the parameter $d=2$. The natural questions is:
\begin{problem}
	Is it possible to find constructions of semifields similar to Constructions~\ref{thm:general_structure} and~\ref{thm:general_structure_2} where the right-multiplications are mappings as in Theorem~\ref{thm:sheekeysingular} for $d>2$?
\end{problem}
In the current literature there are only a few constructions of semifields known that produce semifields of order $p^{dm}$ for any $p,m$ and a fixed odd $d>2$; outside of the twisted cyclic fields there are to the knowledge of the author only the constructions~\cite{Bierbrauer10,ZKW}. This might indicate that such constructions are not as easy.

For the parameter $d=2$ it remains also open if there are more admissible mappings outside of the trivial one and the ones in Propositions~\ref{prop:admissible1} and \ref{prop:admissible2} that allow us to construct new, non-iosotopic semifields via out constructions. We conjecture that this is not the case.

We would also like to reiterate Open Problem~\ref{prob} in a more general way: We know that Constructions~\ref{thm:general_structure} and~\ref{thm:general_structure_2} can produce \emph{commutative} semifields (e.g. the Zhou-Pott semifields, the Dickson semifields and the Budaghyan-Helleseth semifields). Since isotopy does not preserve commutativity, we pose the following problem:
\begin{problem}
	Determine which semifields produced by Constructions~\ref{thm:general_structure} and~\ref{thm:general_structure_2} are equivalent to commutative semifields.
\end{problem}
We want to note that a criterion to check when a semifield is isotopic to a commutative one was found by Ganley~\cite[Theorem 4]{ganley1972polarities}, it is however not straightforward to check.

The next open problems concern the connection between semifields and coding theory and finite geometry, respectively. It is possible to embed the twisted cyclic semifields in a family of maximum rank distance (MRD) codes, see~\cite{sheekey2020new}. Here the question is:
\begin{problem}
	Is it possible to find a construction of MRD codes that contains Constructions~\ref{thm:general_structure} and~\ref{thm:general_structure_2}?
\end{problem}

Constructions~\ref{thm:general_structure} and~\ref{thm:general_structure_2} give a unified way of describing many (previously unconnected) semifields, and thus also a unified way to study the corresponding semifield planes:
\begin{problem}
	Do the projective planes coordinatized by semifields constructed via Constructions~\ref{thm:general_structure} and~\ref{thm:general_structure_2} have any unifying characteristic properties?
\end{problem}

\bibliographystyle{acm}

\bibliography{semifields}
\end{document}